\def\enddemo{\end{proof}}
\newtheorem{theorem}{Theorem}[section]
\newtheorem{proposition}{Proposition}[section]
\newtheorem{definition}{Definition}[section]
\newtheorem{corollary}{Corollary}[section]
\newtheorem{lemma}{Lemma}[section]
\newtheorem{example}{Example}[section]
\newtheorem{remark}{Remark}[section]
\begin{document}

\title[A stochastic fixed-point theorem]
{A fixed-point theorem for local operators with applications to stochastic equations}

\author{Arcady Ponosov}
\thanks{Arcady Ponosov, Faculty of Sciences and Technology,
Norwegian University of Life Sciences, P. O. Box 5003, NO-1432 {\AA}s,
Norway, E-mail: arkadi@nmbu.no.
}

\subjclass{47H10, 46N30}%
\keywords{Local operators, stochastic equations, tight sets, weak solutions}%


\begin{abstract}
We study weak and strong solutions of nonlinear non-compact operator equations in abstract spaces of adapted random points. The main result of the paper is similar to Schauder's fixed-point theorem for compact operators. The illustrative examples  explain how this analysis can be applied to stochastic differential equations.
\end{abstract}

\maketitle

\section{Introduction}\label{sec-1}

Nonlinear operators studied in this paper possess \textit{the property of locality}. A mapping between two spaces consisting of continuous functions has this property if the value of the output function at a given point is completely determined by the values of the input function in an arbitrarily small neighbourhood of this point. Typical examples are superposition operators, differentiations and their combinations. However, this definition is no longer valid for spaces of measurable functions, where elements are equivalence classes and not individual functions. In the latter case, the formal definition of locality was suggested by I. V. Shragin in the paper \cite{Shragin}. It says that if two equivalence classes coincide on a set $A$, then their images must coincide on the same set. If we replace the equivalent classes with their representatives, then we have to add the expression "almost everywhere" to this definition.

It can be shown that Shragin's definition covers the above mentioned examples. On the other hand, it also covers stochastic integrals if they can be defined as limits of finite sums and, by this, stochastic operators, which do not directly contain global characteristics of stochastic processes, like expectation, covariance, distributions etc.

This paper develops a fixed-point theory for general local operators defined on spaces of adapted random points in abstract separable Banach spaces. The main result of the paper states, roughly speaking,  that if a local and continuous operator, defined on a special set of adapted random points, maps this set into its tight subset, then it has at least one \textit{weak fixed point}, a random point on an expanded probability space. This fixed-point theorem was first announced in the author's paper \cite{Pon-1}, although without a proof. The main objective of this report is to provide a detailed proof of this result.

Note that this theory is not about a simple special case of local operators given by superpositions $x(\cdot)\mapsto F(\cdot, x(\cdot))$ that are generated by random, a.s. continuous maps $F(\omega, \cdot)$. Combining the theory of compact operators with the technique of measurable selections it is not difficult to develop a fixed-point theory for such operators, but they do not cover most interesting stochastic differential equations. The local operators considered in this report do cover stochastic integrals and equations, and this is demonstrated in a number of examples.

The paper is organized as follows. In Section \ref{sec-Prelim} a simplified version of the main fixed-point theorem, which can be directly used in applications, is formulated. Here we replace abstract Banach spaces by two examples of functional spaces.
The general case is considered in Section \ref{sec-inf-dim-fixed-point}, while Sections \ref{sec-3} and \ref{sec-extensions-operators} contain necessary definitions and auxiliary results, the proofs of which are moved to Appendix B.
An overview of the terminology and the notation can be found in Appendix A. Appendix D contains illustrative examples, some of which are based on the propositions collected and proven in Appendix C.

\section{Local operators and a simplified version of the fixed-point theorem}\label{sec-Prelim}

Let
\begin{equation}\label{eq-probability-space}
\mathcal S=(\Omega , {\mathcal F}, P)
\end{equation}
 be a complete probability space.  The expectation, the integral w.r.t. the measure $P$, is denoted by $E$, and the abbreviation a.s. means almost surely (i.e. almost everywhere) with respect to $P$.

We will use the following notation:
$I_A$ is the indicator of a set $A$;
$R^n$ is the $n$-dimensional Euclidean space with some norm $|.|$; $X$, $Y$ are separable Banach spaces with the norms $\|.\|_X$ and $\|.\|_Y$, respectively; we also put $B_r=\{x\in X: \ \|x\|_{X}\le r\}$.

  For any separable metric space $M$ the set ${\mathcal P}(M, \mathcal S)$ consists of all equivalence classes $[x]$ of $\mathcal F$-measurable functions $x:\Omega
\to M$ (also referred to as random points in $M$). Convergence in probability defines a metrizable topology on ${\mathcal P}(M, \mathcal S)$. In this paper we will use the metric
$$d_M(x,y)=E\min\{\rho(x,y); 1\},$$ where $\rho$ is the distance on $M$. If $M$ is complete, then ${\mathcal P}(M, \mathcal S)$ is complete as well. Any convergent in probability sequence contains an a.s. convergent subsequence. In particular, this implies that the topology on ${\mathcal P}(M, \mathcal S)$ does not depend on the choice of any equivalent distance on $M$. If $M=X$ is a separable Banach space $X$, then the set ${\mathcal P}(X, \mathcal S)$ is a linear metric, but not locally convex, space.

Below we usually disregard the difference between the equivalence classes $[x]$ and their particular representatives $x$ writing (somewhat unprecisely) $x\in {\mathcal P}(M,\mathcal S)$ instead of $[x]\in {\mathcal P}(M,\mathcal S)$.
We will also sometimes write ${\mathcal P}(M)$ instead of ${\mathcal P}(M,\mathcal S)$ if the probability space $\mathcal S$ is fixed and if it does not cause misunderstandings.

Notice that if $V$ is a closed resp. convex subset of $X$, then ${\mathcal P}(V, \mathcal S)$ is a closed resp. convex subset of ${\mathcal P}(X, \mathcal S)$. Bounded subsets $\mathcal A$ of the space ${\mathcal P}(X, \mathcal S)$ can be described as follows: for any $\varepsilon>0$ there is a ball $B_r$ in $X$ such that
$P\{x\notin B_r\}<\varepsilon$
for any $x\in \mathcal A$.

 It is assumed in the definition below that two equivalence classes $[x], [y]\in {\mathcal A}$ coincide on a set $A\subset\Omega$, i.e. $[x]|_A=[y]|_A$, if $x(\omega )=y(\omega )$ {for} almost all $\omega \in A$. Clearly, this definition is independent of the choice of the representatives $x$ and $y$.

\begin{definition}
\label{def-local}
Let $\mathcal A\subset \mathcal P (X, \mathcal S)$.
An operator $h: \ {\mathcal A}\to {\mathcal P}(Y, \mathcal S)$
 is called \underline{local} if
\[
[x]|_A=[y]|_A \ \; \mbox{implies}
\ \; h[x]|_A=h[y]|_A
\]
for any $[x], [y]\in {\mathcal A}$ and $A\subset \Omega$.
\end{definition}

\begin{remark}\label{rem-def-local-operators}
  If the property of locality is only valid for all $A\in \mathcal F$, then it also valid for all $A\subset\Omega$. Indeed, for any representatives $x$ and $y$ of the classes $[x]$ and $[y]$, respectively, the set $B=\{\omega\in\Omega: \ x(\omega)=y(\omega)\}$ belongs to $\mathcal F$ and satisfies $P(A- B)=0$. Hence, the equivalence classes $h[x]$ and $h[y]$ coincide on $B$ and thus on $A$.
\end{remark}

Note that any local operator $h$ can be naturally defined on the set of all representatives of the equivalence classes belonging to   ${\mathcal A}$ if we put $hx$  to be an arbitrary representative of the class $h[x]$. The property of locality becomes then
\[
x(\omega )=y(\omega )\ \;\mbox{for}\ \;\omega \in A\ \;%
\mbox{a.s.} \ \; \mbox{implies}
\ \; hx(\omega )=hy(\omega )\ \;\mbox{for}\ \;\omega \in A\;
\mbox{a.s.} \ \ (\forall A\subset\Omega).
\]

\begin{remark}
  In this report, we only study local operators that are continuous in the topology of convergence in probability. Throughout the paper we will use the abbreviation \textit{LC} for such operators.

\end{remark}

\begin{remark}
The superposition operator $$h_f: \ {\mathcal P}(X, \mathcal S)\to {\mathcal P}(Y, \mathcal S), \ \ \mbox{defined by} \ \ (h_fx)(\omega)=f(\omega, x(\omega)),$$ where $f: \Omega\times X\to Y$ is a random function, is local. Clearly, this operator is well-defined, as $x(\omega)=\tilde x(\omega)$ a.s. implies $(h_fx)(\omega)=(h_f\tilde x)(\omega)$ a.s. If, in addition, the function $f$ is Carath\'{e}odory, i.e. it is measurable in $\omega\in\Omega$ for all $x\in X$ and continuous in $x\in X$ for almost all $\omega\in\Omega$, then $h_f$ is continuous in probability.

The It\^{o} integral is another example of an LC operator. More examples can be found in Appendix D.
\end{remark}

In this paper $T$ usually stands for an arbitrary linearly ordered set containing its maximal element, see Appendix A. A typical example is  ${T}=[a, b]$, and this is assumed in the remaining part of this section. In addition, we suppose that $X=C({T})$ or $X=L^r({T})$ ($1\le r < \infty)$.

Let
\begin{equation}\label{eq-filtration}
({\mathcal F}_t)_{t\in {T}}
\end{equation}
 be a filtration on the probability space (\ref{eq-probability-space}), i. e. a nondecreasing family of
$\sigma$-subalgebras of ${\mathcal F}$,  all $\sigma$-subalgebras being complete w.r.t. $P$, i.e. containing all subsets of zero measure. The probability space (\ref{eq-probability-space}) with a filtration (\ref{eq-filtration}) on it is usually called \textit{a stochastic basis}.

An $\mathcal F\otimes \mbox{Bor} (T)$--measurable stochastic process $\xi(t)=\xi(\omega, t)$, $t\in T$, is called ${\mathcal F}_t$--adapted \cite{Oks} if $\xi(\cdot, t)$ is ${\mathcal F}_t$-measurable for all $t\in {T}$. Given a stochastic basis $\mathcal B$, we denote by $\mathcal Pa(X, \mathcal B)$ the set of all (equivalence classes of) ${\mathcal F}_t$-adapted stochastic processes whose trajectories a.s. belong to the space $X=L^r(T)$ or $C(T)$. Any equivalence class consists in this case of all indistinguishable stochastic processes \cite{Oks}. The inclusion $\mathcal Pa(X, \mathcal B)\subset {\mathcal P}(X, \mathcal S)$ induces a natural topology on the aforementioned space.

Recall that a set ${\mathcal K}\subset {\mathcal P}(X, \mathcal S)$ is called \textit{tight}
 if for any $\epsilon >0$ there exists a compact set $Q\subset X$ such that
$P\{\omega \,:\,x(\omega ) \notin Q\}<\epsilon $ whenever $x\in
{\mathcal K}$.
We say that
an operator $h: \ {\mathcal A}\to {\mathcal P}(X, \mathcal S)$ (${\mathcal A}\subset {\mathcal P}(X, \mathcal S)$)
 is \textit{tight-range} if 1) it maps ${\mathcal A}$ into a tight subset of ${\mathcal P}(x, \mathcal S)$ and 2) it is uniformly continuous on any
tight subset of ${\mathcal A}$. If $h$ only maps bounded subsets of ${\mathcal A}$ into tight subsets, then the operator $h$ is called  \textit{tight}.

This definition generalises the notions of compact and compact-range operators:
if $\Omega $ shrinks into a singleton, then the space ${\mathcal P}(X)$ coincides with $X$ and tight subsets become precompact in $X$. In this case, uniform continuity on compact subsets (and thus on their subsets) follows from continuity.

\begin{definition}\label{def-ext-general}
A stochastic basis $\mathcal B^*=(\Omega^*,{\mathcal F}^*,{\mathcal F}_t^*,P^*)$ is \textit{an expansion}
of the stochastic basis $\mathcal B =(\Omega ,{\mathcal F},{\mathcal F}_t, P)$ if
there exists a $({\mathcal F}^*,{\mathcal F})$- measurable surjection $c:\Omega^*\to \Omega$ such that
\begin{enumerate}
  \item $P^*c^{-1}=P$;
  \item $c^{-1}({\mathcal F}_t)\subset {\mathcal F}_t^* \hskip 0.3 truecm
 ( \forall t).$
\end{enumerate}
\end{definition}
Note that $\mathcal{P}a(X, \mathcal B)$ can be naturally identified with a linear topological subspace of the space $\mathcal{P}a(X, \mathcal B^*)$.

Expansions preserving the martingale property are of key interest in the theory of weak solutions of stochastic differential equations \cite{Jacod-1}. In particular, for the standard Wiener process $W(t)$ on $\mathcal B$ the process $W^*(t)=W(t)\circ c$ remains Wiener on any such expansion. In this paper, we only use a special version of the expansions preserving the martingale property, which we call \textit{Young expansions}. In this case, the disintegration of the probability measure $P^*$ is a Young measure, i.e. the weak limit of random Dirac measures generated by adapted random points, see Definition \ref{def-Young-extension}.

Given an LC operator $h\!\!: \!\mathcal{P}a(X, \mathcal B)\!\to\! {\mathcal P}(X, \mathcal S)$ and an expansion
$\mathcal B^*\!=(\Omega^*\!,{\mathcal F}^*\!,{\mathcal F}_t^*\!, P^*\!)$ of the given stochastic basis $\mathcal B =(\Omega ,{\mathcal F},{\mathcal F}_t, P)$, we say that an LC operator $h^*$ defined on $\mathcal{P}a(X, \mathcal B^*)$ is \textit{an extension} of the operator $h$ if the restriction of $h^*$ to $\mathcal{P}a(X, \mathcal B)$ coincides with $h$. Only extensions preserving locality and continuity are studied in this paper.

If an LC operator $h$ admits an LC extension, then this extension is unique, see Theorem \ref{prop-unique-local-extension}. Existence of LC extensions is a more delicate issue, see Subsection \ref{sec-LC-extensions}.

Let $h: \ \mathcal{P}a(X, \mathcal B)\to {\mathcal P}(X)$, where $X=C({T})$ or $X=L^r({T})$ ($1\le r < \infty$),  be an LC operator. If there exists a Young expansion
$\mathcal B^*=(\Omega^*,{\mathcal F}^*,{\mathcal F}_t^*,P^*)$ of the stochastic basis $\mathcal B =(\Omega ,{\mathcal F},{\mathcal F}_t, P)$ and $x^*\in \mathcal{P}a(X, \mathcal B^*)$ such that $h^*x^*=x^*$ $P^*$-a.s., then $x^*$ will be called a weak fixed point of $h$.

The following fixed-point theorem is a particular case of the main results presented in Section \ref{sec-inf-dim-fixed-point}. This version is formulated explicitly, because it may be directly useful in many applications.
\begin{theorem}\label{th-fixed-point}
  Let $X=C({T})$ or $X=L^r({T})$ ($1\le r < \infty$) and $h: \mathcal{P}a (X, \mathcal B)\to \mathcal{P}a (X, \mathcal B)$ be a local and tight-range operator. Then $h$ has at least one weak fixed point $x^*\in \mathcal{P}a (X, \mathcal B^*)$ for some Young expansion $\mathcal B^*$ of the stochastic basis $\mathcal B$.

  If the operator $h$ has at most one weak fixed point for any Young expansion $\mathcal B^*$ of $\mathcal B$, then each
    weak fixed point will be equivalent to a unique strong, i.e. belonging to the space $\mathcal{P}a (X, \mathcal B)$, solution of the equation $hx=x$.
 \end{theorem}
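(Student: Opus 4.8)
\emph{Strategy.} The chief difficulty is that $\mathcal P(X,\mathcal S)$ is not locally convex, so Schauder's theorem cannot be applied to $h$ directly. The plan is to \emph{relax} the equation $x=hx$ by passing from adapted random points to adapted Young measures: the random Dirac embedding $x\mapsto(\delta_{x(\omega)})_{\omega}$ identifies $\mathcal Pa(X,\mathcal B)$ with the Dirac points of a set of measure-valued maps on which the narrow topology \emph{is} induced by a locally convex structure and on which tightness yields compactness. A fixed Young measure of the relaxed operator is exactly a weak fixed point, realised on a Young expansion in the sense of Definition \ref{def-Young-extension}.

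\emph{Existence.} I would proceed in four steps. First, using that $h$ is tight-range, fix a tight set $\mathcal K\supset h(\mathcal Pa(X,\mathcal B))$ and let $\mathcal Y$ be the set of adapted Young measures $\nu=(\nu_\omega)_\omega$ subordinate to $\mathcal K$, i.e. such that for each $\epsilon>0$ there is a compact $Q\subset X$ with $\int_\Omega\nu_\omega(X\setminus Q)\,dP<\epsilon$. This $\mathcal Y$ is convex and, by a Prokhorov-type argument, narrowly compact. Second, I would extend $h$ to an operator $h^*\colon\mathcal Y\to\mathcal Y$. Here both tight-range conditions enter: locality lets $h$ act fibre-wise, so that $h^*$ is well defined on measure-valued maps and its range stays subordinate to $\mathcal K$ (whence $h^*(\mathcal Y)\subset\mathcal Y$), while uniform continuity on tight subsets upgrades to narrow continuity of $h^*$. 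This is the LC extension whose uniqueness is guaranteed by Theorem \ref{prop-unique-local-extension}. Third, Schauder--Tychonoff applied to the continuous self-map $h^*$ of the convex compact set $\mathcal Y$ produces a fixed Young measure $\nu^*=h^*\nu^*$. Fourth, disintegrating $\nu^*$ realises it on a Young expansion $\mathcal B^*$ as a point $x^*\in\mathcal Pa(X,\mathcal B^*)$ with $h^*x^*=x^*$, the desired weak fixed point.

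\emph{Main obstacle.} The delicate step is the second one: making precise the fibre-wise extension $h^*$ of the nonlinear local operator $h$ to all of $\mathcal Y$ (not merely to its Dirac points), deriving its narrow continuity from the uniform continuity of $h$ on tight sets, and checking the invariance $h^*(\mathcal Y)\subset\mathcal Y$. Because $h^*$ is genuinely nonlinear, invariance cannot follow from affineness; it must be read off the tightness constraint, which is preserved pointwise in $\omega$ since every value of $h$ lies in $\mathcal K$. This is exactly the material deferred to Sections \ref{sec-3} and \ref{sec-extensions-operators}.

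\emph{From weak to strong.} For the second assertion I would run a Yamada--Watanabe argument. Assuming at most one weak fixed point on each Young expansion, I would form a Young expansion of $\mathcal B$ carrying two copies $x_1^*,x_2^*$ of the weak fixed point built over extra randomness that is conditionally independent given $\mathcal F$; the uniqueness hypothesis, applied on that single expansion, forces $x_1^*=x_2^*$ $P^*$-a.s. Conditional independence together with this coincidence shows that the common value is measurable with respect to the original basis, hence adapted on $\mathcal B$ itself, so $x^*$ is (equivalent to) a strong solution $x\in\mathcal Pa(X,\mathcal B)$ of $hx=x$. Uniqueness of this strong solution is then immediate, since any strong solution is in particular a weak fixed point on the trivial expansion. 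The point requiring care is that the joint, conditionally independent construction stays within the class of Young expansions, so that the hypothesis is legitimately applicable.
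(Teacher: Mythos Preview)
Your outline for the \emph{weak-to-strong} step is essentially the paper's own argument: form a Young expansion $\mathcal B^{**}$ hosting two conditionally independent copies of $x^*$, use uniqueness on $\mathcal B^{**}$ to force the copies to coincide $P^{**}$-a.s., deduce that the disintegration $P^*_\omega$ is Dirac, and recover a strong fixed point. This part is fine, and your caveat about checking that the joint construction stays within the class of Young expansions is exactly the point the paper addresses (Example \ref{ex-iterated-stoch-bases}).

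The existence argument, however, has a genuine gap. You propose to extend $h$ to an operator $h^*\colon\mathcal Y\to\mathcal Y$ on adapted Young measures and apply Schauder--Tychonoff there, but the extension machinery of the paper (Theorems \ref{prop-unique-local-extension} and \ref{th-extension-operators-general}) does something different: given a \emph{fixed} Young expansion $\mathcal B^*$, it extends $h$ to an operator on \emph{random points} $\mathcal Pa(X,\mathcal B^*)\to\mathcal P(X,\mathcal S^*)$, not to an operator on measures. There is no canonical way to let a nonlinear local operator such as the It\^o integral act ``fibre-wise'' on a probability measure $\nu_\omega$ on $X$; $h$ is defined on functions $\Omega\to X$, not on $X$ itself. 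Even if you interpret $h^*\nu$ as the conditional law of $h^*_\nu x^*$ on the expansion $\mathcal B^*_\nu$ determined by $\nu$, a fixed point $h^*\nu=\nu$ only says that $h^*_\nu x^*$ and $x^*$ have the same conditional law given $\mathcal F$, which is strictly weaker than the a.s.\ equality $h^*x^*=x^*$ required for a weak fixed point. You would also need every $\nu\in\mathcal Y$ to define a Young expansion in the sense of Definition \ref{def-Young-extension} (a narrow limit of adapted Dirac measures), and to establish narrow continuity of $\nu\mapsto h^*\nu$; neither is immediate.

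The paper avoids all of this by taking a different route: it uses Property $(\Pi)$ to produce finite-dimensional Volterra projections $\pi_n$, truncates $h$ to LC operators $h_n$ on finite-dimensional adapted spaces, and applies a \emph{finite-dimensional} fixed-point theorem (Theorem \ref{th-finite-dim}, proved via the Nemytskii representation and Brouwer with a measurable-selection argument) to obtain genuine approximate fixed points $\alpha_n\in\mathcal Pa(X,\mathcal B)$ with $\|h\alpha_n-\alpha_n\|_X\to 0$ in probability. Tightness of $\{\alpha_n\}$ then gives a narrow limit Young measure, which defines the expansion $\mathcal B^*$, and the canonical point $x^*(\omega,x)=x$ is shown to satisfy $h^*x^*=x^*$ by a direct estimate using the uniform-continuity characterization of Theorem \ref{th-uniform-cont}. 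So the Young measure arises as a \emph{limit of approximate solutions}, not as a fixed point of a relaxed operator; no Schauder--Tychonoff is invoked at the infinite-dimensional level.
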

 \begin{proof}
   See Corollary \ref{cor-fixed-point-general}.
 \end{proof}

\section{Some properties of local operators in the spaces of adapted random points}\label{sec-3}
Starting with this section we assume that $X$ is an arbitrary separable Banach space and $T$ is an abstract set of indices. One of the aims is to define adapted random points in $X$ with respect to stochastic bases over $T$.

\subsection{Adapted random points in abstract Banach spaces}\label{sec-adapted-points}

Let $T$ be a linearly ordered set containing a maximal element $b.$ Line intervals $T=[a,b]$ serve as examples of such sets. We are also given a complete probability space (\ref{eq-probability-space}) and
a filtration (\ref{eq-filtration}) on it, i.e. a nondecreasing family
of complete $\sigma $-subalgebras ${\mathcal F}_t\subset {\mathcal F}$ ($t\in T$) indexed by elements of the set $T$.
The quadruple
\begin{equation}\label{eq-stoch-basis}
\mathcal B =(\Omega , {\mathcal F}, ({\mathcal F}_t)_{t\in {T}},P)
\end{equation}
 is addressed as a stochastic basis on the probability space (\ref{eq-probability-space}) over the set $T$.

\begin{definition}\label{def-projective-system}
By a projective system of linear topological spaces over $T$ we understand a triple $\mathcal X=(X_t, p^{ut}, {T})$, where $X_t$ are linear topological spaces ($t\in{T}$) and $p^{ut}:\ X_t\to X_u$ ($t, u\in{T}, \ t\ge u$) are linear continuous surjective maps satisfying the property $$p^{vu}\circ p^{ut}=p^{vt} \ \ \mbox{for all} \ \ t,u,v\in{T}, \ t\ge u\ge v.$$
\end{definition}
\begin{remark}\label{rem-projective-systems}
The following complements Definition  \ref{def-projective-system}:
\begin{enumerate}
\item Projective systems are also known as inverse systems in the literature; the maps $p^{ut}$ are usually called bonding maps.
  \item The definition implies that $p^{tt}$ are the identity maps on the respective spaces $X_t$ for all $t\in {T}.$
  \item In most propositions below we consider projective systems of separable Banach spaces, but in connection with expansions of probability spaces  projective systems of separable Fr\'{e}chet spaces may be necessary, see Example \ref{ex-iterated-stoch-bases}.
  \item Below we systematically use the simplifications $X\equiv X_{b}$ and $p^t\equiv p^{tb}$.
  \item As for any $x\in\mathcal{P}a(\mathcal X, \mathcal B)$, the map $p^{b}x=x$ must be $\mathcal F_b$-measurable, we can always assume, if necessary, that $\mathcal F=\mathcal F_b.$
\end{enumerate}
\end{remark}

An important example of a projective system is described in
\begin{definition}\label{def-Euclidean}
If  $T= T_m\equiv \{0,...,m\}$, $X_t\equiv E_i$ ($t=i\in T_m,$ $\mbox{dim}\, E_i=i$) are linear subspaces of the $m$-dimensional Euclidean space $E=E_m$, $E_j\subset E_i$ ($j\le i$) and $p^{ut}\equiv p^{ji}$ are the orthogonal projections of $E_{i}$ onto $E_{j}$, then the projective system $\mathcal E = (E_i, p^{ji}, T_m) $ will be addressed as \underline{a Euclidean projective system}.
\end{definition}
 The functional spaces like $L^r(T)$ and $C(T)$  give rise to natural projective systems over the line intervals $T$, see Subsection \ref{sec_ex_proj_systems} in Appendix D.

\begin{definition}\label{def adapted points}
Let $\mathcal B=(\Omega , {\mathcal F}, ({\mathcal F}_t)_{t\in {T}},P)$ be a stochastic basis and $\mathcal X=(X_t, p^{ut}, {T})$ be a projective system of separable Banach spaces.
  A random point $x\in\mathcal P(X, \mathcal S)$ ($X= X_b$) is called \underline{adapted} with respect to $\mathcal B$ and   $\mathcal X$ if $p^t(x)\equiv p^{tb}(x): \Omega\to X_t$ is $\mathcal{F}_t$-measurable for all $t\in{T}$.
  \end{definition}

\begin{definition}\label{def-space_Pa}
Let $\mathcal B$ be a stochastic basis and $\mathcal X$ be a projective system of separable Banach spaces.
The linear topological subspace of $\mathcal P(X, \mathcal S)$ consisting of all (equivalence classes of) adapted random points with respect to $\mathcal B$ and $\mathcal X$ will be denoted by $\underline{\mathcal{P}a(\mathcal X, \mathcal B).}$ If $\mathcal X$ or/and $\mathcal B$ are fixed, then the notation $\mathcal{P}a(\mathcal X, \mathcal B)$ may be simplified to $\mathcal{P}a(X, \mathcal B)$ or $\mathcal{P}a(X)$.
\end{definition}

This notation is easy to see to be consistent with the one used in Section \ref{sec-Prelim} for the case of the spaces $C[a,b]$ and $L^r[a,b]$.

\begin{example}\label{ex-trivial-stoch-basis}
  If $\mathcal F_t=\mathcal F$ for all $t\in T$, then $\mathcal Pa(\mathcal X, \mathcal B)=\mathcal P(X, \mathcal S),$ so that all statements proven for the spaces of adapted random points are automatically true for the spaces of all random points. However, the converse statements are in many cases not true. For instance, the representation theorem \ref{th-Nemytskii} for LC operators is only valid for $\mathcal P(X, \mathcal S),$ but not necessarily for $\mathcal Pa(\mathcal X, \mathcal B)$.
\end{example}

\subsection{Uniform continuity and tightness of local operators}\label{sec-uniform-continuity}

Let $X$ and $Y$ be separable Banach spaces.
The canonical uniformity \cite[p. 12-19]{Manfred} on the associated linear topological spaces of random points is understood in agreement with the topologies and the linear operations on these spaces. In particular, we have the following definition of uniform continuity of an operator $h: \mathcal A\to \mathcal P(Y, \mathcal S)$, $\mathcal A\subset \mathcal P(X, \mathcal S)$: \\
{\it For any $\varepsilon>0$, $\sigma>0$ there exist $\delta>0$, $\rho>0$ such that
$$
  P\{\|x_1-x_2\|_X\ge\rho\}<\delta \ (x_1,x_2 \in \mathcal A) \ \
  \Rightarrow \ \ P\{\|h(x_1)-h(x_2)\|_Y\ge\sigma\}<\varepsilon.
$$}
The translation invariant metric $d_X(x_1,x_2)=E\min \{\|x_1-x_2\|_X; 1\} $ gives rise to the same canonical uniformity on $P(X, \mathcal S)\times P(X, \mathcal S)$, as it is generated by translations of the same set of neighborhoods of the origin in $P(X, \mathcal S)$. This applies, of course, to the space $\mathcal P (Y, \mathcal S)$ as well.
Therefore the property of uniform continuity can be rewritten as
\\
{\it For any $\varepsilon>0$ there exists $\delta>0$ such that
$$
  d_X(x_1,x_2)<\delta \ (x_1,x_2 \in \mathcal A) \ \
  \Rightarrow \  \ d_Y(h(x_1),h(x_2))<\varepsilon.
$$}
By technical reasons it may be convenient to combine these two definitions: \\
{\it For any $\varepsilon>0$  there exist $\rho>0$, $\delta>0$ such that
$$
  P\{\|x_1-x_2\|_X\ge\rho\}<\delta \ (x_1,x_2 \in \mathcal A)
  \ \
  \Rightarrow \  \ d_Y(h(x_1),h(x_2))<\varepsilon.
  $$}
In the next definition we generalize the classical notion of a Volterra operator as the one "only depending on the past": $(\phi u)(s)=(\phi v)(s)$ ($a\le s\le t$) if $u(s)=v(s)$ ($a\le s\le t$) for any $a\le t \le b$. The operator here acts on functions defined on the line interval $T=[a,b]$. In the case of an arbitrary linearly ordered $T$ this definition can be extended in  the following manner:
\begin{definition}\label{def-Volterra}
  Let  $\mathcal L=(L_t, l^{ut}, {T})$ be a projective system of separable Fr\'{e}chet (in particular, Banach) spaces. We call an operator $\phi: L\to L$ ($L=L_b$) \underline{a generalized Volterra operator (map)} with respect to $\mathcal L$ if it generates a family of continuous operators $\phi^t: L_t\to L_t$ ($t\in {T}$) satisfying the properties $\phi=\phi^b$ and $l^{ut}\circ \phi^t=\phi^u\circ l^{ut}$ for all $t, u\in{T}, t\ge u$.
\end{definition}

\begin{remark}\label{rem-Volterra}
The superposition operators generated by Volterra maps transform adapted random points to adapted random points. Indeed, if for $x\in \mathcal P(L, \mathcal S)$ the random point $l^{tb}(x)$ in $L_t$ is $\mathcal F_t$-measurable, then $l^{tb}(\phi x)=\phi^t(l^{tb}x)$ will be $\mathcal F_t$-measurable as well due to continuity of $\phi^t.$ This observation is important for our analysis, where the superpositions generated by finite dimensional Volterra maps are used to approximate LC operators defined on the spaces of adapted random points: it is essential that the domain and the range of the operators are invariant under approximations.
\end{remark}
This remark explains
 \begin{definition}\label{def-property-Pi}
 The projective system $\mathcal L=(L_t, l^{ut}, {T})$ of separable Fr\'{e}chet (in particular, Banach) spaces satisfies {Property $(\Pi)$} if  there exists a sequence $\pi_n: L\to L \ (L=L_b)$ of linear, continuous and finite dimensional generalized Volterra maps, which strongly converges to the identity map in $L$ as $n\to\infty.$
 \end{definition}
The property described in the definition is satisfied for most linear functional spaces used in applications, for instance, for $L^r[a,b]$ and $C[a,b]$, as it is shown in Example \ref{ex-Pi-for-C}.

The next definition is a reminder.
\begin{definition}\label{def-tight-set}
A set ${\mathcal K}\subset {\mathcal P}(X, \mathcal F)$ is called \underline{tight}
 if for any $\epsilon >0$ there exists a compact set $Q\subset X$ such that
$P\{\omega \,:\,x(\omega ) \notin Q\}<\epsilon $ whenever $x\in
{\mathcal K}$.
\end{definition}

\begin{remark}\label{rem_def_tight_sets}
  An equivalent, and sometimes more convenient, description of tightness says that $\mathcal K$ is tight if and only if for any $\sigma>0$, $\varepsilon>0$ there exists a compact set $Q\subset X$ such that
$P\{\omega \,:\,x(\omega ) \notin Q_\sigma\}<\epsilon $ whenever $x\in
{\mathcal K}$, where $Q_\sigma$ as the $\sigma$-neighborhood of the set $Q$.
\end{remark}

The theorem below is an important technical result.
\begin{theorem}
\label{th-uniform-cont}
 Suppose that $\mathcal X=(X_t, p^{ut}, {T})$ is a projective system of separable Banach spaces satisfying Property $(\Pi)$, $Y$ is another separable Banach space
 and
$h:\ {\mathcal P}a(\mathcal X, \mathcal B)\to {\mathcal P} (Y, \mathcal S)$ is a local operator.
 Then the
following statements are equivalent:
\begin{enumerate}
  \item $h: \ \mathcal Pa(\mathcal X, \mathcal B)\to \mathcal P(Y, \mathcal S)$ is uniformly continuous on each tight subset $\mathcal K\subset {\mathcal P}a(\mathcal X, \mathcal B)$;
  \item for any compact subset $Q$ of $X$ and any $\varepsilon>0$, there is $\rho>0$ such that $$\Vert u-v \Vert_X\le\rho \ \mbox{a.s. \ \ implies} \ \ \
d_Y(hu,hv)<\varepsilon$$ for all $u,v\in {\mathcal P}a(\mathcal X, \mathcal B)\cap{\mathcal P}(Q, \mathcal S)$; this is e.g. fulfilled if  $h$ is uniformly continuous on any subset ${\mathcal P}a(\mathcal X, \mathcal B)\cap{\mathcal P}(Q, \mathcal S)$, where $Q\subset X$ is an arbitrary
compact;
  \item there exists a function $O(\gamma)>0$ ($\gamma>0$), $\lim\limits_{\gamma\to +0}O(\gamma)=0$ such that for any compact $Q\subset X$ and any $\varepsilon>0$ there is $\delta>0$ satisfying the property:
\begin{equation}\label{eq-uniform-cont-1}
  d_X(x,y)<\delta \ \ \ \mbox{implies} \ \ \
  d_Y(hx,hy)<\varepsilon+O(\gamma)
   \end{equation}
for all $x, y \in \mathcal Pa(\mathcal X, \mathcal B), \ P\{x\notin Q\}<\gamma, \ P\{y\notin Q\}<\gamma.$
\end{enumerate}
\end{theorem}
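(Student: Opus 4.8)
The plan is to show the three statements are equivalent by closing the cycle $(1)\Rightarrow(2)\Rightarrow(3)\Rightarrow(1)$, in which the first and last implications are routine and the middle one carries the whole weight. For $(1)\Rightarrow(2)$ I would note that for a compact $Q\subset X$ the set $\mathcal{P}a(\mathcal X,\mathcal B)\cap\mathcal P(Q,\mathcal S)$ is tight, since every one of its points takes values in $Q$ a.s.; hence by (1) the operator $h$ is uniformly continuous on it in the metric $d_X$. Because $\|u-v\|_X\le\rho$ a.s. forces $d_X(u,v)=E\min\{\|u-v\|_X;1\}\le\rho$, choosing $\rho$ below the modulus of continuity furnished by (1) yields (2), and the identical computation delivers the stronger ``e.g.''-form. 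For $(3)\Rightarrow(1)$, given a tight $\mathcal K$ and $\varepsilon>0$ I would first pick $\gamma$ with $O(\gamma)<\varepsilon/2$, then use tightness to get a compact $Q$ with $P\{x\notin Q\}<\gamma$ for all $x\in\mathcal K$; the $\delta$ produced by (3) for this $Q$ and $\varepsilon/2$ is then a modulus of uniform continuity of $h$ on $\mathcal K$.

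The core is $(2)\Rightarrow(3)$. I would fix a compact $Q'$, numbers $\varepsilon,\gamma>0$, and a compact enlargement $Q\supseteq Q'$ determined by $Q'$; let $\rho$ be the constant attached to $Q$ and $\varepsilon$ by (2). For $x,y$ with $P\{x\notin Q'\}<\gamma$, $P\{y\notin Q'\}<\gamma$ and $d_X(x,y)<\delta$, set $A=\{x\in Q'\}\cap\{y\in Q'\}\cap\{\|x-y\|_X\le\rho\}$. By Chebyshev's inequality $P(\Omega\setminus A)\le 2\gamma+\delta/\min\{\rho,1\}$, so taking $\delta$ small gives $P(\Omega\setminus A)\le 3\gamma$. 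The whole implication then reduces to producing adapted points $\tilde x,\tilde y\in\mathcal{P}a(\mathcal X,\mathcal B)\cap\mathcal P(Q,\mathcal S)$ with $\tilde x=x$, $\tilde y=y$ a.s. on $A$ and $\|\tilde x-\tilde y\|_X\le\rho$ a.s. on all of $\Omega$: granting this, locality yields $h\tilde x=hx$ and $h\tilde y=hy$ a.s. on $A$, whence $d_Y(hx,hy)\le d_Y(h\tilde x,h\tilde y)+P(\Omega\setminus A)<\varepsilon+3\gamma$, the first term being controlled by (2). Setting $O(\gamma):=3\gamma$ finishes the proof.

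Everything thus rests on the construction of $\tilde x,\tilde y$, and this is the one genuinely delicate step. The naive recipe — leave $x,y$ untouched on $A$ and glue them to a common value off $A$ — fails, because $A$ is merely $\mathcal F$-measurable and not $\mathcal F_t$-measurable, so the glued points are not adapted. The remedy I would pursue is to perform the cut \emph{causally}: using the order structure of the projective system, freeze each trajectory at the first ``time'' its norm leaves the range dictated by $Q'$, or at which the running difference of the two trajectories would exceed $\rho$, and afterwards let the two modifications follow a common path. Being governed by a stopping rule this operation preserves adaptedness, leaves $x,y$ unchanged precisely on $A$, and enforces $\|\tilde x-\tilde y\|_X\le\rho$ a.s. Property $(\Pi)$ enters exactly here: the finite-dimensional Volterra maps $\pi_n\to\mathrm{id}$ certify that the stopped trajectories, being uniformly approximable by their finite-dimensional truncations, range over a tight and hence (after closure) compact set $Q$, so that $\tilde x,\tilde y$ genuinely lie in $\mathcal{P}a(\mathcal X,\mathcal B)\cap\mathcal P(Q,\mathcal S)$ and (2) applies. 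The hard part will be the bookkeeping of this causal truncation in an abstract projective system — in particular giving ``the first instant the running difference exceeds $\rho$'' a precise meaning for a general linearly ordered $T$ and verifying its compatibility with the filtration — which I expect to be the main technical obstacle of the whole argument.
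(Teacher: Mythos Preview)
Your treatment of $(1)\Rightarrow(2)$ and $(3)\Rightarrow(1)$ is correct and matches the paper. The disagreement is in $(2)\Rightarrow(3)$, where you have correctly identified both the target (produce adapted $\tilde x,\tilde y$ with values in a fixed compact, coinciding with $x,y$ on $A$, and satisfying $\|\tilde x-\tilde y\|_X\le\rho$ a.s.) and the obstacle (adaptedness forbids the naive gluing), but your proposed resolution is not the one the paper uses and, as stated, does not go through.

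Your ``causal stopping'' idea presupposes that in the abstract projective system one can speak of the first $t\in T$ at which a running norm or running difference crosses a level, and that the resulting frozen trajectories land in a compact set. Neither step is available in general: the bonding maps $p^{ut}$ are merely linear surjections, there is no pointwise evaluation, and the claim that Property~$(\Pi)$ forces the stopped trajectories into a compact is not substantiated (uniform approximability by $\pi_n$ says nothing about the range of a stopping operation you have not yet defined). You yourself flag this as the main technical obstacle, but it is not just bookkeeping --- in the generality of the theorem there is no such stopping construction to book-keep.

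The paper avoids this entirely. It first replaces $x,y$ by $\pi_n x,\pi_n y$ for a single large $n$ chosen so that $d_Y(h(\pi_n x),hx)$ and $d_Y(h(\pi_n y),hy)$ are small (this is where continuity of $h$ is used) and so that $P\{\|\pi_n x-\pi_n y\|_X>\rho\}$ is small. The enlarged compact is $Q=\overline{\mathrm{conv}}\,\bigcup_n\pi_n(Q_0)$, which is how Property~$(\Pi)$ actually enters. Now $(\pi_n x,\pi_n y)$ lives in the finite-dimensional product projective system $\mathcal X^n\times\mathcal X^n$, and the paper invokes Lemma~\ref{lem-projection}: for any compact convex $W\subset\pi_n(X)\times\pi_n(X)$ there is a continuous deterministic projection $\phi_n$ onto $W$ whose superposition operator preserves adaptedness. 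Applying this with $W^n=\{(a,b):a,b\in Q,\ \|a-b\|\le\rho\}$ produces the desired pair $(u,v)=\phi_n(\pi_n x,\pi_n y)$. Locality then gives $hu=h(\pi_n x)$, $hv=h(\pi_n y)$ on the good set, and the estimate closes with $O(\gamma)=3\gamma$. The ``causal'' content you were looking for is hidden inside Lemma~\ref{lem-projection}, whose proof (via Lemma~\ref{lem-isomorphism}) passes to a Euclidean projective system over $T_m=\{0,\dots,m\}$ and builds $\phi_n$ coordinate by coordinate --- a finite, explicit substitute for your stopping time.
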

\begin{proof}
  See Appendix B, Subsection \ref{sec-proof of uniform th}.
\end{proof}

In the case of continuous superposition operators, {Property $(\Pi)$} in Theorem \ref{eq-uniform-cont-1} can be omitted, see Example \ref{ex-uniform-caratheodory} in Appendix D.

The next definition introduced in \cite{Pon-1} generalises the notion of a compact operator.

\begin{definition}\label{def-tight-oper}
Let $X$ and $Y$ be separable Banach space and $h: \ {\mathcal A}\to {\mathcal P}(Y, \mathcal S)$, where ${\mathcal A}\subset {\mathcal P}(X, \mathcal S)$.
\begin{enumerate}
  \item The operator $h$ is called \underline{tight} if 1) it maps bounded subsets of ${\mathcal A}$ into tight subsets of ${\mathcal P}(Y,\mathcal S)$ and 2) it is uniformly continuous on any
tight subset of ${\mathcal A}$.
  \item The operator $h$ is called \underline{tight-range} if 1) it maps ${\mathcal A}$ into a tight subset of ${\mathcal P}(Y,\mathcal S)$ and 2) it is uniformly continuous on any
tight subset of ${\mathcal A}$.
\end{enumerate}
\end{definition}
This definition yields the class of (continuous) compact and compact-range operators if $\Omega $ is single-pointed. On the other hand, local operators are almost never compact. For instance, it can be proven that $h: \ \mathcal{P}(X, \mathcal S)\to \mathcal{P}(Y, \mathcal S)$ is local and compact if and only if either $P$ assumes finitely many values, or $Y$ contains finitely many points.

For nontrivial examples of tight operators see Subsection \ref{sec_ex_tight} in Appendix D.

\section{Extensions of local operators}\label{sec-extensions-operators}
Extensions of stochastic integrals are, in particular, used in the theory of weak solutions. For example, the operator $(Ju)(s)=\int_{a}^{t}u(s)dW(s)$, defined on $\mathcal{P}a(X, \mathcal B)$, where $X=C({T})$ or $X=L^r({T})$ ($1\le r < \infty$), admits a natural extension $J^*=\int_{a}^{t}u(s)dW^*(s)$ if the expansion  $\mathcal B^*$ of the stochastic basis $\mathcal B$ preserves the martingale property. Here $W(t)$ resp. $W^*(t)$ is the standard Wiener process on the stochastic basis $\mathcal B$ resp. on its expansion $\mathcal B^*$.

For general LC operators one needs to develop a martingale-independent technique.
In this section, we provide sufficient conditions for existence of an LC extension of an LC operator defined on a space of abstract adapted random points.

\subsection{Expansions of stochastic bases}\label{sec-extensions-bases}

  Expansions/changes of the underlying probability space are e.g. used if this space is not rich enough to host solutions of stochastic equations. Not all expansions preserve basic properties of stochastic integrals, and hence a fortiori we cannot hope that general LC operators can be extended to an arbitrary expansion of the original probability space. In this paper we use what we call \textit{Young expansions}, which is sufficient for our purposes.

Let $\Omega^*=\Omega\times Z$, $Z$ be a Polish (e.g. separable Banach) space and $\mu$ be a measure on $\mathcal F\otimes \mbox{Bor}(Z)$, whose marginal coincides with $P$: $\mu (A\times Z)=PA$ for any $A\in \mathcal F$.

\textit{The disintegration} of the measure $\mu$ \cite[p. 19]{Crauel} is a random measure $\mu_\omega$ on $\mbox{Bor}(Z)$  for almost all $\omega\in\Omega$ such that
$$
\int_{\Omega\times Z}g(\omega, z)d\mu(\omega, z)=\int_\Omega\int_Z g(\omega, z)d\mu_\omega(z)dP(\omega)
$$
holds for every bounded measurable function $g: \ \Omega\times Z\to R.$ The corresponding differential form reads as $d\mu(\omega, z)=d\mu_\omega(z)dP(\omega)$, which can be conveniently abbreviated to $d\mu=d\mu_\omega dP$.

\textit{The narrow topology} on the set $Pr_\omega(Z)$ of all random measures on $\mbox{Bor}(Z)$ with the marginal $P$ is generated by the maps
$$
\mu\mapsto\mu(f)=E\int_Z fd\mu\equiv E\int_Zf(\omega, z)d\mu_\omega(z),
$$
where $f:\Omega \times Z\to R$ is an arbitrary bounded Carath\'{e}odory function \cite[p. 25]{Crauel}.

A neighborhood $U_{f_1,...,f_m, \delta}(\mu)$ of  $\mu\in Pr_\omega(Z)$ in the narrow topology consists of all $\nu\in Pr_\omega(Z)$ such that
$$
|E\int_Z f_id\mu-E\int_Z f_id\nu|<\delta \ \ \ (i=1,...,m).
$$
Here $f_i$ are bounded Carath\'{e}odory functions and $\delta >0.$

Definition \ref{def-ext-general} of an expansion of a stochastic basis is too general for our purposes. Therefore we introduce the notion of a Young expansion starting with probability spaces.
\begin{definition}\label{def-extension}
\underline{A Young expansion} $\mathcal S^*=(\Omega^*,{\mathcal F}^*,P^*)$
of the probability space $\mathcal S =(\Omega ,{\mathcal F}, P)$ satisfies the properties:
\begin{enumerate}
  \item $\Omega^*=\Omega\times Z$, $Z$ being a Polish (e.g. separable Fr\'{e}chet or Banach) space;
  \item $P^*$ is a probability measure on $\mathcal F\otimes \mbox{Bor}(Z)$ with the marginal $P$;
  \item the disintegration $P^*_\omega$ of $P^*$ is the limit (in the narrow topology) of a sequence of random Dirac measures $\{\delta_{\alpha_n(\omega)}\}$, where
$\alpha_n\in\mathcal P(Z, \mathcal S)$;
  \item $\mathcal F^*$ is the $P^*$-completion of the $\sigma$-algebra $\mathcal F\otimes \mbox{Bor}(Z)$.
\end{enumerate}
\end{definition}

\begin{remark}\label{rem_narrow_conv}
      Property (3) in Definition \ref{def-extension} can be rewritten in terms of the measure $P^*$ and the measures $P\alpha_n^{-1}$, defined by
\begin{equation}\label{eq_measures P_alpha_-1}
  dP\alpha_n^{-1}(\omega,z)\equiv d\delta_{\alpha_n(\omega)}(z)dP(\omega),
\end{equation}
in the following way:
\begin{equation}\label{eq-narrow-convergence}
\begin{array}{c}
  E^*g\equiv\int_{\Omega^*}g(\omega, z)dP^*(\omega, z)=\lim\limits_{n\to\infty}\int_{\Omega^*}g(\omega, z)dP\alpha_n^{-1}(\omega, z) \\
  =\lim\limits_{n\to\infty}\int_{\Omega}g(\omega, \alpha_n(\omega))dP(w)=\lim\limits_{n\to\infty}E(g\circ\alpha_n)
\end{array}
  \end{equation}
for any bounded Carath\'{e}odory function $g: \Omega\times Z\to R$.
Strictly speaking, we should have written $P (\mbox{Gr}\, \alpha_n)^{-1}$ and not  $P\alpha_n^{-1}$, but we will keep the latter notation for the sake of simplicity.
\end{remark}

In the next definition we replace Polish spaces used in Definition \ref{def-extension} by separable Fr\'{e}chet spaces, because we want to construct expansions utilizing projective systems from Definition \ref{def-projective-system}.
\begin{definition}\label{def-Young-extension}
Suppose that $\mathcal Z=(Z_t, q^{ut}, {T})$ is a projective system of separable Fr\'{e}chet spaces.
\underline{A Young expansion} $\mathcal B^*=(\Omega^*,{\mathcal F}^*,({\mathcal F}_t^*)_{t\in {T}},P^*)$ of the stochastic basis $\\ \mathcal B =(\Omega ,{\mathcal F},({\mathcal F}_t)_{t\in {T}}, P)$, generated by  $\mathcal Z$, satisfies the following properties:
\begin{enumerate}
  \item $\Omega^*=\Omega\times Z$, where $Z=Z_b$;
  \item $P^*_\omega$ is the limit (in the narrow topology) of a sequence of random Dirac measures $\{\delta_{\alpha_n(\omega)}\}$, where
$\alpha_n\in\mathcal Pa(Z,\mathcal B)$;
  \item $\mathcal F^*$ is the $P^*$-completion of the $\sigma$-algebra $\mathcal F\otimes \mbox{Bor}\,(Z)$;
  \item $\mathcal F^*_t$ is the $P^*$-completion of the $\sigma$-algebra $\mathcal F_t\otimes (q^{t})^{-1}(\mbox{Bor}\,(Z_t))$ for any $t\in T.$
    \end{enumerate}
In particular, the probability space
$(\Omega^*,{\mathcal F}^*,P^*)$ is a Young expansion of the probability space $(\Omega ,{\mathcal F}, P)$.
\end{definition}
Some examples of Young expansions can be found in Subsection \ref{sec_ex_Young_ext}.

\begin{remark}\label{rem-Young-extension}
The mapping $c: \Omega^*\to\Omega$ from Definition \ref{def-ext-general} is defined as $c(\omega, z)=\omega$ in Definitions  \ref{def-extension} and \ref{def-Young-extension}. Clearly, $c$ is $(\mathcal F^*, \mathcal F)$-measurable resp. $(\mathcal F_t^*, \mathcal F_t)$-measurable for any $t\in T$.
\end{remark}

Given $S\in \mathcal F\otimes \mbox{Bor}\,(Z)$ we put $S(\omega)=\{z\in Z \ : \ (\omega, z)\in S$,
$\mbox{cl}\, S=\{(\omega, z): \ z\in \mbox{cl}\, S(\omega)\}$, where $\mbox{cl}(B)$ is the closure of a set $B\subset Z$, and $\partial S\equiv \mbox{cl}\, S\cap \mbox{cl}\, (\Omega^* - S)$ is the random boundary of the random set $S$. It can be shown (see e.g. \cite[p. 10]{Crauel}) that $\mbox{cl}\, S \in \mathcal F\otimes \mbox{Bor}\,(Z)$ if $Z$ is a Polish space. Therefore $\partial S\in \mathcal F\otimes \mbox{Bor}\,(Z)$ as well.

The next two definitions play a key role in the proof of the fixed-point theorem in Section \ref{sec-inf-dim-fixed-point}.

\begin{definition}\label{def-continuity-sets}
  Let $P^*$ be a Young probability measure defined on the $\sigma$-algebra $\mathcal F\otimes \mbox{Bor}\,(Z)$, where $Z$ is a Polish space. A set $S\in \mathcal F\otimes \mbox{Bor}\,(Z)$ is called \underline{a continuity set} of the measure $P^*$ (or simply, a $P^*$-continuity set) if $P^*(\partial B)=0$.
\end{definition}

In the next definition  we assume that $\mathcal B^*=(\Omega^*,{\mathcal F}^*,({\mathcal F}_t^*)_{t\in {T}},P^*)$ is
{a Young expansion} of the stochastic basis $\mathcal B =(\Omega ,{\mathcal F},({\mathcal F}_t)_{t\in {T}}, P)$. The corresponding expansion of the underlying probability space $\mathcal S =(\Omega ,{\mathcal F}, P)$ is denoted by $\mathcal S^*$.

 \begin{definition}\label{def-cont-adapted-points}
  Let $\mathcal X=(X_t, p^{ut}, T)$ be a projective system of separable Banach spaces and $X=X_b$.
\begin{enumerate}
  \item A random point $x\in \mathcal{P}(X, \mathcal S^*)$ is \underline{$P^*$-a.s. continuous} if there exists a subset $A\in\Omega^*$ of zero measure $P^*$ such that the maps $x(\omega, \cdot): \ Z\to X$ are continuous on  $Z(\omega)-A(\omega)$.
  \item An adapted random point $x\in \mathcal{P}a(\mathcal X, \mathcal B^*)$ is \underline{simple} if $x=\bigcup\limits_{i=1}^s\alpha_iI_{A_i}$ for some $\alpha_i\in \mathcal Pa (\mathcal X, \mathcal B)$ and disjunct $P^*$-continuity subsets $A_i\in \mathcal F^*$ ($i=1,..,s$), $\bigcup\limits_{i=1}^sA_i=\Omega^*$.
      \item The set of all simple points will be denoted by $\mathcal{S}a(\mathcal X, \mathcal B^*)$.
\end{enumerate}
 \end{definition}
Clearly, the random points from $\mathcal{S}a(\mathcal X, \mathcal B^*)$ are $P^*$-continuous.

\begin{remark}\label{rem-narrow-topology}
    Note that conditions on the random function $g$ in Remark \ref{rem_narrow_conv} can be relaxed \cite{Crauel}: the equality (\ref{eq-narrow-convergence}) holds, in fact, for any $P^*$-a.s. continuous $g:\Omega\times Z\to R$. In particular, $$P^*A=\lim\limits_{n\to\infty}(P\alpha_n^{-1})(A)=P\{\alpha_n\in A\}$$
  for any $P^*$-continuity subset $A\in \mathcal F^*$.
\end{remark}

The approximation result below is used in the proof of the main results.

\begin{theorem}\label{th-approx-simple-points} Assume that $\mathcal X=(X_t, p^{ut}, {T})$ and $\mathcal Z=(Z_t, q^{ut}, {T})$ are two projective systems of separable Banach and Fr\'{e}chet spaces, respectively, both satisfying Property $(\Pi)$, and $\mathcal B^*=(\Omega^*,{\mathcal F}^*,({\mathcal F}_t^*)_{t\in {T}},P^*)$ is a
{Young expansion} of the stochastic basis $\mathcal B \! =\! (\Omega ,{\mathcal F},({\mathcal F}_t)_{t\in {T}}, P)$ generated by $\mathcal Z$.
Then for any $x, y \in \mathcal{P}a(\mathcal X, \mathcal B^*)$, satisfying $x|_A=y|_A$ $P^*$-a.s  for some $A\in \mathcal F^*$,  there exist $x_n, y_n\in \mathcal{S}a(\mathcal X, \mathcal B^*)$ and $P^*$-continuity subsets $A_n\in \mathcal F^*$, for which

(1) $x_n|_{A_n}=y_n|_{A_n}$ $P^*$-a.s.,

(2) $x_n\to x$, $y_n\to y$ in probability $P^*$ and

(3) $\lim\limits_{n\to\infty}P^*(A_n\triangle A)=0$.
\end{theorem}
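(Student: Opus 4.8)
The plan is to build the two families of simple points by a single, filtration-respecting discretization, reducing first to a finite-dimensional, fiberwise-continuous situation, and then synchronizing the approximants on a continuity-set replacement of $A$.

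First I would reduce the data. Applying Property $(\Pi)$ to $\mathcal X$, let $\pi_k\colon X\to X$ be the finite-dimensional Volterra maps converging strongly to the identity; by Remark \ref{rem-Volterra} the superpositions $\pi_k x,\pi_k y$ stay in $\mathcal Pa(\mathcal X,\mathcal B^*)$, converge to $x,y$ in $P^*$-probability, and still coincide on $A$ (as $\pi_k$ acts pointwise). The images $\pi_k(X)$ carry the induced Euclidean projective structure of Definition \ref{def-Euclidean}, so it suffices to treat points with range in a fixed finite-dimensional space; applying Property $(\Pi)$ to $\mathcal Z$ likewise lets me assume the Young variable is finite-dimensional, so that $P^*$ lives on $\Omega\times\mathbb R^{d}$ and continuity sets are easy to produce. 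A fiberwise regularization in the Young variable, respecting the bonding maps $q^{ut}$, then reduces to the case where the chosen representatives of $x$ and $y$ are $P^*$-a.s. continuous in $z$ (cf. Definition \ref{def-cont-adapted-points}); this is exactly the regularity under which the narrow-topology identity of Remark \ref{rem-narrow-topology} holds and under which the graph evaluations below are well defined.

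The structural engine is that graph evaluations of adapted points are adapted over the original basis: for $\alpha_j\in\mathcal Pa(Z,\mathcal B)$ from Definition \ref{def-Young-extension}, the point $\beta_j:=\bigl(\omega\mapsto x(\omega,\alpha_j(\omega))\bigr)$ lies in $\mathcal Pa(\mathcal X,\mathcal B)$, since adaptedness forces $p^t x$ to depend on $z$ only through $q^t z$, whence $p^t\beta_j=(p^t x)(\cdot,q^t\alpha_j(\cdot))$ is $\mathcal F_t$-measurable. These $\beta_j$ (and the analogous $\gamma_j$ built from $y$) are the admissible values for simple points. Next I would approximate $A$ by a $P^*$-continuity set: since $P^*$-continuity sets form an algebra and the level sets $\{f<c\}$ of a measurable $f$ are continuity sets for all but countably many $c$, one obtains $A_n\in\mathcal F^*$ with $P^*(A_n\triangle A)\to 0$, giving (3). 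Then, proceeding level by level along $T$ with the finite Euclidean structure, I would build a single finite partition of $\Omega^*$ into disjoint continuity sets refining $\{A_n,\Omega^*\setminus A_n\}$, on each cell of which $(\omega,z)$ is close to one graph $\alpha_{j}$, so that fiberwise continuity yields $x(\omega,z)\approx\beta_{j}(\omega)$. Assigning $\beta$-values produces a simple $x_n$ close to $x$, and the analogous assignment produces $y_n$; on the cells contained in $A_n$, where $x=y$ up to the error, the same value is assigned to both, forcing $x_n=y_n$ on $A_n$ and giving (1), while (2) follows from the reductions. A diagonal argument over $k$, the regularization scale, and $n$ yields the asserted sequences.

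The main obstacle is the level-by-level construction of the adapted continuity-set partition together with adapted cell-values. The tension is that accuracy wants the cells small in the Young variable, whereas $\mathcal B^*$-adaptedness of $\sum_i\beta_{j(i)}I_{A_i}$ constrains how the cells may depend on $z$ at each projection level $q^t$: two points with $q^t z_1=q^t z_2$ lying in different cells must receive $t$-projections that agree. Reconciling these is where the Volterra (hence adaptedness-preserving) nature of the maps from Property $(\Pi)$ for both $\mathcal X$ and $\mathcal Z$, the Euclidean reduction, and the narrow-topology description of $P^*$ must be used jointly; it is precisely the finite-dimensional, finitely-indexed setting obtained after the reductions that makes the compatible refinement possible.
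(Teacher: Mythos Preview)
Your outline shares the opening moves with the paper (finite-dimensional reduction via Property~$(\Pi)$ for $\mathcal X$, and the analogous reduction for $\mathcal Z$), but it diverges from the paper at the core construction and leaves real gaps.

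\medskip
\textbf{The missing linearization.} The paper's decisive simplification, which you do not use, is to reduce the two-point statement to a one-point statement: set $z=x-y$, so that $z|_A=0$, and prove the \emph{simplified version} ``for any $z$ with $z|_A=0$ there exist $z_n\in\mathcal Sa$ and continuity sets $B_n$ with $z_n|_{B_n}=0$, $z_n\to z$, $P^*(A\triangle B_n)\to 0$''. From this, density of $\mathcal Sa$ in $\mathcal Pa$ follows (take $A=\emptyset$), and the full theorem is recovered by choosing any $x_n\in\mathcal Sa$ with $x_n\to x$ and putting $y_n=x_n-z_n$; then $x_n-y_n=z_n$ vanishes on $A_n:=B_n$, giving (1) for free. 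Your attempt to synchronize $x_n$ and $y_n$ directly (``on the cells contained in $A_n$ \dots the same value is assigned to both'') does not work as stated: the graph evaluations $\beta_j=x(\cdot,\alpha_j(\cdot))$ and $\gamma_j=y(\cdot,\alpha_j(\cdot))$ need not agree on $A_n$ (only on $A$, and only up to null sets), so forcing equality of the simple approximants on $A_n$ will spoil the convergence of at least one of them.

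\medskip
\textbf{Graph evaluations and fiberwise regularization are the wrong mechanism.} Your $\beta_j(\omega)=x(\omega,\alpha_j(\omega))$ requires a representative of $x$ that is genuinely continuous in $z$ and whose $t$-projections depend on $z$ only through $q^t z$. Since $\mathcal F_t^*$ is merely the $P^*$-\emph{completion} of $\mathcal F_t\otimes(q^t)^{-1}\mathrm{Bor}(Z_t)$, the latter holds only modulo $P^*$-null sets, so $\beta_j$ is not well defined as an element of $\mathcal Pa(\mathcal X,\mathcal B)$ without further work; and the ``fiberwise regularization'' you invoke is not a change of representative but an approximation that must itself preserve the Volterra/adaptedness structure --- none of this is carried out. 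The paper avoids the whole issue: after passing to the Euclidean system $\mathcal E$ it approximates each coordinate $\alpha_i$ of $x$ by a step function $\sum_j a_{ij}I_{\Omega_{ij}\times B_{ij}}$ with \emph{constant} values $a_{ij}\in\mathbb R$, so the cell values are trivially in $\mathcal Pa(\mathcal X,\mathcal B)$ and no continuity in $z$ is ever needed.

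\medskip
\textbf{What actually resolves your ``main obstacle''.} You correctly flag the adapted continuity-set partition as the hard part but do not build it. The paper's route is: (i) in the Euclidean coordinates the adaptedness constraint becomes ``$\alpha_i$ is $\mathcal F_i^*$-measurable'', handled one coordinate at a time; (ii) Property~$(\Pi)$ for $\mathcal Z$ is used to replace the possibly non-Borel $\sigma$-algebras $\bigcap_{t\in T_i}(q^t)^{-1}\mathrm{Bor}(Z_t)$ by finite-dimensional cylinder $\sigma$-algebras, at the cost of an arbitrarily small error; (iii) only then is Lemma~\ref{lem-appr-by-continuity-sets} applied to convert the cells into disjoint $P^*$-continuity sets while keeping track of which cells carry the value $0$ (these form the approximating set $B$). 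Your sketch conflates (ii) and (iii) and does not explain how the level-by-level refinement stays both adapted and a continuity partition; this is exactly where the paper spends most of its effort (its Steps~3--4).
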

\begin{proof}
  See Appendix B, Subsection \ref{sec-appr-simple-points}.
\end{proof}

\subsection{Construction of LC extensions}\label{sec-LC-extensions}

In this subsection we assume that
$\mathcal X$ is a projective system of separable Banach spaces, $Y$ is another separable Banach space, $\mathcal B$ is a stochastic basis and $\mathcal B^*$ is its Young expansion generated by a projective system of separable Fr\'{e}chet spaces $\mathcal Z=(Z_t, q^{ut}, {T})$, see Definition \ref{def-Young-extension}. Recall that in this case
$c: \Omega^*\equiv \Omega\times Z\to\Omega$ is the projection on the first factor. To simplify the notation, we put ${\mathcal P}(Y, \mathcal S)\equiv \mathcal P(Y)$, $\mathcal P^*(Y)\equiv P^*(Y, \mathcal S^*)$,
$\mathcal Pa (X) \equiv\mathcal{P}a(\mathcal X, \mathcal B)$, $\mathcal Pa^* (X) \equiv\mathcal{P}a(\mathcal X, \mathcal B^*)$ and $\mathcal Pa (Z) \equiv\mathcal{P}a(\mathcal Z, \mathcal B).$

Note that the linear homeomorphism $x\mapsto x\circ c$ naturally identifies the linear topological spaces $\mathcal Pa (X)$ and $\mathcal P(Y)$ with the respective linear topological subspaces of $\mathcal Pa^*(X)$ and $\mathcal P^*(Y)$. This justifies
\begin{definition}\label{def extension operators}
Let   $h: \mathcal Pa (X)\to \mathcal P(Y)$ be an LC operator. We say that an LC operator $h^*:\mathcal Pa^* (X)\to \mathcal P^*(Y)$ is \underline{an LC extension} of the operator $h$ if the restriction of $h^*$ to $\mathcal Pa (X)$ coincides with $h$.
\end{definition}
In the case of Young expansions generated by  Dirac measures, the extension of local operators can be constructed explicitly, as it is shown in the following remark.
\begin{remark}\label{ex-extention-Dirac-measure}
  Let $\mathcal B^*=(\Omega^*,{\mathcal F}^*,{\mathcal F}_t^*,P^*)$ be the Young expansion of the stochastic basis $\mathcal B =(\Omega ,{\mathcal F},{\mathcal F}_t, P)$ where the Young measure $P^*$ is generated by a random Dirac measure  $P^*=P\alpha^{-1}$ for some $\alpha\in \mathcal Pa (Z) $, i.e. $P^*(A)=P\{\omega\in\Omega: \ \alpha(\omega)\in A(\omega)\}$. In this case, the measure preserving map $\omega\mapsto (\omega,\alpha(\omega))$ gives rise to
the linear topological isomorphism  $\alpha_Y: \ y\mapsto y\circ\alpha$ between the spaces $\mathcal P (Y)$ and $\mathcal P^* (Y)$. Evidently, the inverse map is then given by  $\alpha^{-1}_Y: \tilde y \mapsto \tilde y\circ c$.
  Moreover, according to Example \ref{ex-Young-stoch-basis} the map $\alpha_X: \ x\mapsto x\circ\alpha$ is a linear topological isomorphism between the spaces $\mathcal Pa (X)$ and $\mathcal Pa^* (X)$. Let us, therefore, put $$h^*x=(h(x\circ \alpha)\circ c)=\alpha_Y^{-1}h\alpha_X.$$ Then $h^*:\ \mathcal Pa^*(X)\to \mathcal P^*(Y)$ is continuous, and we claim that $h^*$ is a local extension of $h$.
  Indeed, $h^*(x\circ c)=h(x\circ c\circ \alpha)\circ c=hx\circ c$, because $(c\circ\alpha)(\omega)=\omega$ for any $\omega\in\Omega,$ so that $h^*$ is an extension of $h$. To prove that $h^*$ is local, take $x,y\in\mathcal Pa^*(X)$ and $A\in\mathcal F\otimes \mbox{Bor}(Z)$ such that $x|_A=y|_A$ (it is sufficient to prove locality for such $A$). Then $B=\alpha^{-1}(A)\in \mathcal F$ and
  $$
  \begin{array}{c}
    x\circ\alpha|_B=y\circ\alpha|_B \ \ \Rightarrow \ \ h(x\circ\alpha)|_B=h(y\circ\alpha)|_B \ \ P-\mbox{a.s.}
    \\ \Rightarrow \ \ h(x\circ\alpha)\circ c|_{c^{-1}B}=h(y\circ\alpha)\circ c|_{c^{-1}B} \ \ P^*-\mbox{a.s.}
        \ \ \Rightarrow \ \ h^*x|_{c^{-1}B}=h^*y|_{c^{-1}B} \ \ P^*-\mbox{a.s.}
  \end{array}
    $$
But $$P^*(A\triangle c^{-1}B)=P\{\alpha\in A\triangle c^{-1}B\}=
P\left(\{\alpha\in A\}\triangle \{\alpha\in c^{-1}B\}\right)=P(B\triangle B)=0,
$$
so that $h^*x|_A=h^*y|_A \ P^*-\mbox{a.s.}$

In particular, this example shows that if $h$ is uniformly continuous on tight subsets of the space $\mathcal Pa(X)$ and the Young expansion $\mathcal B^*$ of $\mathcal B$ is generated by a random Dirac measure, i.e.  $P^*=P\alpha^{-1}$ ($\alpha\in \mathcal Pa (Z)$), then the extension $h^*$ of $h$ is uniformly continuous on tight subsets of the space $\mathcal Pa^*(X)$ as well. This follows from the fact that the linear topological isomorphisms $\alpha_X: \mathcal Pa (X)\to\mathcal Pa^* (X)$ and $\alpha_Y: \mathcal P(Y)\to \mathcal P^*(Y)$ preserve tight sets, because the map $\omega\mapsto (\omega,\alpha(\omega))$ is measure preserving.

The case of general Young expansions is considered in Theorem \ref{th-extension-operators-general}.
\end{remark}

The uniqueness property of LC extensions can be easily proven in a rather general setting.
\begin{theorem}\label{prop-unique-local-extension}
Let $\mathcal B^*=(\Omega^*,{\mathcal F}^*,({\mathcal F}_t^*)_{t\in {T}},P^*)$ be
{a Young expansion} of the stochastic basis $\mathcal B \! =\! (\Omega ,{\mathcal F},({\mathcal F}_t)_{t\in {T}}, P)$ and $\mathcal X=(X_t, p^{ut}, {T})$ be a projective system of separable Banach spaces satisfying Property $(\Pi)$.
  If an LC operator $h: \mathcal Pa (X)\to {\mathcal P}(Y)$ admits a continuous extension $h^*: \mathcal Pa^* (X)\to {\mathcal P}^*(Y)$, then this extension is unique.
\end{theorem}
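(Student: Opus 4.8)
The plan is to show that any LC extension of $h$ is forced to take prescribed values, first on the embedded base space, then on the simple adapted points, and finally — by continuity together with a density argument — on all of $\mathcal Pa^*(X)$. Concretely, I would take two LC extensions $h_1^*$ and $h_2^*$ of $h$ and prove $h_1^*=h_2^*$, exploiting that both are local, both are continuous in probability, and both restrict to $h$ on $\mathcal Pa(X)$.

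First I would record what the extension property alone forces. Under the identification $x\mapsto x\circ c$ of $\mathcal Pa(X)$ (resp. $\mathcal P(Y)$) with a subspace of $\mathcal Pa^*(X)$ (resp. $\mathcal P^*(Y)$), the requirement that $h_j^*$ restrict to $h$ reads $h_j^*(\alpha\circ c)=(h\alpha)\circ c$ for every $\alpha\in\mathcal Pa(X)$. In particular $h_1^*$ and $h_2^*$ agree on all points of the form $\alpha\circ c$. Next I would use locality to pass to simple points. Let $u=\bigcup_{i=1}^s\alpha_i I_{A_i}\in\mathcal Sa(\mathcal X,\mathcal B^*)$, with $\alpha_i\in\mathcal Pa(X)$ and the $P^*$-continuity sets $A_i\in\mathcal F^*$ partitioning $\Omega^*$. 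On each $A_i$ one has $u|_{A_i}=(\alpha_i\circ c)|_{A_i}$, so locality of $h_j^*$ gives $h_j^*u|_{A_i}=h_j^*(\alpha_i\circ c)|_{A_i}=\bigl((h\alpha_i)\circ c\bigr)|_{A_i}$, which is independent of $j$. Taking the union over the partition $\{A_i\}$ yields $h_1^*u=h_2^*u$ $P^*$-a.s., so the two extensions coincide on every simple point.

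Finally I would invoke density and continuity. By Theorem \ref{th-approx-simple-points}, applied with $x=y$ and $A=\Omega^*$, every $x\in\mathcal Pa^*(X)$ is a limit in probability $P^*$ of a sequence $x_n\in\mathcal Sa(\mathcal X,\mathcal B^*)$ of simple points; here Property $(\Pi)$ for $\mathcal X$ is exactly what makes this approximation available. Since $h_1^*$ and $h_2^*$ are continuous in probability and agree on each $x_n$, I conclude $h_1^*x=\lim_n h_1^*x_n=\lim_n h_2^*x_n=h_2^*x$, so $h_1^*=h_2^*$.

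The genuinely substantive input is the density of simple points, i.e. Theorem \ref{th-approx-simple-points}; granting that, the remainder is bookkeeping. The one place to be careful is the second step: I must check that locality — stated a priori for arbitrary $A\subset\Omega^*$, see Remark \ref{rem-def-local-operators} — legitimately applies to the continuity sets $A_i$ of the partition, and that the identification $\alpha\mapsto\alpha\circ c$ is compatible on both the domain and the range side, so that the value $\bigl((h\alpha_i)\circ c\bigr)|_{A_i}$ is genuinely common to both extensions. I therefore expect this identification-and-partition argument to be the main (though routine) obstacle, the heavy analytic lifting having been delegated to the approximation theorem.
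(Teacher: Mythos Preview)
Your proof is correct and follows essentially the same approach as the paper's: both show that locality forces any two LC extensions to agree on $\mathcal{S}a(\mathcal X,\mathcal B^*)$ via the formula $h_j^*u=\sum_i (h\alpha_i)I_{A_i}$, then invoke the density of simple points (Theorem~\ref{th-approx-simple-points}) together with continuity. Your write-up is in fact a bit more explicit than the paper's about the identification $\alpha\mapsto\alpha\circ c$ and about how locality is applied piecewise on the partition $\{A_i\}$, but the structure is identical.
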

\begin{proof}
  If $x\in \mathcal{S}a(\mathcal X, \mathcal B^*)$, then there exist $\alpha_i\in \mathcal Pa (X)$ and disjunct subsets $A_i\in \mathcal F^*$ ($i=1,..,s$), $\bigcup\limits_{i=1}^sA_i=\Omega^*$, such that $x=\sum\limits_{i=1}^s\alpha_iI_{A_i}$.
  The property of locality implies that any two extensions $h^*_1$ and $h^*_2$ of the operator $h$ must satisfy
  $$
  h^*_1x=h^*_2x=\sum\limits_{i=1}^sh(\alpha_i)I_{A_i} \ \ P^*-\mbox{a.s.}
  $$
By Theorem \ref{th-approx-simple-points}, the set $\mathcal{S}a(\mathcal X, \mathcal B^*)$ is dense in $\mathcal Pa^* (X)$. Therefore $h^*_1x=h^*_2x$ for all $x\in\mathcal Pa^* (X)$, as both
are continuous in the topology of this space.
\end{proof}

The next result generalizes the one considered in Remark \ref{ex-extention-Dirac-measure}.

\begin{theorem}\label{th-extension-operators-general}
  Let $\mathcal X=(X_t, p^{ut}, {T})$ be a projective system of separable Banach spaces  and $\mathcal B^*=(\Omega^*,{\mathcal F}^*,({\mathcal F}_t^*)_{t\in {T}},P^*)$ be
{a Young expansion} of the stochastic basis $\mathcal B \! =\! (\Omega ,{\mathcal F},({\mathcal F}_t)_{t\in {T}}, P)$ generated by a projective system of separable Fr\'{e}chet spaces $\mathcal Z=(Z_t, p^{ut}, {T})$. Assume that $\mathcal X$ and $\mathcal Z$ satisfy Property $(\Pi)$.
Then any
local operator $h: \mathcal Pa (X)\to {\mathcal P}(Y)$,  which is uniformly continuous on tight subsets, admits an LC extension $h^*: \mathcal Pa^* (X)\to {\mathcal P}^*(Y)$, which is also uniformly continuous on tight subsets.
\end{theorem}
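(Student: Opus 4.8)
The plan is to determine $h^*$ first on the dense set $\mathcal{S}a(\mathcal X,\mathcal B^*)$ of simple points, where locality leaves no freedom, and then to extend it by continuity. Following the computation in the proof of Theorem \ref{prop-unique-local-extension}, for a simple point $x=\sum_{i=1}^s\alpha_iI_{A_i}$ (with $\alpha_i\in\mathcal Pa(X)$ and disjoint $P^*$-continuity sets $A_i$ covering $\Omega^*$) I would set $h^*x:=\sum_{i=1}^s(h\alpha_i)I_{A_i}$, where each $h\alpha_i\in\mathcal P(Y)$ is read inside $\mathcal P^*(Y)$ via $\,\cdot\circ c$. Independence of this prescription from the chosen representation follows from the locality of $h$ on $\mathcal Pa(X)$ after passing to a common refinement of two partitions, and the extension property $h^*(x\circ c)=(hx)\circ c$ is the case $s=1$, $A_1=\Omega^*$.

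The heart of the argument is a continuity estimate for $h^*$ on simple points lying in a tight set, and here I would transport everything back to the base $\mathcal B$ by means of the adapted points $\alpha_n\in\mathcal Pa(Z)$ generating the Young measure, $P^*_\omega=\lim_n\delta_{\alpha_n(\omega)}$. Writing two simple points over a common partition, $x=\sum_k\alpha_kI_{C_k}$ and $y=\sum_k\beta_kI_{C_k}$, I form the pullbacks $x_n:=x\circ\alpha_n$ and $y_n:=y\circ\alpha_n$; by the Dirac case (Example \ref{ex-Young-stoch-basis} and Remark \ref{ex-extention-Dirac-measure}) these lie in $\mathcal Pa(X)$, and locality of $h$ gives $hx_n=\sum_k(h\alpha_k)I_{D_{k,n}}$ and $hy_n=\sum_k(h\beta_k)I_{D_{k,n}}$ with $D_{k,n}=\{\alpha_n\in C_k\}$. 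Because the $C_k$ are $P^*$-continuity sets, the bounded function $(\omega,z)\mapsto\min\{\|h^*x-h^*y\|_Y;1\}$ is $P^*$-a.s.\ continuous, so Remark \ref{rem-narrow-topology} yields the two identities $d_Y^*(h^*x,h^*y)=\lim_n d_Y(hx_n,hy_n)$ and $d_X^*(x,y)=\lim_n d_X(x_n,y_n)$, where $d_X^*,d_Y^*$ denote the metrics $E^*\min\{\|\cdot\|;1\}$ on $\mathcal P^*(X),\mathcal P^*(Y)$. This reduction is exactly the step obstructed by the coupling between $\omega$ and $z$, and I expect it to be the main difficulty of the whole proof.

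With the two identities in hand, I would invoke the uniform continuity of $h$ in the quantitative form of Theorem \ref{th-uniform-cont}(3). Given a tight $\mathcal K^*\subset\mathcal Pa^*(X)$ and $\varepsilon>0$, I first choose $\gamma$ with $O(\gamma)<\varepsilon/2$ and a compact $Q\subset X$ with $P^*\{x\notin Q\}<\gamma/2$ for all $x\in\mathcal K^*$, then obtain $\delta>0$ from condition (3) applied to $Q$ and $\varepsilon/2$. For simple $x,y\in\mathcal K^*$ with $d_X^*(x,y)<\delta/2$ the identities above give $d_X(x_n,y_n)<\delta$ for all large $n$; moreover, for a simple point the escape set $\{x\notin Q\}$ is itself a $P^*$-continuity set (its fibrewise boundary is contained in $\bigcup_k\partial C_k$), so $P\{x_n\notin Q\}=(P\alpha_n^{-1})(\{x\notin Q\})\to P^*\{x\notin Q\}<\gamma/2$, and likewise for $y_n$. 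Hence $P\{x_n\notin Q\},P\{y_n\notin Q\}<\gamma$ for large $n$, condition (3) gives $d_Y(hx_n,hy_n)<\varepsilon/2+O(\gamma)<\varepsilon$, and letting $n\to\infty$ produces $d_Y^*(h^*x,h^*y)\le\varepsilon$. This is the sought uniform continuity of $h^*$ on tight sets of simple points, with the $O(\gamma)$ term absorbing precisely the small escape mass that the approximation cannot control.

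It remains to extend $h^*$ to all of $\mathcal Pa^*(X)$ and to check locality. Since $\mathcal{S}a(\mathcal X,\mathcal B^*)$ is dense by Theorem \ref{th-approx-simple-points} and $\mathcal P^*(Y)$ is complete, and since a convergent-in-probability sequence has an a.s.\ convergent subsequence which, together with its limit, forms a tight set, the estimate above shows that $\{h^*x_n\}$ is Cauchy for any simple $x_n\to x$; this defines $h^*x$ unambiguously and, pushed to limits, yields uniform continuity of $h^*$ on every tight subset of $\mathcal Pa^*(X)$. Finally, locality passes from simple points to the extension through the joint approximation of Theorem \ref{th-approx-simple-points}: if $x|_A=y|_A$, it supplies simple $x_n\to x$, $y_n\to y$ with $x_n|_{A_n}=y_n|_{A_n}$ and $P^*(A_n\triangle A)\to0$, so that $h^*x_n|_{A_n}=h^*y_n|_{A_n}$ passes in the limit to $h^*x|_A=h^*y|_A$. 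Together with the extension property established above, this exhibits $h^*$ as the required LC extension, uniformly continuous on tight subsets.
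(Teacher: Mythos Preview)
Your overall architecture matches the paper's: define $h^*$ on $\mathcal{S}a(\mathcal X,\mathcal B^*)$ by locality, prove uniform continuity there, extend by density, and then recover locality on all of $\mathcal Pa^*(X)$ via the joint approximation of Theorem~\ref{th-approx-simple-points}. Your verification that $\{x\notin Q\}$ is a $P^*$-continuity set for simple $x$, and hence that $P\{x_n\notin Q\}\to P^*\{x\notin Q\}$, is correct and is the key point that makes the pullback argument go through.

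Where you differ from the paper is in the uniform-continuity step on simple points. The paper works with condition (2) of Theorem~\ref{th-uniform-cont} and compensates for the fact that $x\circ\alpha_\nu$, $y\circ\alpha_\nu$ need not lie in $\mathcal Pa(Q)$ by an explicit finite-dimensional detour: it applies $\pi_n$, then projects the pair $(\pi_n x,\pi_n y)$ onto the compact constraint set $W^n=\{(u,v):u,v\in Q,\ \|u-v\|\le\rho\}$ via Lemma~\ref{lem-projection}, and uses locality to control the error incurred by this projection. Your route is more direct: you invoke condition (3) of Theorem~\ref{th-uniform-cont}, so the escape mass outside $Q$ is already absorbed into the $O(\gamma)$ term, and no projection machinery is needed. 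This is a genuine simplification and is a nice use of the equivalence in Theorem~\ref{th-uniform-cont}.

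There is, however, a gap in your last step. The phrase ``pushed to limits, yields uniform continuity of $h^*$ on every tight subset of $\mathcal Pa^*(X)$'' hides a difficulty: given $x,y$ in a tight set $\mathcal K^*$, the simple approximants $x_m\to x$, $y_m\to y$ furnished by Theorem~\ref{th-approx-simple-points} need not satisfy $P^*\{x_m\notin Q\}<\gamma$, so your estimate on simple points does not apply to them, and the modulus $\delta$ you obtained for $\mathcal K^*$ does not transfer. The paper closes this gap in its final paragraph by first invoking Theorem~\ref{th-uniform-cont} on the (already established) LC operator $h^*$, which reduces the problem to checking uniform continuity on sets of the form $\mathcal Pa^*(X)\cap\mathcal P(Q_0,\mathcal S^*)$, and then using Remark~\ref{rem-approximation-simple-points} to choose simple approximants whose values remain in a fixed larger compact $Q$. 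You should add this step (or an equivalent argument) to complete your proof.
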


\begin{proof}
  See Appendix B, Subsection \ref{sec-extension-LC}.
\end{proof}

\section{Main results}\label{sec-inf-dim-fixed-point}

In this section we justify the general infinite dimensional fixed-point theorem formulated in \cite{Pon-1} without a proof. The first step in this direction will be a finite dimensional fixed-point theorem for LC operators.

It is still assumed that
$\mathcal B$ is a stochastic basis on a complete probability space $\mathcal S$.

\begin{theorem}\label{th-finite-dim}
  Let $\mathcal X=(X_t, p^{ut}, {T})$ be a projective system of finite dimensional spaces.
  If $U$ is a closed, convex, bounded and nonempty subset of $X$, $\mathcal Pa(U)\equiv \mathcal Pa(\mathcal X, \mathcal B)\cap\mathcal P(U, \mathcal S)$ and $h: \ {\mathcal P}a(U)\to {\mathcal P}a(U)$ is an LC operator, then $h$ has at least one fixed point.
\end{theorem}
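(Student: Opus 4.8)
The plan is to exploit that in finite dimension the target set $U$ is compact, reduce the infinite‑dimensional problem to a family of genuine finite‑dimensional Brouwer problems, and then control the limit by tightness. First I would record the standing reductions. Since $X=X_b$ is finite dimensional, the closed bounded convex set $U$ is \emph{compact}; hence $\mathcal{P}a(U)$ is tight (take $Q=U$ in Definition \ref{def-tight-set}). The system $\mathcal X$ satisfies Property $(\Pi)$ trivially, the identity being a finite dimensional generalized Volterra map, and a short argument using the compactness of $U$ shows that the equivalent conditions of Theorem \ref{th-uniform-cont} hold, so the LC operator $h$ is uniformly continuous on the tight set $\mathcal{P}a(U)$. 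Identifying $X$ with $R^m$ and embedding $\mathcal{P}a(U)$ into $L^2(\Omega;R^m)$, the set $K:=\mathcal{P}a(U)$ becomes bounded, closed and convex; on $K$ convergence in probability coincides with $L^2$‑convergence, so $h:K\to K$ is $L^2$‑continuous. It is harmless to take $\mathcal X$ to be the Euclidean system of Definition \ref{def-Euclidean}, so that an adapted point is a discrete‑time process $x=(x_1,\dots,x_m)$ with $x_i$ being $\mathcal F_i$‑measurable and $x(\omega)\in U$.

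Next I would set up the finite approximation. Choose finite $\sigma$‑subalgebras $\mathcal F^n_t\uparrow\mathcal F_t$ compatible with the filtration, and let $K_n\subset K$ be the closed convex set of $x\in K$ adapted to $(\mathcal F^n_t)$. Because each $\mathcal F^n_t$ is finite and $U$ is compact, $K_n$ is a \emph{compact, convex, finite‑dimensional} subset of $L^2$. Let $\Pi_n:L^2\to K_n$ be the metric ($L^2$‑)projection; it is nonexpansive, hence continuous, and maps $U$‑valued adapted points to $U$‑valued adapted points. Then $h_n:=\Pi_n\circ h$ maps the compact convex finite‑dimensional set $K_n$ continuously into itself, so Brouwer's theorem yields
\[
x_n\in K_n\quad\text{with}\quad x_n=\Pi_n\,h(x_n).
\]
Since $\mathcal F^n_t\uparrow\mathcal F_t$, the adapted simple functions are dense and $\Pi_n\to\mathrm{id}$ strongly on $K$; the plan is to pass this relation to the limit.

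The hard part is the passage to the limit, and it is exactly here that the non‑compactness of tight sets bites: although $\{x_n\}\subset\mathcal{P}a(U)$ is tight, it need not be relatively compact in probability, so one cannot simply extract $x_n\to x^{*}$ in probability and conclude $hx^{*}=x^{*}$. My plan is therefore to pass to the \emph{narrow} (Young‑measure) limit. The laws of $(\omega,x_n(\omega))$ are random Dirac measures on the compact $U$, so by Prokhorov a subsequence converges narrowly to a disintegrated measure $P^{*}_\omega$, which by Definition \ref{def-Young-extension} defines a Young expansion $\mathcal B^{*}$ together with a canonical $x^{*}\in\mathcal{P}a(\mathcal X,\mathcal B^{*})$. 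Invoking the LC extension $h^{*}$ from Theorem \ref{th-extension-operators-general} (whose uniform continuity on tight sets makes it narrowly continuous along this sequence), and letting $\Pi_n\to\mathrm{id}$ in the relation $x_n=\Pi_n h(x_n)$, the limit satisfies $h^{*}x^{*}=x^{*}$, i.e. $x^*$ is a \emph{weak} fixed point.

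Finally I would upgrade the weak fixed point to a genuine one, and this is the step I expect to be the main obstacle, resolvable precisely because the dimension is finite. The limiting disintegration $P^{*}_\omega$ is a probability measure supported in the compact set $U$, and the plan is to show that the extra coordinate $Z$ can be eliminated, replacing $x^{*}$ by an honest adapted point on $\mathcal B$ solving $hx=x$. The natural route is a measurable selection: localizing $h^{*}x^{*}=x^{*}$ via Theorem \ref{th-approx-simple-points} to $P^{*}$‑continuity pieces reduces it on each piece to the base equation $h\alpha=\alpha$ with $\alpha\in\mathcal{P}a(\mathcal X,\mathcal B)$; the pointwise solution set is closed, nonempty and, by locality, stable under gluing, so a Kuratowski–Ryll‑Nardzewski selection respecting the filtration should produce an adapted $x^{\sharp}\in\mathcal{P}a(U)$ with $hx^{\sharp}=x^{\sharp}$. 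The delicate points—measurability and nonemptiness of the pointwise solution sets, adaptedness of the selection, and the legitimacy of the gluing via locality—are exactly where the finite dimension of $X$ is indispensable, since it is what makes these solution sets compact‑valued and the selection feasible, and I regard verifying them as the core of the proof.
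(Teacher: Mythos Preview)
Your approach has a genuine gap in the final ``upgrade'' step, and the sketch you give does not close it. The Young-measure limit produces only a \emph{weak} fixed point $x^{*}$ on an expanded basis $\mathcal B^{*}$, and descending to a strong fixed point on $\mathcal B$ is not automatic: in the paper's own Theorem \ref{th-fixed-point-general} this descent requires \emph{uniqueness} of weak fixed points, which you do not have here. Your measurable-selection plan presupposes that the ``pointwise solution set'' $\{x\in U: f(\omega,x)=x\}$ is nonempty for a.e.\ $\omega$, but on the domain $\mathcal{P}a(U)$ the operator $h$ is \emph{not} in general a superposition operator $h_f$ --- the representation by a Carath\'eodory map (the Nemytskii conjecture of \cite{Pon-0}) holds on $\mathcal P(U)$, not on the adapted subspace (cf.\ Example \ref{ex-trivial-stoch-basis}). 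So there is no pointwise map whose Brouwer fixed points you could select, and the claimed nonemptiness is exactly the content of the theorem you are trying to prove.

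The paper's proof avoids weak fixed points entirely and proceeds by a direct induction on the filtration. After the reduction to a Euclidean system with a finite filtration $(\mathcal F_i)_{0\le i\le m}$, it introduces for each $i$ the \emph{coincidence set} $\hat\Omega_i$ on which $\mathcal F_i$ and $\mathcal F_{i+1}$ agree, and shows --- using locality of $h$ and a measurable-projection argument --- that $h$ induces well-defined truncated LC operators $h_i$ on $\Omega_i=\Omega\setminus\hat\Omega_i$ acting in the $i$-th coordinate space. Crucially, each $h_i$ lives on a space of the form $\mathcal P(U_i)$ \emph{without} an adaptedness constraint, so Lemma \ref{th-Nemytskii} (Nemytskii representation $+$ Brouwer $+$ measurable selection) applies and yields a partial fixed point. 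The induction then builds a consistent tower of such partial fixed points up to $i=m$, at which point $h_m=h$. The whole argument stays on the original basis $\mathcal B$; no expansion is needed. This is the missing idea in your proposal: one must strip off the adaptedness constraint one coordinate at a time, exploiting locality to make the truncations consistent, rather than trying to recover adaptedness after a narrow limit.
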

\begin{proof}
  See Appendix B, Subsection \ref{sec-proof-finite-dim}.
\end{proof}
 In the rest of the section we assume $\mathcal X=(X_t, p^{ut}, T)$ to be a projective system of arbitrary separable Banach spaces.
The following definition of a weak fixed point generalizes the one briefly described in Section \ref{sec-Prelim}:

\begin{definition}\label{def weak sol abstract}
Let $h: \mathcal Pa (X, \mathcal B)\to {\mathcal P}(X, \mathcal S)$ be an LC operator. If there exists an expansion
$\mathcal B^*$ of the stochastic basis $\mathcal B$, an LC extension $h^*: \ \mathcal{P}a(\mathcal X, \mathcal B^*)\to {\mathcal P}(X, \mathcal S^*)$ of the operator $h$ and a random point $x^*\in \mathcal Pa (X, \mathcal B^*)$ such that $h^*x^*=x^*$ $P^*$-a.s., then $x^*$ is called \underline{a weak fixed point} of the operator $h$.
\end{definition}

Note that $h^*$ in Definition \ref{def weak sol abstract} does exist if $\mathcal B^*$ is a Young expansion and $h$ is local and uniformly continuous on every tight subset of its domain, see Theorem \ref{th-extension-operators-general}.

\begin{remark}\label{rem-weak-sol}
  The notion of a weak solution is well-known in stochastic analysis.
  It is also a well-established practice to call solutions strong if they are defined on the original probability space $\mathcal S$. Following this terminology we call any fixed point of the operator $h$ belonging to the space  $\mathcal Pa (X, \mathcal B)$  \underline{a strong fixed point}.
 \end{remark}

Now we are able to formulate the main result of the paper.
\begin{theorem}\label{th-fixed-point-general}
  Let the projective system of separable Banach spaces $\mathcal X=(X_t, p^{ut}, T)$ satisfy Property (${\Pi}$) and
  $h: \mathcal{P}a (\mathcal X, \mathcal B)\to \mathcal{P}a (\mathcal X, \mathcal B)$ be a local  operator which is uniformly continuous on tight subsets of its domain.
  \begin{enumerate}
    \item If for some convex, closed and nonempty set $V\subset X$ the operator $h$ maps $\mathcal{P}a (\mathcal X, \mathcal B)\cap \mathcal P(V, \mathcal S)$ into its tight subset, then $h$ has at least one weak fixed point $x^*\in \mathcal{P}a(\mathcal X, \mathcal B^*)\cap \mathcal P(V, \mathcal S^*)$ for some Young expansion $\mathcal B^*$ of $\mathcal B$ defined on the probability space $\mathcal S^*$.
    \item If for any Young expansion $\mathcal B^*$ of $\mathcal B$ the associated (unique) LC extension $h^*$ of the operator $h$ has at most one fixed point in $\mathcal{P}a(\mathcal X, \mathcal B^*)$, then each weak fixed point of the operator $h$ will be equivalent to a unique strong, i.e. belonging to the space $\mathcal{P}a (\mathcal X, \mathcal B)$, solution of the equation $hx=x$.

  \end{enumerate}
     \end{theorem}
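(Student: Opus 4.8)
The plan is to run a stochastic Galerkin--Schauder scheme: approximate $h$ by finite-dimensional local self-maps to which the finite-dimensional fixed-point theorem \ref{th-finite-dim} applies, collect the resulting approximate fixed points, pass to a narrow (Young-measure) limit to manufacture the expansion $\mathcal B^*$, and finally identify the limit as a genuine fixed point of the LC extension $h^*$ supplied by Theorem \ref{th-extension-operators-general}. For part (2) I would then run a Yamada--Watanabe-type collapse of the Young measure to a Dirac mass.

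For the finite-dimensional reduction I set $W=\mathcal{P}a(\mathcal X,\mathcal B)\cap\mathcal P(V,\mathcal S)$, which is closed, convex and nonempty, with $h(W)\subset\mathcal K$ for a tight $\mathcal K\subset W$. Using Property $(\Pi)$ (Definition \ref{def-property-Pi}) I fix finite-dimensional Volterra maps $\pi_n$ converging strongly to the identity; on the finite-dimensional range $\pi_n X$ I compose with a coordinatewise truncation $\tau_{R_n}$ onto a box of radius $R_n$. Since such a truncation commutes with the orthogonal bonding maps of the induced Euclidean projective system (Definition \ref{def-Euclidean}), it is again a finite-dimensional Volterra map, so $h_n=\tau_{R_n}\circ\pi_n\circ h$ is an LC self-map of $\mathcal{P}a(U_n)$ for a closed, convex, bounded $U_n\subset\pi_n X$. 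Theorem \ref{th-finite-dim} then produces $x_n$ with $x_n=\tau_{R_n}\pi_n h x_n$. Because each fixed point equals $\pi_n h x_n$ up to the truncation, and $\pi_n\to\mathrm{id}$ strongly while $\mathcal K$ is tight, the truncation is asymptotically inactive and the family $\{x_n\}$ is tight in $\mathcal P(X,\mathcal S)$; the construction of the bounded convex self-map that simultaneously respects the filtration and keeps the iterates near $V$ is a technical, but routine, point here.

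The decisive step is the limit. Tightness of $\{x_n\}$ makes the random Dirac measures $\{\delta_{x_n(\cdot)}\}$ relatively compact in the narrow topology, so along a subsequence they converge to a disintegration $P^*_\omega$; by Definition \ref{def-Young-extension} (with $\mathcal Z=\mathcal X$, which inherits Property $(\Pi)$) this yields a Young expansion $\mathcal B^*$, and the canonical coordinate $x^*(\omega,z)=z$ lies in $\mathcal{P}a(\mathcal X,\mathcal B^*)$ by property (4) of that definition. By Theorem \ref{th-extension-operators-general} there is an LC extension $h^*$ of $h$, uniformly continuous on tight subsets, and I must pass to the limit in $x_n=\tau_{R_n}\pi_n h x_n$ to obtain $h^*x^*=x^*$ $P^*$-a.s. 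I would do this by approximating $x^*$ by simple points (Theorem \ref{th-approx-simple-points}), on which $h^*$ acts blockwise through $h$, and then interchange the narrow limit with the operator using the uniform continuity of $h$ on the tight set $\mathcal K$ in the form of Theorem \ref{th-uniform-cont}, together with the narrow convergence and Remark \ref{rem-narrow-topology}; closedness of $V$ forces $x^*\in\mathcal P(V,\mathcal S^*)$. I expect this interchange to be the principal obstacle: $h$ is only continuous in probability and genuinely non-compact, so it is precisely the conjunction of the Young expansion, uniform continuity on tight sets, and density of simple points that licenses the nonlinear limit. This establishes (1).

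For part (2) I would argue as in Yamada--Watanabe. Given the at-most-one-fixed-point hypothesis across all Young expansions, I claim the disintegration $P^*_\omega$ of any weak fixed point is a.s. a Dirac mass. Forming an expansion carrying two conditionally independent copies of the extra coordinate, and using the explicit Dirac-generated extension of Remark \ref{ex-extention-Dirac-measure}, produces two fixed points of a common LC extension; uniqueness forces them to coincide $P^*$-a.s., and conditional independence then forces each to be $\mathcal F$-measurable, i.e. $P^*_\omega=\delta_{x(\omega)}$ with $x\in\mathcal{P}a(\mathcal X,\mathcal B)$. By Remark \ref{ex-extention-Dirac-measure} this $x$ satisfies $hx=x$ and $x^*$ is equivalent to it, while uniqueness of the strong solution follows by applying the hypothesis to the trivial Dirac expansion, on which two strong solutions are both fixed points of $h^*$. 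The main difficulty here is exactly the collapse of the Young measure to a Dirac mass, which is the abstract analogue of pathwise uniqueness implying strong existence and is carried out through the doubling argument on the expansion.
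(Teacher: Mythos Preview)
Your proposal follows essentially the same route as the paper's proof: a finite-dimensional reduction via Property $(\Pi)$ and Theorem \ref{th-finite-dim}, tightness to extract a narrow (Young) limit with $\mathcal Z=\mathcal X$, the canonical coordinate $x^*(\omega,z)=z$ as the candidate, and the passage to the limit through simple-point approximation (Theorem \ref{th-approx-simple-points}) combined with the uniform-continuity estimate of Theorem \ref{th-uniform-cont}. For part (2) your Yamada--Watanabe doubling is exactly what the paper does: form the product disintegration $P^*_\omega\otimes P^*_\omega$, lift the two coordinate copies to fixed points of a common $h^{**}$, and conclude $P^*_\omega$ is Dirac.

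One point you wave through is not quite routine as stated. Your truncation $\tau_{R_n}$ onto a box in $\pi_nX$ gives a fixed point $x_n$ with values in that box, but nothing forces the box to lie in $V$; consequently you cannot invoke the hypothesis $h(\mathcal Pa(V))\subset\mathcal K$ to control $hx_n$, and the tightness of $\{x_n\}$ does not follow. The paper fixes this by first choosing compact convex $Q^n\subset V$ with $P\{x\notin Q^n\}<1/n$ for all $x\in\mathcal K$ (using tightness of $\mathcal K$ and convexity of $V$), and then projecting not onto a box but onto $X^n\cap Q^n$ via Lemma \ref{lem-projection}, which produces an adapted projection onto an arbitrary convex compact. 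This guarantees $\alpha_n\in\mathcal Pa(Q^n)\subset\mathcal Pa(V)$, hence $h\alpha_n\in\mathcal K$, and the rest goes through. Your box truncation is the special case of Lemma \ref{lem-projection} for a cube, so the repair is simply to replace the cube by $Q^n$.
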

\begin{proof}
We will use the simplified notation for the spaces of random points. Given $U\subset X$ we put
 $$\mathcal Pa (U) \equiv\mathcal{P}a(\mathcal X, \mathcal B)\cap \mathcal P (U, \mathcal S) \ \ \ \mbox{and} \ \ \ \mathcal Pa^*(U) \equiv\mathcal{P}a(\mathcal X, \mathcal B^*)\cap \mathcal P (U, \mathcal S^*).$$
In particular,
$$\mathcal Pa (X)\equiv \mathcal{P}a(\mathcal X, \mathcal B) \ \ \ \mbox{and} \ \ \ \mathcal Pa^*(X)\equiv \mathcal{P}a(\mathcal X, \mathcal B^*).$$

The Young expansion in the first part of the proof will be generated by the projective family $\mathcal Z=(Z_t, q^{ut}, T)$ coinciding with $\mathcal X$:
\begin{equation}\label{eq_proj familiy_main_th}
  \mathcal Z=\mathcal X=(X_t, p^{ut}, T).
\end{equation}
In accordance with the notational agreement from Remark \ref{rem-projective-systems} we write $Z\equiv Z_b$, and $X\equiv X_b$, so that $X=Z$. The Young measure $P^*$ on $\Omega^*=\Omega\times Z=\Omega\times X$ will be constructed in the course of the proof.

\textit{Existence of a weak fixed point.}

Let $h(\mathcal Pa(V))\subset \mathcal K$ for some tight subset $\mathcal K\subset \mathcal Pa(V)$. For any $n\in N$ there exists a compact subset $Q^n\subset V$ such that $P\{x\notin Q^n\}<1/n$ for all $x\in \mathcal K$. As $V$ is convex we may assume that $Q^n$ is convex, too.

Consider the sequence $\pi_n: X\to X$ of finite dimensional linear Volterra maps converging strongly to the identity map, which exists due to Property (${\Pi}$). From Remark \ref{rem-Volterra} we know that the continuous superposition operators $h_{\pi_n}$ generated by $\pi_n$  map the space $\mathcal Pa(X)$ into itself. Moreover, the strong convergence of the sequence $\{\pi_n\}$ is uniform on compacts, and it is therefore easy to check that $h_{\pi_n}(x)=\pi_n(x)\to x$ uniformly on $\mathcal K$. That is why we may assume, without loss of generality, that $P\{\|\pi_n(x)-x\|\ge 1/n\}<1/n$ for all $x\in \mathcal K.$

For any $n\in N$ let us define the finite dimensional projective system by
\begin{equation}\label{eq-finite-dim-system}
  \mathcal X^n=(X^n_t, p^{ut}_n, T), \ \  \ \mbox{where} \ \ \ X^n=\pi_n(X), \ X^n_t=p^{tb}(X^n), \ p^{ut}_n=p^{ut}|_{X^n_t}.
\end{equation}
If $t\ge u \ge v$ and $x=p^{tb}y\in X^n_t$ (for some $y\in X^n$), then
$$(p^{vu}_n\circ p^{ut}_n)(x)=(p^{vu}|_{X^n_u}\circ p^{ut}|_{X^n_t})(x)=(p^{vu}\circ p^{ut})(x)=p^{vt}x=(p^{vt}|_{X^n_t})(x)=p^{vt}_nx,$$
as $p^{ut}x= (p^{ut} \circ p^{tb})(y)=p^{ub}y\in X_u^n.$ Therefore, $\mathcal X^n$ is a projective system.
Evidently, $\mathcal Pa(X^n)\subset \mathcal Pa(X)$ and  $h_{\pi_n}(\mathcal Pa(X)\subset \mathcal Pa(X^n)$.

By Lemma \ref{lem-projection} there are continuous projections $\phi_n: \ X^n\to X^n\cap Q^n$ such that the associated superposition operators satisfy $h_{\phi_n} (\mathcal Pa(X^n))\subset \mathcal Pa(X^n\cap Q^n)$. By construction,
\begin{equation}\label{eq-fixed-point-general-1}
P\{\|(\phi_n\circ\pi_n)(x)-x\|\ge 1/n\}<2/n \ \ \ \mbox{for all} \ \ \ x\in \mathcal K.
\end{equation}
For the LC operators $h_n\equiv h_{\phi_n}\circ h_{\pi_n} \circ h$ we have
$h_n(\mathcal Pa(X^n\cap Q^n))\subset \mathcal Pa(X^n\cap Q^n)$. Hence by the finite dimensional fixed-point theorem \ref{th-finite-dim},
\begin{equation}\label{eq_alpha_n}
\exists\alpha_n\in \mathcal Pa(X^n\cap Q^n) \ \ \ \mbox{
such that} \ \ \  h_n\alpha_n=\alpha_n \ P-\mbox{a.s.}
\end{equation}
As $\alpha_n\in \mathcal Pa(Q^n)\subset \mathcal Pa(V)$, we have $h\alpha_n\in \mathcal K$ for all $n\in N$. From (\ref{eq-fixed-point-general-1}) it then follows that
$$
\begin{array}{c}
P\{\|h\alpha_n-\alpha_n\|\ge 1/n\}=P\{\|h\alpha_n-h_n\alpha_n\|\ge 1/n\} \\
=P\{\|h\alpha_n-(h_{\phi_n}\circ h_{\pi_n} \circ h)\alpha_n\|\ge 1/n\}<2/n \ \ \ \mbox{for all} \ \ \ n\in N.
\end{array}
$$
Hence
\begin{equation}\label{eq-fixed-point-general-1a}
  \|h\alpha_n-\alpha_n\|_X\to 0 \ \ \ \mbox{in probability} \ P \ \mbox{as} \ n\to\infty.
\end{equation}
Moreover, the set $\{h\alpha_n | \ n\in N\}$ is tight, so that the set $\{\alpha_n: \ n\in N\}$ satisfies the assumptions described in Remark \ref{rem_def_tight_sets}, according to which the latter set is tight, too. Thus, the sequence of the random Dirac measures $\{\delta_{\alpha_n(\omega)}\}$ is precompact in the narrow topology of the space $Pr_\Omega(Z)$, and we may assume, without loss of generality, that  $\{\delta_{\alpha_n(\omega)}\}$ converges to a random probability measure $\mu_\omega$ on this $\sigma$-algebra. Using $Z=X$   let us define $P^*$ by
\begin{equation}\label{eq_P_*}
  dP^*(\omega, x)=d\mu_\omega(x)dP(\omega).
\end{equation}
In particular, we obtain
\begin{equation}\label{eq_conv_main_theorem}
  E^*g=\lim\limits_{n\to\infty}E(g\circ \alpha_n)\equiv \lim\limits_{n\to\infty}\int\limits_{\Omega^*}gdP\alpha_n^{-1},
\end{equation}
which holds for any bounded, $P^*$-a.s. continuous function $g$, see  Remark \ref{rem-narrow-topology}. Here
$E$ resp. $E^*$ is the expectation with respect to the probability measure $P$ resp. $P^*$ and the measure $P\alpha_n^{-1}$ is defined by $
  P\alpha^{-1}(A\times B)=\{\omega\in A : \ \alpha(\omega)\in B\}
  $, see  Remark \ref{rem_narrow_conv}.

By Theorem \ref{th-extension-operators-general}, the operator $h$ admits a unique LC extension $h^*: \ \mathcal Pa^*(X)\to \mathcal Pa^*(X)$. We claim that the random point
\begin{equation}\label{eq_x_*}
x^*: \ \Omega^*\to X \ \ \mbox{defined as} \ \ x^*(\omega, x)=x,\ \  \mbox{where} \ \ x\in X=Z,
\end{equation}
is a fixed point of the operator $h^*$, i.e. $h^*x^*=x^*$ $P^*$-a.s.

First of all, let us check that $x^*\in \mathcal Pa^*(V)$ $P^*$-a.s. Indeed, for any $t\in T$ and any $B\in \mbox{Bor}\, (X_t)$, the set
 $$
  \{(\omega, x)\in \Omega\times X: \ (p^tx^*)(\omega, x)\in B\}
=\{(\omega, x)\in \Omega\times X: \ p^t(x)\in B\} \\
=\Omega\times (p^t)^{-1}(B)
$$
belongs to the $\sigma$-algebra $\mathcal F_t^*$, see Definition \ref{def-Young-extension}. Thus, $x^*\in \mathcal Pa^*(X)$. To see that $x^*$ takes values in $V$ $P^*$-a.s. we observe that by construction
$\alpha_n\in V$ a.s., so that $P\alpha^{-1}_n (\Omega\times V)=1$ for all $n\in N$. Therefore, by the Portmenteau theorem \cite[p.26]{Crauel}
$
P^*(\Omega\times V)\ge 1,
$
as $V$ is closed in $X$, which means that $P^*(\Omega\times (X-V))=0$ and $P^*\{x^*\notin V\}=0$.

Below we use the metrics
$$
d_X(x,y)=E(\min\{\|x-y\|_X\};1), \ \ \ \ d_X^*(x,y)=E^*(\min\{\|x-y\|_X\};1)
$$
on the spaces $\mathcal Pa(X)$ and $\mathcal Pa^*(X)$, respectively.

Let $H=h-id$, where $id$ is the identity map on $Pa(X)$. From (\ref{eq-fixed-point-general-2}) we have
\begin{equation}\label{eq-fixed-point-general-2}
  \|H\alpha_n\|_X\to 0 \ \ \ \mbox{in probability} \ P \ \  \mbox{as} \ n\to\infty.
\end{equation}
Evidently, $H^*\equiv h^*-id^*: \ \mathcal Pa^*(X)\to Pa^*(X)$ is the LC extension of the LC operator $H$, where $id^*$ is the identity map on  $Pa^*(X)$. We shall prove that $H^*x^*=0$ $P^*$-a.s.

The operator $H=h-id$ is uniformly continuous on  tight subsets of the space $\mathcal Pa(X)$. By Theorem \ref{th-uniform-cont}, there exists a function $O(\gamma)>0$ ($\gamma>0$), $\lim\limits_{\gamma\to +0}O(\gamma)=0$, such that for any compact $Q\subset X$ and any $\varepsilon>0$ there is $\delta>0$ satisfying the property:
\begin{equation}\label{eq-fixed-point-general-3}
  d_X(x,y)<\delta \ \ \ \mbox{implies} \ \ \
  d_X(Hx,Hy)<\varepsilon+O(\gamma)
   \end{equation}
for all $x, y \in \mathcal Pa(X), \ P\{x\notin Q\}<\gamma, \ P\{y\notin Q\}<\gamma.$

By Theorem \ref{th-approx-simple-points}, there exists a sequence $\{x_\nu\}\subset \mathcal{S}a(\mathcal X, \mathcal B^*)$, for which $d^*_X(x_\nu,x^*)\to 0$ as $\nu\to\infty$, so that
\begin{equation}\label{eq-fixed-point-general-4}
  d^*_X(x_\nu, x^*)<\delta \ \ \ \mbox{for all} \ \ \ \nu\ge \nu_1.
\end{equation}
Pick an arbitrary $\varepsilon>0$, $\gamma>0$ and find $\delta$ satisfying (\ref{eq-fixed-point-general-3}). Due to tightness of the sets $\{\alpha_n: \ n\in N\}\subset \mathcal Pa(X)$ and  $\{x_\nu: \ \nu\in N\}\subset \mathcal Pa^*(X)$ there is a convex compact $K_\gamma\subset X$ such that
\begin{equation}\label{eq-fixed-point-general-3a}
  P\{\alpha_n\notin K_\gamma\}<\gamma \ \  \mbox{for any} \ \ n\in N
 \end{equation}
    {and}
   \begin{equation}\label{eq-fixed-point-general-3b}
  P^*\{x_\nu\notin K_\gamma \}<\gamma \ \  \mbox{for any} \ \ \nu\in N.
\end{equation}

By continuity of the operator $H^*$, the sequence $\{H^*x_\nu\}$ converges to $H^*x^*$ in the space $\mathcal Pa^*(X)$. Hence
there exists $m_0\in N$ for which
\begin{equation}\label{eq-fixed-point-general-5}
  d^*_X(H^*x^*, H^*x_\nu)<\varepsilon \ \ \ \mbox{for all} \ \ \ \nu\ge \nu_2.
\end{equation}
Let $m=\max\{\nu_1; \nu_2\}$ and put $y=x_m$.
The set  $\{y\notin K_\gamma \}$ is a $P^*$-continuity set, as $y$ is $P^*$-a.s. continuous. Therefore, by (\ref{eq-fixed-point-general-3b}) and Remark \ref{rem-narrow-topology}
\begin{equation}\label{eq-fixed-point-general-6}
  P\{y\circ \alpha_n\notin K_\gamma\}=P\alpha_n^{-1}\{x_m\notin K_\gamma \}<\gamma  \ \ \ \mbox{for all} \ \ \ n\ge n_1.
\end{equation}
Let $E^n$ be the expectation with respect to the measure $P\alpha_n^{-1}$. As the function $\min\{\|y-x^*\|_X; 1\}$ is $P^*$-a.s. continuous, we get
\begin{equation}\label{eq-fixed-point-general-7}
\begin{array}{l}
  d_X(y\circ \alpha_n,\alpha_n)= E\min\{\|y\circ \alpha_n-\alpha_n\|_X; 1\} \\
 =E\min\{\|y\circ \alpha_n-x^*\circ\alpha_n\|_X; 1\} =E^n \min\{\|y-x^*\|_X; 1\}<\delta \ \ \ \mbox{for all} \ \ \ n\ge n_2
\end{array}
\end{equation}
 by (\ref{eq-fixed-point-general-4}), where $x_m=y$.
Combining  (\ref{eq-fixed-point-general-3a}), (\ref{eq-fixed-point-general-6}) and (\ref{eq-fixed-point-general-7}) and applying the estimate (\ref{eq-fixed-point-general-3}) yield
$$
  d_X(H(y\circ \alpha_n), H\alpha_n)<\varepsilon+O(\gamma) \ \ \ \mbox{for all} \ \ \ n\ge \max\{n_1; n_2\}.
$$
Minding (\ref{eq-fixed-point-general-2}) we find $n_3\in N$ such that $d_X(H\alpha_n,0)<\varepsilon$ for all $n\ge n_3$. Therefore
\begin{equation}\label{eq-fixed-point-general-8}
  d_X(H(y\circ \alpha_n), 0)<2\varepsilon+O(\gamma) \ \ \ \mbox{for all} \ \ \ n\ge \max\{n_1; n_2; n_3\}.
\end{equation}
By construction, $y=x_m\in \mathcal{S}a(\mathcal X, \mathcal B^*)$. Therefore, $y$ can be represented as
 $y=\sum\limits_{i=1}^sc_iI_{A_i}$ for some disjunct $P^*$-continuity sets $A_i\in \mathcal F\otimes \mbox{Bor}\, (X)$, $\bigcup\limits_{i=1}^sA_i=\Omega^*$, $c_i\in \mathcal Pa(X)$ ($i=1,...,s$), see Definition \ref{def-cont-adapted-points}. Then, by the representation
(\ref{eq-def-h-0}), we obtain
  $H^*y=\sum\limits_{i=1}^sh(c_i)I_{A_i}$ $P^*$-a.s. On the other hand,
  $$
  H(y\circ \alpha_n)=h\left(\sum\limits_{i=1}^sc_iI_{\{\alpha_n\in A_i\}}\right)=\sum\limits_{i=1}^sHc_iI_{\{\alpha_n\in A_i\}} \ \ P-\mbox{a.s.}
  $$
  by the property of locality of the operator $H$.
  As the random point $H^*y$ is $P^*$-a.s. continuous,  we obtain from (\ref{eq-fixed-point-general-8}) that
  $$
  \begin{array}{l}
  d^*_X(H^*y, 0)=E^* \min\{\|\sum \limits_{i=1}^sh(c_i)I_{A_i}\|_Y;1\}\\
  =\lim\limits_{n\to\infty}E^n \min\{\|\sum\limits_{i=1}^sh(c_i)I_{A_i}\|_Y;1\}
  =\lim\limits_{n\to\infty}E\min\{\|H(y\circ \alpha_n)\|_Y;1\}
    \le 2\varepsilon+O(\gamma)
  \end{array}
    $$
  and minding (\ref{eq-fixed-point-general-4}), where $x_\nu=y$, we arrive at the estimate
  $d^*_X(H^*x^*, 0)< 3\varepsilon+O(\gamma)$. As $\varepsilon>0 $ and $\gamma>0$ were arbitrary and $\lim\limits_{\gamma\to +0}O(\gamma)=0$, we see that  $d^*_X(H^*x^*, 0)=0$, so that $H^*x^*=0$ and $h^*x^*=x^*$ $P^*$-a.s. This completes the proof of the first part of the theorem.

\textit{Existence of a strong fixed point.}

 Let $x^*$ be the only weak fixed point of the operator $h$ defined on a Young expansion $\mathcal B^*=(\Omega^*, \mathcal F^*, \mathcal F^*_t, P^*)$, where $\Omega^*=\Omega\times Z$ for some separable Fr\'{e}chet space $Z$.

 Consider
 two copies of the Young expansion $\mathcal B^{*}$:
 $$
 \mathcal B^{i}=(\Omega^{i}, \mathcal F^{i}, \mathcal F^{i}_t, P^{i})=(\Omega^{*}, \mathcal F^{*}, \mathcal F^{*}_t, P^{*})=\mathcal B^{*} \ \ \ (i=1,2),
 $$
 so that, in particular, $Z=Z_1=Z_2$, as well as their product
 $$
 \mathcal B^{**}=(\Omega^{**}, \mathcal F^{**}, \mathcal F^{**}_t, P^{**}),
 $$
 defined by
 \begin{enumerate}
   \item $\Omega^{**}=\Omega\times Z\times Z$;
   \item $dP^{**}(\omega,z_1, z_2)=dP^{**}_\omega(z_1, z_2)dP(\omega),$ {where}
   $P^{**}_\omega(z_1, z_2)=P^*_\omega(z_1)\otimes P^*_\omega(z_2)$, $z_i\in Z$ ($i=1,2$);
   \item $\mathcal F^{**}$ is the $P^{**}$-completion of the $\sigma$-algebra $\mathcal F\otimes \mbox{Bor}\,(Z)\otimes \mbox{Bor}\,(Z)$
   \item $\mathcal F^{**}_t$ is the $P^{**}$-completion of the $\sigma$-algebra $\mathcal F_t\otimes (q^{t})^{-1}(\mbox{Bor}\,(Z_t))\otimes (q^{t})^{-1}(\mbox{Bor}\,(Z_t))$ for any $t\in T,$ where $Z_t=q^t(Z),$ $q^t=q^{tb},$
 \end{enumerate}
  Denote
  $$
  \mathcal Pa^i (X)\equiv\mathcal Pa(\mathcal X, \mathcal B^i)=\mathcal Pa^*(X)
  \ \ \
  \mbox{and} \ \ \
 \mathcal Pa^{**}(X)\equiv \mathcal P(\mathcal X, \mathcal B^{**})
 $$
 and let $h^i$ and $h^{**}$ be the LC extensions of the operator $h$ to the spaces $\mathcal Pa^i (X)$ and $\mathcal Pa^{**}(X)$, respectively. Due to the uniqueness of LC extensions (Theorem \ref{prop-unique-local-extension}), $h^{**}$ is an LC extension of each of the LC operators $h^i: \mathcal Pa^i (X)\to \mathcal Pa^i (X)$ to the space $\mathcal Pa^{**}(X)$.

 By construction, $h^ix_i=x_i$ $P^i$-a.s.  ($i=1,2$), where $x_i$ is a copy of $x^*$ if $Z$ is replaced by $Z^i$. Put $x^{**}_i(\omega, z_1, z_2)=x_i(\omega, z_i)$ and observe that $x^{**}_i\in \mathcal Pa^i(X)$. Therefore,
 $$
 h^{**}x^{**}_i=h^ix_i=x_i=x^{**}_i \ \ \ P^{**}-\mbox{a.s.} \ \ \ (i=1,2).
 $$
 By uniqueness, $x^{**}_1=x^{**}_2$ $P^{**}$-a.s., so that $$(P^*_\omega\otimes P^*_\omega)\{(z_1, z_2): x^*(\omega, z_1)=x^*(\omega, z_2)\}=1 \ \ \ P-\mbox{a.s.}$$
  Hence,
  there must exist $\alpha: \Omega\to X$ such that $P^*_\omega=\delta_{\alpha(\omega)}$ $P$-a.s.
    To verify that $\alpha\in\mathcal Pa(X)$ we note that $P^*_\omega$ is a Young measure, so that it is the limit (in the narrow topology) of the sequence of the Dirac measures $\{\delta_{\alpha_n(\omega)}\}$ for some $\alpha_n\in\mathcal Pa(X)$. Then
  $\alpha=\lim\limits_{n\to\infty}\alpha_n$ in the topology of the space $\mathcal Pa(X)$, which proves that $\alpha\in\mathcal Pa(X)$. Finally,
  $$
  \|H\alpha\|_X=\lim\limits_{n\to\infty}\|H\alpha_n\|_X=\lim\limits_{n\to\infty}\|h\alpha_n-\alpha_n\|_X=0
  $$
  in probability $P$ due to continuity of $H=h-id$. Thus, $h\alpha=\alpha$ $P$-a.s. On the other hand,
  $
  \alpha = x^{*} \ \ \ P^*-\mbox{a.s.}  $ by construction. Therefore, $x^*$ is $P^*$-equivalent to $\alpha$, which means the weak solution $x^*$ is, in fact, strong.
   The theorem is proven.
\end{proof}
For $V=X$ we obtain the following generalization of Theorem \ref{th-fixed-point}:
\begin{corollary}\label{cor-fixed-point-general}
  Let the projective system of separable Banach spaces $\mathcal X=(X_t, p^{ut}, T)$ satisfy Property (${\Pi}$) and
  $h: \mathcal{P}a (X)\to \mathcal{P}a (X)$ be a local and tight-range operator. Then $h$ has at least one weak fixed point $x^*\in \mathcal Pa^*(X)$ for some  Young expansion $\mathcal B^*$ of the stochastic basis $\mathcal B$.

  If for any Young expansion $\mathcal B^*$ of $\mathcal B$, the operator $h$ has at most one weak fixed point in $\mathcal Pa^*(X)$, then each weak fixed point of the operator $h$ will be equivalent to a unique strong, i.e. belonging to the space $\mathcal{P}a (\mathcal X, \mathcal B)$, solution of the equation $hx=x$.
\end{corollary}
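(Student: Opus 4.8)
The plan is to obtain the corollary as the special case $V=X$ of Theorem \ref{th-fixed-point-general}, so the only work is to check that the hypotheses of that theorem are met and that the two conclusions coincide. First I would record the two elementary identities $\mathcal{P}a(\mathcal X, \mathcal B)\cap \mathcal P(X, \mathcal S)=\mathcal Pa(X)$ and $\mathcal{P}a(\mathcal X, \mathcal B^*)\cap \mathcal P(X, \mathcal S^*)=\mathcal Pa^*(X)$, together with the trivial fact that $X$ is a convex, closed, nonempty subset of itself. These reduce the intersections appearing in Theorem \ref{th-fixed-point-general} to the full domain and its expansion, and they show that the localization of the weak fixed point to $V$ in the theorem's conclusion imposes no restriction when $V=X$.

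Next I would verify that ``local and tight-range'' delivers exactly the hypotheses of the theorem. Locality is assumed outright. Condition (2) in Definition \ref{def-tight-oper} states that a tight-range operator is uniformly continuous on every tight subset of its domain, which is precisely the regularity required of $h$ in Theorem \ref{th-fixed-point-general}. For the mapping hypothesis of part (1), I would combine condition (1) of Definition \ref{def-tight-oper}, which places the image $h(\mathcal Pa(X))$ inside a tight subset of $\mathcal P(X, \mathcal S)$, with the fact that the codomain of $h$ is $\mathcal Pa(X)$; since tightness is inherited by subsets, $h(\mathcal Pa(X))$ is then a tight subset of $\mathcal Pa(X)$ itself, i.e.\ $h$ maps $\mathcal Pa(X)\cap \mathcal P(X, \mathcal S)$ into its tight subset. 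Part (1) of Theorem \ref{th-fixed-point-general} now applies and produces a Young expansion $\mathcal B^*$ and a weak fixed point $x^*\in \mathcal Pa^*(X)$.

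Finally, for the uniqueness-and-strongness clause I would reconcile the two formulations of the hypothesis. By Definition \ref{def weak sol abstract}, a weak fixed point of $h$ lying in $\mathcal Pa^*(X)$ is by definition a point with $h^*x^*=x^*$ $P^*$-a.s., where $h^*$ is the LC extension of $h$ (unique by Theorem \ref{prop-unique-local-extension}); hence ``$h$ has at most one weak fixed point in $\mathcal Pa^*(X)$'' is verbatim the statement ``the associated LC extension $h^*$ has at most one fixed point in $\mathcal Pa^*(X)$'', which is the hypothesis of part (2) of the theorem. Invoking part (2) then yields that each weak fixed point is equivalent to a unique strong solution of $hx=x$, completing the argument. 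I do not expect any genuine obstacle here: the entire content is a bookkeeping match of the defining clauses of a tight-range operator to the separate hypotheses of the theorem, and the only point needing a moment's care is the translation between ``weak fixed point of $h$'' and ``fixed point of $h^*$'', which is immediate from the definitions.
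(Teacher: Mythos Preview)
Your proposal is correct and matches the paper's approach exactly: the corollary is stated in the paper as the special case $V=X$ of Theorem \ref{th-fixed-point-general}, and your bookkeeping (that tight-range supplies both the uniform continuity on tight subsets and the tight image of the full domain, and that the uniqueness hypotheses correspond via Definition \ref{def weak sol abstract} and Theorem \ref{prop-unique-local-extension}) is precisely what is needed to make that specialization explicit.
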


For some applications of this theorem see Subsection \ref{sec_ex_weak_sol} in Appendix D.

\appendix
\numberwithin{equation}{section}

\section{Overview of the notation and definitions}\label{sec-Notation}

\begin{itemize}
  \item $I_A$ is the indicator of a set $A$, i.e. $I_A(u)=1$ if $u\in A$ and  $I_A(u)=0$ otherwise.
      \item $T$ is a linearly ordered set with a maximal element $b\in T$, see Subsection \ref{sec-adapted-points}.
  \item $\mbox{Bor}(M)$ is the $\sigma$-algebra of all Borel subsets of a separable metric space $M$.
  \item $\mathcal G_1\otimes \mathcal G_2$ is the product of the $\sigma$-algebras $\mathcal G_i$ ($i=1,2$).
  \item $\mathcal S=(\Omega , {\mathcal F}, P)$ is a complete probability space, see (\ref{eq-probability-space}).
  \item $\mathcal B =(\Omega , {\mathcal F}, ({\mathcal F}_t)_{t\in {T}},P)$
 is a stochastic basis on the probability space $\mathcal S$, see (\ref{eq-stoch-basis}).
         \item ${\mathcal P}(M, \mathcal S)$ is the set of all (equivalence classes) of random points on the probability space $\mathcal S$ with values in a separable metric space $M$; the topology on ${\mathcal P}(M, \mathcal S)$ is defined by convergence in probability; this topology is metrizable by the metric $d_M(x,y)=E\min\{\rho(x,y); 1\}$ ($\rho$ is a metric on $M$); ${\mathcal P}(M, \mathcal S)$ can be simplified to ${\mathcal P}(M)$ if $\mathcal S$ is fixed, see Section \ref{sec-Prelim}.
           \item $\mathcal X=(X_t, p^{ut}, {T})$ is a projective system of separable Banach spaces $X_t$, where $p^{ut}:\ X_t\to X_u$ ($t, u\in{T}, \ t\ge u$) are linear continuous surjective maps satisfying the property $p^{vu}\circ p^{ut}=p^{vt} \ \ \mbox{for all} \ \ t,u,v\in{T}, \ t\ge u\ge v;$ the notational agreements throughout the paper: $X_b\equiv X$, $p^{tb}\equiv p^t$, see Definition \ref{def-projective-system}.
               \item The Euclidean projective system $\mathcal E = (E_i, p^{ji}, T_m) $ ($T_m\equiv \{0,...,m\}$) is generated by the $m$-dimensional Euclidean space $E=E_m$,  a decreasing sequence of its linear subspaces $E_i$ ($\mbox{dim}\, E_i=i$) and orthogonal projections $p^{ji}: E_{i}\to E_{j}$, see Definition \ref{def-Euclidean}.
      \item $\mathcal Z=(Z_t, q^{ut}, {T})$ is a projective system of separable Fr\'{e}chet spaces $Z_t$, where the bonding maps $q^{ut}$ satisfy the same property as $p^{ut}$ above; the notational agreement: $Z_b\equiv Z$, $q^{tb}\equiv q^t$; $\mathcal Z$ is used to construct Young expansions of stochastic bases, see Definition \ref{def-Young-extension}.
      \item Let  $\mathcal L=(L_t, l^{ut}, {T})$ be a projective system of separable Fr\'{e}chet spaces ($L=L_b$). We call $\phi: L\to L$  {a generalized Volterra operator (map)} with respect to $\mathcal L$ if it generates a family of operators $\phi^t: L_t\to L_t$ ($t\in {T}$) satisfying the properties $\phi=\phi^b$ and $l^{ut}\circ \phi^t=\phi^u\circ l^{ut}$ for all $t, u\in{T}, t\ge u$, see Definition \ref{def-Volterra}.
   \item The projective system $\mathcal L=(L_t, l^{ut}, {T})$ of separable Fr\'{e}chet spaces satisfies {Property $(\Pi)$} if  there exists a sequence $\pi_n: L\to L \ (L=L_b)$ of linear, continuous and finite dimensional generalized Volterra maps, which strongly converges to the identity map in $L$ as $n\to\infty,$
see Definition \ref{def-property-Pi}.
\item A random point $x\in\mathcal P(X, \mathcal S)$ is called adapted with respect to the stochastic basis $\mathcal B$ and the projective system $\mathcal X$ if $p^t(x): \Omega\to X_t$ is $\mathcal{F}_t$-measurable for all $t\in{T}$,  see Definition \ref{def adapted points}.
  \item $\mathcal{P}a(\mathcal X, \mathcal B)$ is the linear topological subspace of the space $\mathcal P(X, \mathcal S)$ consisting of all (equivalence classes of)     adapted points with respect to $\mathcal B$ and  $\mathcal X$; if $\mathcal X$ and/or $\mathcal B$ are fixed, then the notation $\mathcal{P}a(\mathcal X, \mathcal B)$ can be simplified to $\mathcal{P}a(X, \mathcal B)$ or $\mathcal{P}a(X).$
  \item $\mathcal Sa(\mathcal X, \mathcal B^*)$ consists of all $P^*$-a.s. continuous, simple random points $x: \Omega^*\to X$, see Definition \ref{def adapted points}.
  \item A local operator $h$ is characterized by the property
$
x|_A=y|_A \ \mbox{a.s.} \ \Rightarrow
\ hx|_A=hy|_A  \ \mbox{a.s.}
$
for all $A\subset \Omega$, see Definition \ref{def-local}.
  \item The superposition operator $h_f$ is {defined by} $(h_fx)(\omega)=f(\omega, x(\omega)),$ where $f: \Omega\times X\to Y$ is a given random function; any superposition operator is local.
      \item An LC operator is a local operator which is continuous in probability; the superposition operator $h_f$ is an LC operator if $f$ satisfies the Carath\'{e}odory conditions, see Section \ref{sec-Prelim}.

      \item A set ${\mathcal K}\subset {\mathcal P}(X, \mathcal F)$ is called {tight}
 if for any $\epsilon >0$ there exists a compact set $Q\subset X$ such that
$P\{\omega \,:\,x(\omega ) \notin Q\}<\epsilon $ whenever $x\in
{\mathcal K}$, see Definition \ref{def-tight-set}.
  \item
An operator
 is called tight (resp. tight-range) if it 1) maps bounded subsets of its domain (resp. the entire domain) into tight subsets of its range and 2)
it is uniformly continuous on
tight subsets of its domain, see Definition \ref{def-tight-oper}.
\item Given $\alpha\in\mathcal P(\mathcal F, Z)$, the measure $P\alpha^{-1}$ on the $\sigma$-algebra $\mathcal F\otimes \mbox{Bor}(Z)$ is  defined by
  $$
  P\alpha^{-1}(A\times B)=\{\omega\in A : \ \alpha(\omega)\in B\}
  $$
\item $Pr_\Omega(Z)$ is the set of all random measures on $\mbox{Bor}(Z)$ with the marginal $P$; $Z$ is a Polish space, see Subsection \ref{sec-extensions-bases}.
\item The narrow topology on the set $Pr_\Omega(Z)$ of all random measures $\mu_\omega$ on $\mbox{Bor}(Z)$ with the marginal $P$ is generated by the maps
$$
\mu\mapsto\mu(f)=E\int_Zf(\omega, z)d\mu_\omega(z),
$$
where $f:\Omega \times Z\to R$ is an arbitrary bounded Carath\'{e}odory function, see Subsection \ref{sec-extensions-bases}.
     \item Given a projective system of separable Fr\'{e}chet spaces $\mathcal Z=(Z_t, q^{ut}, {T})$,
    a Young expansion $\mathcal B^*\!\!=\!\!(\Omega^*,{\mathcal F}^*\!,({\mathcal F}_t^*)_{t\in {T}},P^*\!)$ of the stochastic basis $\mathcal B\!\! =\!\! (\Omega ,{\mathcal F}\!,({\mathcal F}_t)_{t\in {T}}\!, P\!)$, generated by $\mathcal Z$, satisfies the following properties:  1) $\Omega^*=\Omega\times Z$, where $Z=Z_b$;
  2) $P^*_\omega$ is the limit point (in the narrow topology) of a sequence of random Dirac measures $\delta_{\alpha_\nu(\omega)}$, where
$\alpha_\nu\in\mathcal Pa(Z,\mathcal B)$; 3) $\mathcal F^*$ is the $P^*$-completion of the $\sigma$-algebra $\mathcal F\otimes (\mbox{Bor}\,(Z_t))$ and 4) $\mathcal F^*_t$ is the $P^*$-completion of the $\sigma$-algebra $\mathcal F_t\otimes (q^{t})^{-1}(\mbox{Bor}\,(Z_t))$ for any $t\in T,$ see Definition \ref{def-Young-extension}.

\end{itemize}



\section{Proof of the auxiliary results}

We start this section with some technical results.

\subsection{Lemmata}

\begin{lemma}\label{lem-isomorphism}
  Let $\mathcal B$ be a stochastic basis on the probability space $\mathcal S$ and $\mathcal X=(X_t, p^{ut}, {T})$ be a projective system of finite dimensional Banach spaces. Then, given a linear bijection $G: X\to E_m $, $E_m$ being the $m$-dimensional Euclidean space, there exists a finite stochastic basis $\mathcal B_m =(\Omega , {\mathcal F}, ({\mathcal F}_i)_{i\in {T(m)}}, P)$ on $\mathcal S$, for which the superposition operator $(h_G x)(\omega)=G(x(\omega))$ defines a linear isomorphism between the linear topological spaces $\mathcal{P}a(\mathcal X, \mathcal B)$ and $\mathcal{P}a(\mathcal E, \mathcal B_m)$, where $\mathcal E$ is a Euclidean projective system from Definition \ref{def-Euclidean}.
  \end{lemma}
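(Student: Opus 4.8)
The plan is to push the whole projective structure over to the Euclidean space $E_m$ through the bijection $G$, and to read off from the kernels $\ker p^t\subset X$ the flag and the filtration that should define $\mathcal E$ and $\mathcal B_m$. Since $X=X_b$ is $m$-dimensional, the superposition operator $h_G\colon x\mapsto G\circ x$ is a linear homeomorphism of $\mathcal P(X,\mathcal S)$ onto $\mathcal P(E_m,\mathcal S)$, with inverse $h_{G^{-1}}$; the only genuine task is to choose $\mathcal E$ and $\mathcal B_m$ so that $h_G$ carries $\mathcal{P}a(\mathcal X,\mathcal B)$ exactly onto $\mathcal{P}a(\mathcal E,\mathcal B_m)$. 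The guiding observation is that, by Definition \ref{def adapted points}, adaptedness of $x$ depends on $p^t(x)$ only up to a linear isomorphism, hence only through $\ker p^t$.

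First I would record the combinatorial skeleton. Put $d_t=\dim X_t$; since every $p^{ut}$ is surjective, $t\mapsto d_t$ is nondecreasing, $d_b=m$, and $\ker p^t\subset\ker p^u$ for $t\ge u$. Transport the kernels by setting $V_t:=\big(G(\ker p^t)\big)^{\perp}\subset E_m$, so that $\dim V_t=d_t$ and $V_t\supseteq V_u$ for $t\ge u$. Because $T$ is linearly ordered, $d_t=d_u$ forces $V_t=V_u$, so $V_t$ depends only on the value $d_t$; the distinct $V_t$ thus form a nested chain of subspaces whose dimensions are exactly the values attained by $d$. Refining this partial flag to a complete flag $E_0\subset E_1\subset\cdots\subset E_m$ with $\dim E_i=i$ and $E_{d_t}=V_t$, and taking the orthogonal projections $p^{ji}$ as bonding maps, produces a Euclidean projective system $\mathcal E$ in the sense of Definition \ref{def-Euclidean}; write $\pi^i=p^{im}$. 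For the filtration I would set $\mathcal F_i:=\bigcap_{t:\,d_t\ge i}\mathcal F_t$, which is a nondecreasing family of complete $\sigma$-subalgebras of $\mathcal F$ (completeness is inherited, and the family of indices over which one intersects shrinks as $i$ grows), so that $\mathcal B_m=(\Omega,\mathcal F,(\mathcal F_i)_{i\in T_m},P)$ is a legitimate finite stochastic basis.

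It then remains to verify that $x\in\mathcal{P}a(\mathcal X,\mathcal B)$ if and only if $Gx\in\mathcal{P}a(\mathcal E,\mathcal B_m)$. Since $\pi^{d_t}$ is the orthogonal projection onto $V_t$, the maps $\pi^{d_t}\circ G$ and $p^t$ share the kernel $\ker p^t$ and the same image dimension, hence differ by a linear isomorphism $X_t\to V_t$; thus $p^t(x)$ is $\mathcal F_t$-measurable iff $\pi^{d_t}(Gx)$ is. The equivalence of the two adaptedness conditions is now a filtration chase: if $\pi^{d_t}(Gx)$ is $\mathcal F_t$-measurable for every $t$, then for a fixed $i$ and every $t$ with $d_t\ge i$ one has $\pi^i=p^{i,d_t}\circ\pi^{d_t}$, so $\pi^i(Gx)$ is $\mathcal F_t$-measurable, whence $\mathcal F_i$-measurable; conversely, taking $i=d_t$ and using $\mathcal F_{d_t}\subseteq\mathcal F_t$ recovers the original condition. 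This mutual matching of the two filtrations is the delicate point, and it is precisely what forces the choice $\mathcal F_i=\bigcap_{d_t\ge i}\mathcal F_t$ together with the use of \emph{orthogonal} complements, so that the bonding maps compose as the projective structure demands. Finally, since $h_G$ and $h_{G^{-1}}$ are linear and continuous in probability, the established bijection $\mathcal{P}a(\mathcal X,\mathcal B)\to\mathcal{P}a(\mathcal E,\mathcal B_m)$ is a linear topological isomorphism, which completes the proof.
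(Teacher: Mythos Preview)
Your proof is correct and follows essentially the same strategy as the paper: transport the kernels $\ker p^t$ to $E_m$ via $G$, take orthogonal complements to obtain a flag, and define the finite filtration by intersecting the $\mathcal F_t$ over suitable index sets. Your formula $\mathcal F_i=\bigcap_{d_t\ge i}\mathcal F_t$ is a slight streamlining of the paper's construction: the paper first sets $\mathcal F_i=\bigcap_{d_t=i}\mathcal F_t$ on the realized dimensions $T'$ and then extends to the missing $i$ by $\mathcal F_i=\mathcal F_k$ with $k=\min\{j\in T':j>i\}$, but a short check shows the two definitions coincide (for $i\in T'$ the extra terms with $d_t>i$ are redundant because any such $t$ dominates some $s$ with $d_s=i$, and for $i\notin T'$ the index sets $\{d_t\ge i\}$ and $\{d_t\ge k\}$ agree). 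The kernel argument you use to identify measurability of $p^t(x)$ with that of $\pi^{d_t}(Gx)$ is exactly the content of the paper's isomorphism $\kappa_t=p^t|_{E_{d_t}}$.
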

\begin{proof}
First of all, we notice that $X$ can be identified with $E_m$ if we replace the bonding maps $p^t\equiv p^{tb}$ ($b$ is the maximal element in $T$) with $p^{t}\circ G^{-1}$ and leave the remaining bonding maps unchanged. In this case, $G$ becomes the identity map, and we have to prove that $\mathcal{P}a(\mathcal X, \mathcal B)=\mathcal{P}a(\mathcal E, \mathcal B_m)$ for some finite stochastic basis $\mathcal B_m$. Let us construct it. To this end, consider the nonincreasing family of subspaces $\mbox{Ker}\,p^{t}$ of the space $X$. For each $t\in T$ we define
$E_t$ to be the orthogonal complement of $\mbox{Ker}\,p^{t}$
in the space $X$. Let $T'\subset \{0,...,m\}$ be the set of indices $i$ such that there exists $E_t$ for which $i=\mbox{dim}\, E_t$. We define $E_i=E_t$ for these $t$ and $p^{ji}$
to be the induced linear maps from $E_i$ onto $%
E_j $ ($i\ge j$) defined as $\kappa_u^{-1}\circ p^{ut}\circ \kappa_t$ ($t\in T_i$, $u\in T_j$), where $\kappa_s\equiv p^s|_{E_k}$ ($s\in T_k$) is the linear isomorphism between the spaces $E_k$ and $X_s$ of the same dimension.

Changing the basis in $E_m$ we may always assume that
  $E_i=\{(x_1,...,x_i, 0, ...,0)\}$ and $p^{ji}$ ($i, j\in T'$, $i\ge j$) is the orthogonal projection, which removes the coordinates $(x_{j+1},..., x_{i})$.

This defines a Euclidean projective system $\mathcal E'=(E_i, p^{ji}, T')$.

Putting
\[
{\mathcal F}_i\equiv \bigcap_{\mbox{dim}\,X_t=i}{\mathcal F}_t
\]
results in the finite stochastic basis $\mathcal B' =(\Omega , {\mathcal F}, ({\mathcal F}_i)_{i\in {T}'}, P)$.

 By construction, the map $p^t|_{E_i}$ is a linear isomorphism onto $X_t$ if $\mbox{dim}\,X_t=i$. Therefore $p^tx$ is $\mathcal F_t$-measurable for all $t$ satisfying $\mbox{dim}\,X_t=i$ if and only if $p^ix$ is $\mathcal F_i$-measurable. Thus, we have proven that $x\in \mathcal{P}a(\mathcal X, \mathcal B)$ if and only if $p^ix$  is $\mathcal F_i$-measurable for any $i\in T'.$ Hence $\mathcal{P}a(\mathcal X, \mathcal B)=\mathcal{P}a(\mathcal E, \mathcal B').$

  To extend the projective system $\mathcal E'$ from the subset $T'\subset \{0,1,...,m\}$ to the entire set ${T}_m=\{0,1,...,m\}$ let us take any $k\in T_m-T'$, put $E_k=\{(x_1,...,x_k, 0, ...,0)\}$ and define $p^{lk}: E_k\to E_l$ ($k, l\in T_m, k\ge l$) to be the orthogonal projection removing the coordinates  $(x_{l+1},..., x_{k})$. This yields the projective system $\mathcal E_m=(E_i, p^{ji}, T_m)$. The corresponding filtration $(\mathcal F_i)_{i\in T_m}$ coincides with the previous one if $i\in T'$, while for $i\in T_m-T'$ we
   put $\mathcal F_i=\mathcal F_k$,  where $k$ is the least number from $T'$ which exceeds $i$.

  Evidently, for any $x:\ \Omega \to E$, the random point $p^i(x)\equiv p^{im}(x)$  is $\mathcal F_i$-measurable for any $i\in T(m)$ if and only if $p^ix$  is $\mathcal F_i$-measurable for any $i\in \bar T$, so that
  $
  \mathcal Pa(\mathcal X, \mathcal B)=\mathcal Pa(\mathcal E', \mathcal B')=\mathcal Pa(\mathcal E_m, {\mathcal B}_m).
  $
\end{proof}

\begin{lemma}\label{lem-projection} Let $\mathcal X=(X_t, p^{ut}, {T})$ be a projective system of finite dimensional Banach spaces and $\mathcal B$ be a stochastic basis on a probability space $\mathcal S$.
 Then for any nonempty, convex and compact subset $U$ of $X=X_b$ there exists a continuous projection $\phi: \, X\to U$, for which
  $h_\phi(\mathcal{P}a(\mathcal X, \mathcal B))=\mathcal{P}a(\mathcal X, \mathcal B)\cap \mathcal P(U, \mathcal S).$
\end{lemma}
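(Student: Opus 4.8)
The plan is to pass to Euclidean coordinates and then build the projection one coordinate at a time, respecting the flag so that the resulting map is a generalized Volterra map in the sense of Definition \ref{def-Volterra}. First I would invoke Lemma \ref{lem-isomorphism} to replace $\mathcal X$ by a Euclidean projective system $\mathcal E_m=(E_i,p^{ji},T_m)$ with $E_i=\{(x_1,\dots,x_i,0,\dots,0)\}$ and $p^{ji}$ the coordinate-forgetting orthogonal projections; under this identification $\mathcal{P}a(\mathcal X,\mathcal B)=\mathcal{P}a(\mathcal E_m,\mathcal B_m)$, the set $U$ becomes a convex compact subset of $E_m$, and it suffices to produce a continuous projection $\phi$ onto $U$ whose coordinate functions are \emph{triangular}, i.e. $\phi_k$ depends only on $x_1,\dots,x_k$. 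Triangularity is exactly the statement that $p^{jm}\circ\phi=\phi^{j}\circ p^{jm}$ for the induced maps $\phi^{j}$, so $\phi$ is a generalized Volterra map and, by Remark \ref{rem-Volterra}, the superposition operator $h_\phi$ carries adapted points to adapted points.

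Set $U_i=p^{im}(U)\subset E_i$; each $U_i$ is convex and compact and $p^{ji}(U_i)=U_j$. I would define the induced maps $\phi^{i}:E_i\to U_i$ recursively, starting with the trivial $\phi^{0}$. Given a continuous retraction $\phi^{i-1}:E_{i-1}\to U_{i-1}$, write a point of $E_i$ as $(x',s)$ with $x'\in E_{i-1}$, $s\in R$, put $y=\phi^{i-1}(x')$, and let $[a_i(y),b_i(y)]$ be the fibre $\{s:(y,s)\in U_i\}$, a compact interval by convexity of $U_i$. Then set $\phi^{i}(x',s)=(y,\operatorname{clamp}(s;a_i(y),b_i(y)))$, where $\operatorname{clamp}$ denotes the nearest-point map onto the interval. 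By construction $\phi^{i}$ takes values in $U_i$; if $(x',s)\in U_i$ then $x'\in U_{i-1}$, so $\phi^{i-1}(x')=x'$ and $s$ already lies in its own fibre, whence $\phi^{i}(x',s)=(x',s)$, so $\phi^{i}$ is a retraction; and the first $i-1$ coordinates of $\phi^{i}$ equal $\phi^{i-1}(x')$, which gives at once $p^{i-1,i}\circ\phi^{i}=\phi^{i-1}\circ p^{i-1,i}$, i.e. the Volterra compatibility. Taking $\phi=\phi^{m}$ yields a triangular retraction onto $U$.

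Granting continuity of $\phi$, the conclusion follows quickly. The Volterra property together with Remark \ref{rem-Volterra} gives $h_\phi(\mathcal{P}a(\mathcal X,\mathcal B))\subseteq\mathcal{P}a(\mathcal X,\mathcal B)$, and since $\phi$ maps into $U$ the image lies in $\mathcal{P}a(\mathcal X,\mathcal B)\cap\mathcal P(U,\mathcal S)$. Conversely, as $\phi$ fixes $U$ pointwise, $h_\phi x=x$ for every $x\in\mathcal{P}a(\mathcal X,\mathcal B)\cap\mathcal P(U,\mathcal S)$, which yields the reverse inclusion and hence the claimed equality.

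The main obstacle is the continuity of $\phi$, which reduces to continuity of the fibre endpoints $y\mapsto a_i(y)$ and $y\mapsto b_i(y)$ on all of $U_{i-1}$ (the map $\operatorname{clamp}(s;a,b)$ is jointly continuous wherever $a\le b$, and $\phi^{i-1}$ is continuous by the induction hypothesis). By convexity of $U_i$ the upper endpoint $b_i$ is concave and the lower endpoint $a_i$ is convex, so both are automatically continuous on the relative interior of $U_{i-1}$; the delicate point is their behaviour on $\partial U_{i-1}$, where one must exploit the compactness (closedness) of $U_i$ to control the one-sided limits and to ensure that the fibre does not collapse discontinuously as $y$ approaches the boundary of its projection. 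This boundary analysis is the crux of the argument and the step I would spend the most care on.
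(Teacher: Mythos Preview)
Your approach is essentially the same as the paper's: reduce via Lemma~\ref{lem-isomorphism} to a Euclidean flag, then build the projection coordinate by coordinate using fibrewise clamping, and finally transport back. The paper simply asserts that the fibre endpoints $a(\cdot)\le b(\cdot)$ are continuous on $U_{i-1}$ ``as $U_i$ is convex and compact'' and does not dwell on the boundary issue you flag; your caution there is warranted but the fact is standard (closedness of $U_i$ gives upper/lower semicontinuity of $b_i$/$a_i$, and convexity gives the opposite semicontinuity via concavity/convexity of these functions, yielding continuity on all of $U_{i-1}$). One minor point: the paper writes the clamp interval as $[a(x^{k-1}),b(x^{k-1})]$ with $a,b$ defined only on $W_{k-1}$, which is a slight notational slip---your version $[a_i(y),b_i(y)]$ with $y=\phi^{i-1}(x')$ is the intended meaning and is cleaner.
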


\begin{proof}

 Step 1. We first replace $\mathcal X$ and $\mathcal B$ with the Euclidean projective system $\mathcal E=(E_i, p^{ji}, T_m)$ and a finite stochastic basis $\mathcal B_m$. The set $U\subset X$ will be at this step replaced by a nonempty, convex and compact subset $W\subset E\equiv E_m$.

 Redefining the coordinates we may assume that $E_i\!\!=\!\!\{\!(x_1,...,x_i, 0, ...,0)\!\}$ and $p^{ji}$ are the orthogonal projection, which removes the coordinates $(x_{j+1},..., x_{i})$.
 Let $p^i\equiv p^{im}$, $W_i=p^{i}(V)$ and construct a Volterra projection $\psi^i: \ E_i\to W_i$ by induction. For $i=1$, the set $W_1$ is a closed, bounded interval $[a,b]$, so that we simply put $\psi^1=\pi_{[a,b]}$, where
  $$\pi_{[a,b]}(x_1)=x_1 \ \mbox{if} \ x_1\in [a,b], \ \pi_{[a,b]}(x_1)=a \ \mbox{if} \ x_1< a \ \mbox{and} \  \pi_{[a,b]}(x_1)=b \ \mbox{if} \ x_1>b.$$
  Assuming that $\psi^{k-1}: \, E_{k-1}\to W_{k-1}$ is constructed, we observe that for each $x^{k-1}\in W_{k-1}$ the set $\{(x^{k-1}, x_k)\}\cap W_k$ is again a closed, bounded interval $[a(x^{k-1}), b(x^{k-1})]$, where the functions $a(\cdot) \le b(\cdot)$ are continuous on $W_{k-1}$, as $W_k$ is convex and compact. Put $$\psi^k((x^{k-1}, x_k))=
  \left(\psi^{k-1}(x^{k-1}), \pi_{[a(x^{k-1}), b(x^{k-1})]}(x_k)\right).$$
  Then $\psi^k: E_k\to W_k$ is continuous and by construction satisfies $p^{k-1,k}\circ\psi^k=\psi^{k-1}\circ p^{k-1,k}$. Therefore, $\psi^k$ is Volterra, and this completes the induction argument. Note that the superposition operator $h_\psi$ maps adapted points into adapted points, see Remark \ref{rem-Volterra}. Thus, we have proven the lemma for the case of $\mathcal E$ and $\mathcal B_m.$

  Step 2. Applying Lemma \ref{lem-isomorphism} we can reduce the general case to the one considered in step 1. Assume that the linear map $G: X \to E_m$ induces the linear topological isomorphism $h_G: \ \mathcal Pa(\mathcal X, \mathcal B)\to \mathcal Pa(\mathcal E, \mathcal B_m)$, put $W=G(U)\subset E$ and define $\phi=G^{-1}\circ \psi\circ G$, where $\psi: E\to W$ is a Volterra projection. By construction, $\phi$ is a continuous projection from $X$ onto $U$. Note that $h_\phi=h_{G^{-1}}\circ h_\psi\circ h_G=h^{-1}_G\circ h_\psi\circ h_G$. As the mapping $h_\psi: \mathcal Pa(\mathcal E, \mathcal B_m)\to Pa(\mathcal E, \mathcal B_m)\cap \mathcal P (W, \mathcal S)$ is a continuous projection, then so is the mapping $h_\phi: \mathcal{P}a(\mathcal X, \mathcal B)\to \mathcal{P}a(\mathcal X, \mathcal B)\cap \mathcal P(U, \mathcal S).$
   \end{proof}

\begin{lemma}\label{lem-appr-by-continuity-sets}
  Let $P^*$ be a Young probability measure defined on the $\sigma$-algebra $\mathcal F\otimes \mbox{Bor}\,(Z)$, where $Z$ is a Polish space.
  Suppose that $A=\bigcup\limits_{i=1}^s A_i$, where $A_i\in \mathcal F\otimes \mbox{Bor}\,(Z)$ are disjunct subsets and $\varepsilon >0$. Then there exist disjunct $P^*$-continuity subsets $B_i\in \mathcal F\otimes \mbox{Bor}\,(Z)$ such that $B=\bigcup\limits_{i=1}^s B_i$ and $\sum\limits_{i=1}^sP^*(A_i\triangle B_i)<\varepsilon.$
\end{lemma}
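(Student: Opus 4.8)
The plan is to reduce everything to two facts: that the family $\mathcal C$ of $P^*$-continuity sets is a sub\emph{algebra} of $\mathcal F\otimes\mbox{Bor}(Z)$, and that $\mathcal C$ contains an algebra that generates $\mathcal F\otimes\mbox{Bor}(Z)$. Together these say that every measurable set is approximable, in the pseudometric $(S,T)\mapsto P^*(S\triangle T)$, by $P^*$-continuity sets; the disjointness of the $B_i$ is then arranged afterwards by a routine disjointification. First I would record that $\mathcal C$ is an algebra. Since the random boundary is defined fiberwise in $Z$, the slicewise identities $\partial(\Omega^*-S)=\partial S$ and $\partial(S\cup T)\subseteq\partial S\cup\partial T$ (hence $\partial(S\cap T)\subseteq\partial S\cup\partial T$) hold, because the corresponding inclusions hold for topological boundaries in each section $Z(\omega)$. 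Consequently the condition $P^*(\partial\,\cdot)=0$ is preserved under complements and finite unions and intersections, so $\mathcal C$ is an algebra.

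The crux is to produce enough continuity sets. Fix a countable dense set $\{z_k\}\subset Z$ and a metric $\rho$ on $Z$. For each fixed $k$ the spheres $\{z:\rho(z,z_k)=r\}$ are pairwise disjoint as $r>0$ varies, so $r\mapsto P^*\bigl(\Omega\times\{z:\rho(z,z_k)=r\}\bigr)$ is strictly positive for at most countably many $r$. Discarding these countably many radii over all $k$ leaves a co-countable, in particular dense, set $\mathcal R$ of radii. For $r\in\mathcal R$ the open ball $G_{k,r}=\{z:\rho(z,z_k)<r\}$ satisfies $\int_\Omega\mu_\omega(\partial G_{k,r})\,dP=0$ (its $Z$-boundary lies in the sphere), whence $G_{k,r}$ is a $\mu_\omega$-continuity set for $P$-a.e.\ $\omega$. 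The balls $\{G_{k,r}:k\in\mathbb N,\ r\in\mathcal R\cap\mathbb Q\}$ form a countable base of $Z$, so the countable algebra $\mathcal A_0$ they generate consists of a.e.-fiberwise continuity sets and satisfies $\sigma(\mathcal A_0)=\mbox{Bor}(Z)$.

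I would then lift this to the product. For $F\in\mathcal F$ and $G\in\mathcal A_0$ the fiberwise description of the random boundary gives $\partial(F\times G)=F\times\partial G$, so $P^*(\partial(F\times G))=\int_F\mu_\omega(\partial G)\,dP=0$; thus every such rectangle lies in $\mathcal C$. The algebra $\mathcal A_1$ generated by these rectangles therefore lies in $\mathcal C$, and since $\mathcal F$ is a $\sigma$-algebra and $\mathcal A_0$ generates $\mbox{Bor}(Z)$, we have $\sigma(\mathcal A_1)=\mathcal F\otimes\mbox{Bor}(Z)$. By the standard approximation theorem for a probability measure over a generating algebra, for each $i$ and each $\eta>0$ there is $C_i\in\mathcal A_1\subset\mathcal C$ with $P^*(A_i\triangle C_i)<\eta$.

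Finally I would disjointify. Put $B_i=C_i\setminus\bigcup_{j<i}C_j$; these are pairwise disjoint, and, since $\mathcal C$ is an algebra, they are again $P^*$-continuity sets (as is $B=\bigcup_{i=1}^s B_i$). Using that the $A_i$ are disjoint one checks $A_i\cap C_j\subseteq C_j\triangle A_j$ for $j\neq i$, which yields the inclusion $A_i\triangle B_i\subseteq\bigcup_{j\le i}(A_j\triangle C_j)$ and hence $P^*(A_i\triangle B_i)\le i\,\eta$; summing gives $\sum_{i=1}^s P^*(A_i\triangle B_i)\le\tfrac{1}{2}s(s+1)\,\eta$, so taking $\eta=\varepsilon/s^2$ completes the proof. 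The only genuinely delicate point is the construction of the fiberwise continuity sets $G_{k,r}$, together with the joint measurability of $\partial(F\times G)$, which is already covered by the random-set facts recalled just before Definition \ref{def-continuity-sets}; the algebra structure of $\mathcal C$ and the disjointification are bookkeeping.
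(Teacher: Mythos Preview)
Your proof is correct and takes a genuinely different, more conceptual route than the paper. The paper works by hand in three steps: for $s=1$ it approximates $A$ by a finite union of rectangles $\Omega_j\times C_j$ with $C_j$ closed, then fattens each $C_j$ to a $\delta$-neighbourhood $C_j^\delta$, exploiting that the fiberwise boundaries $\partial C_j^\delta$ are disjoint for distinct $\delta$ to pick radii with $P^*(\partial(\Omega_j\times C_j^\delta))=0$; for $s=2$ it first obtains a continuity set $B$ approximating $A=A_1\cup A_2$ and then carefully splits $B$ into two continuity pieces; the general case is then done by induction on $s$. Your argument instead isolates the two structural facts --- that the $P^*$-continuity sets form an algebra, and that this algebra contains a generating algebra for $\mathcal F\otimes\mbox{Bor}(Z)$ --- and then invokes the standard approximation-by-generating-algebra theorem once, handling the disjointness by a single disjointification $B_i=C_i\setminus\bigcup_{j<i}C_j$ with the clean inclusion $A_i\triangle B_i\subseteq\bigcup_{j\le i}(A_j\triangle C_j)$. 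This is shorter and more transparent; the paper's approach has the virtue of being entirely self-contained and of making the continuity sets explicit.

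One small slip: the set $\mathcal R$ of good radii is co-countable, but $\mathcal R\cap\mathbb Q$ could in principle be empty. You should instead take any countable dense subset of $\mathcal R$ (which exists since $\mathcal R$ is dense in $(0,\infty)$); the balls with those radii still form a base of $Z$. This is purely cosmetic and does not affect the argument.
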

\begin{proof}
  Step 1. We first prove this result for $s=1$, i.e. for a given $A\in \mathcal F\otimes \mbox{Bor}\,(Z)$ and $\varepsilon >0$ we shall find a $P^*$-continuity subset $B\in \mathcal F\otimes \mbox{Bor}\,(Z)$ such that $P^*(A\triangle B)<\varepsilon$. Indeed, there exist $\Omega_j\in \mathcal F$ and closed subsets $C_j\subset Z$ ($j=1,...,J$) such that the set $A_\varepsilon=\bigcup\limits_{j=1}^J (\Omega_j\times C_j)$ satisfies
  $P^*(A\triangle A_\varepsilon)<\varepsilon/2.$ Consider $\delta$-neighborhoods $C_j^\delta$ of the sets $C_j$. Clearly,  $\partial (C_j^\delta)$ have no common points for different $\delta$. Therefore, there exist sequences $\delta_{n}^j\to 0$  ($j=1,...,J$, $n\to\infty$), for which
$P^*(\partial (\Omega_j\times C_j(\delta_{n}^j)))=0$ for all $j=1,...,J$ and $n\in N$. On the other hand, $\bigcap\limits_{n=1}^\infty (\Omega_j\times C_j(\delta_{n}^j))=\Omega_j\times C_j$, so that there is a number $k\in N$ such that $P^*((\Omega_j\times C_j(\delta_{k}^j))-(\Omega_j\times C_j))<\varepsilon /2J$ for all $j=1,...,J$. The set $B=\bigcup\limits_{j=1}^J (\Omega_j\times C_j(\delta_{n}^j))$ is {a continuity set} of the measure $P^*$ and
$$
P^*(A\triangle B)\le P^*(A\triangle A_\varepsilon)+\sum\limits_{i=1}^J P^*((\Omega_j\times C_j(\delta_{n}^j))-(\Omega_j\times C_j))<\varepsilon.
$$

Step 2. Consider the case of $s=2$. Let $A_1, A_2\in \mathcal F\otimes \mbox{Bor}\,(Z)$, $A_1\cap A_2=\emptyset$, $A=A_1\cup A_2$. It follows from step 1 that for any $\varepsilon >0$ there is a $P^*$-continuity subset $B\in \mathcal F\otimes \mbox{Bor}\,(Z)$ such that $P^*(A\triangle B)<\varepsilon$. We shall find two $P^*$-continuity subsets $B_1$, $B_2$ such that
\begin{equation}\label{eq-sec-extensions-prob-spaces-1}
  B_1 \cap B_2=\emptyset, \ \ \  B_1 \cup B_2=B, \ \ \ \ P^*(A_1\triangle B_1)+P^*(A_2\triangle B_2)<\varepsilon.
\end{equation}
Put $C_1=A_1\cap B$ and $C_2=B-C_1=B-A_1$. Clearly, $C_1\cap C_2=C_1\cap A_2=C_2\cap A_1=\emptyset$ and $C_1\cup C_2=B$. Therefore,
$$
(A_1\triangle C_1)\cap (A_2\triangle C_2)=\emptyset \ \ \ \mbox{and} \ \ \ (A_1\triangle C_1)\cup (A_2\triangle C_2)=A\triangle B,
$$
so that
$$
P^*(A_1\triangle C_1)+ P^*(A_2\triangle C_2)=P^*(A\triangle B)<\varepsilon
$$
Let $\sigma=\varepsilon-P^*(A\triangle B)>0$. Applying the result from step 1 for $A_1$ and $A_2$ yields two   $P^*$-continuity subsets $B'_1, B'_2\in \mathcal F\otimes \mbox{Bor}\,(Z)$ such that $P^*(A_1\triangle B'_1)<\sigma/2$ and $P^*(A_2\triangle B'_2)<\sigma/2$. Define $B_1=B'_1\cap B$ and $B_2=B-B_1$. Then
$$
P^*(B_1\triangle C_1)=P^*((B_1\cap B)\triangle C_1)\le P^*(B'_1\triangle C_1)<\sigma/3,
$$
as $C_1\subset B$. Moreover,
$$
P^*(B_2\triangle C_2)=P^*((B-B_1)\triangle (B-C_1))=P^*(B_1\triangle C_1)<\sigma/3,
$$
as $B_1, C_1\subset B$. Summarizing we obtain
$$
\begin{array}{c}
P^*(A_1\triangle B_1)+ P^*(A_2\triangle B_2)\le P^*(A_1\triangle C_1)+ P^*(A_2\triangle C_2)
 \\
+ P^*(B_1\triangle C_1)+ P^*(B_2\triangle C_2)=P^*(A\triangle B)+\frac{2\sigma}{3}<\varepsilon,
\end{array}
$$
which concludes the proof if $s=2$.

Step 3. The general case is treated by induction. Suppose that the statement is proven for $s-1$, define $A^1=\bigcup\limits_{i=1}^{s-1}A_i$, $A^2=A_s$ and construct, as in step 2, two disjunct $P^*$-continuity subsets $B^1$, $B^2$, for which $\varepsilon^1+\varepsilon^2<\varepsilon$, where $\varepsilon^1=P^*(A^1\triangle B^1)$ and $\varepsilon^2=P^*(A^1\triangle B^1)$. Applying the induction hypothesis, we get
disjunct $P^*$-continuity subsets $B_1,..., B_{s-1}$ such that
$$
\sum\limits_{i=1}^{s-1}P^*(A_i\triangle B_i)<\varepsilon_1 \ \ \ \mbox{and} \ \ \ \bigcup\limits_{i=1}^{s-1}B_i=B^1.
$$
Adding $B_s\equiv B^2$ to $B_1,...,B_{s-1}$ yields $s$ a set of disjunct  $P^*$-continuity subsets satisfying
$$
\sum\limits_{i=1}^{s}P^*(A_i\triangle B_i)=\sum\limits_{i=1}^{s-1}P^*(A_i\triangle B_i)+P^*(A^2\triangle B^2)
<\varepsilon_1+\varepsilon_2<\varepsilon.
$$
\end{proof}

\begin{lemma}\label{th-Nemytskii}
  Suppose that $U(\omega)$ ($\omega\in\Omega$) is a random closed, convex, bounded and nonempty subset of $R^m$ such that $$\mbox{Gr}\, U\equiv \{(\omega, U(\omega)) \ : \ \omega\in\Omega)\}\in \mathcal F\otimes \mbox{Bor}\,(R^m).$$ Let $$\mathcal A={\mathcal P}(U)\equiv {\mathcal P}(R^n)\cap \{x: \ x(\omega)\in U(\omega) \ \mbox{a.s}\}$$ and $h: {\mathcal A} \to {\mathcal A}$ be an LC operator. Then $h$ has at least one fixed point in $\mathcal A$.
\end{lemma}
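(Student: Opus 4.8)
The plan is to turn this genuinely infinite-dimensional, non-compact fixed-point problem into a pointwise one, exploiting that $h$ is defined on the space of \emph{all} random points. First I would remove the constraint that the domain is $\mathcal P(U)$ rather than the whole space $\mathcal P(R^m)$. Since each $U(\omega)$ is closed and convex and $\mbox{Gr}\,U$ is measurable, the metric projection $r(\omega,\cdot)$ onto $U(\omega)$ is single-valued, nonexpansive in its spatial argument, and measurable in $\omega$, hence a Carath\'eodory map; the associated superposition operator $h_r$ is a local, continuous-in-probability retraction of $\mathcal P(R^m)$ onto $\mathcal P(U)=\mathcal A$. Replacing $h$ by $\tilde h=h\circ h_r:\mathcal P(R^m)\to\mathcal A\subset\mathcal P(R^m)$ produces a local LC operator defined on the full space, whose fixed points in $\mathcal P(R^m)$ necessarily lie in its range $\mathcal A$ and there coincide with the fixed points of $h$.

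The decisive step, which I expect to be the main obstacle, is the representation theorem for local continuous operators on spaces of all random points (Shragin's locality and the resolution of the Nemytskii conjecture, see \cite{Shragin,Pon-0}): because $\tilde h:\mathcal P(R^m)\to\mathcal P(R^m)$ is local and continuous in probability, there is a Carath\'eodory function $\tilde f:\Omega\times R^m\to R^m$ with $(\tilde h x)(\omega)=\tilde f(\omega,x(\omega))$ a.s. for every $x$. Testing on constant random points $x\equiv u$ gives $\tilde f(\omega,u)\in U(\omega)$ a.s.\ for each fixed $u$, and continuity in the spatial variable together with separability upgrades this to: for a.a.\ $\omega$, the map $\tilde f(\omega,\cdot)$ sends $R^m$ into $U(\omega)$. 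This is the heart of the argument and the only place where it is essential that the domain be the \emph{unrestricted} space $\mathcal P(R^m)$; as Example~\ref{ex-trivial-stoch-basis} records, no such representation is available on adapted subspaces, which is precisely why the general theorem produces only weak fixed points.

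With the representation in hand, the equation $\tilde h x=x$ becomes pointwise. For a.a.\ fixed $\omega$ I would apply Brouwer's theorem to the continuous self-map $\tilde f(\omega,\cdot)|_{U(\omega)}:U(\omega)\to U(\omega)$ of the nonempty convex compact set $U(\omega)$, obtaining a nonempty, closed fixed-point set $\Phi(\omega)=\{u\in U(\omega):\tilde f(\omega,u)=u\}$. It then remains to choose $x(\omega)\in\Phi(\omega)$ measurably. Since a Carath\'eodory function is jointly measurable, the map $(\omega,u)\mapsto\tilde f(\omega,u)-u$ is $\mathcal F\otimes\mbox{Bor}(R^m)$-measurable, so $\mbox{Gr}\,\Phi=\mbox{Gr}\,U\cap\{(\omega,u):\tilde f(\omega,u)-u=0\}$ is a measurable set with nonempty closed $\omega$-sections. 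The Aumann--von Neumann measurable selection theorem (see \cite{Wagner}), applicable because $\mathcal S$ is complete, yields a measurable $x$ with $x(\omega)\in\Phi(\omega)$ a.s.; this $x$ lies in $\mathcal A$ and satisfies $hx=x$, completing the proof. The Brouwer and selection steps are routine once the representation is secured, so the whole weight of the lemma rests on that reduction.
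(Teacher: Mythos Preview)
Your proposal is correct and follows essentially the same route as the paper: invoke the representation theorem from \cite{Pon-0} to write the LC operator as a superposition by a Carath\'eodory function, apply Brouwer's theorem in each fiber $U(\omega)$, and select a measurable fixed point via \cite{Wagner}. The only cosmetic difference is that the paper quotes the representation theorem in the form valid directly for LC operators on $\mathcal P(U)$ with random $U$, whereas you first retract $\mathcal P(R^m)$ onto $\mathcal P(U)$ via the metric projection and then apply the full-space version; both are equivalent ways of accessing the same key ingredient.
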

\begin{proof}
  The proof is based on the generalization of \textit{the Nemytskii conjecture}. The latter states that the Carath\'{e}odory conditions on $F$ are not only sufficient, but also necessary for the superposition operator $h_F$ to be continuous in measure. This conjecture, in a slightly adjusted form, was proven in \cite{Pon-0}, together with its generalization for arbitrary LC operators. More precisely, the main result in \cite{Pon-0} says that for an LC operator $h: {\mathcal A} \to {\mathcal A}$ there exists a Carath\'{e}odory function $f: \mbox{Gr}\, U\to {R^m}$ such that $hx=h_fx$ $P$-a.s. for any $x\in \mathcal A$. Evidently, $f(\omega,\cdot)$ leaves the set $U(\omega)$ a.s. invariant.
  By Brouwer's fixed-point theorem, the set $\mbox{Fix}\,(\omega)$ consisting of all fixed points $x_\omega\in U(\omega)$ of the map $f(\omega, \cdot):U(\omega) \to U(\omega)$ is a.s. nonempty. On the other hand, the function $F(\omega, x)=f(\omega, x)-x$  is Carath\'{e}odory and hence $\mathcal F\otimes \mbox{Bor}\,(R^m)$-measurable. Therefore, $\{(\omega, \mbox{Fix}\,(\omega)), \ \omega\in\Omega\}=G^{-1}(0)\in \mathcal F\otimes \mbox{Bor}\,(R^m)$ and by the measurable selection theorem  (see e.g. \cite[p. 10]{Crauel}) there exists a $\mathcal F$-measurable function $x: \Omega\to R^n$ such that $x(\omega)\in U(\omega)$ a.s. Thus, $x\in \mathcal A$ and, by construction, $hx=h_fx=x$ a.s.
\end{proof}

Let us remark that the representation theorem from \cite{Pon-0} is not valid for all subsets $\mathcal A \subset \mathcal P(R^n, \mathcal S).$ On the other hand, the fixed-point result from Lemma \ref{th-Nemytskii} is not valid either for arbitrary closed, convex, bounded and nonempty subsets of $\mathcal P(R^n, \mathcal S)$, see \cite{Pon-2}.

\subsection{Proof of Theorem \ref{th-uniform-cont}}\label{sec-proof of uniform th}
\hfill \\

\textit{1) $\Rightarrow$ 2)} is trivial as ${\mathcal P}a(Q)$ is tight if $Q$ is compact.

\textit{2) $\Rightarrow$ 3).} We will use the third description of uniform continuity (see Subsection \ref{sec-uniform-continuity}).
Let $Q_0\subset X$ be an arbitrary compact and $\gamma>0$ be fixed.
Define $Q$ to be the closed convex hull of the set $\bigcup_{n\in N}\pi_n (Q_0)$. Clearly, $Q$ is compact and $Q_0\subset Q$.
Pick arbitrary $\varepsilon >0$
and
choose $\rho>0$ so that
 \begin{equation}\label{eq-6-1}
\|x'-y'\|_X\le\rho \ \mbox{a.s} \ \ \ \mbox{implies} \ \ \
d_Y(hx',hy')<\frac{\varepsilon}{3} \ \ \ \forall x', y'\in \mathcal Pa(Q).
\end{equation}
Take arbitrary $x, y\in \mathcal Pa(\mathcal X, \mathcal B)$ which satisfy ${  P}\{\Vert x-y\Vert_X >{%
\frac \rho 3}\}<\frac \epsilon 3 $ and $P\{x\notin Q_0\}<\gamma, P\{y\notin Q_0\}<\gamma$ and
fix a sufficiently large number $n$ (depending on $x$ and $y$), for which

1) ${
P}\{\Vert \pi _nx-x\Vert_X >\frac \rho 3 \}<\frac \varepsilon 3$, ${
P}\{\Vert \pi _ny-y\Vert_X >\frac \rho 3 \}<\frac \varepsilon 3$,

\noindent
so that

\begin{equation}\label{eq-6-2}
{P}\{\Vert \pi _nx-\pi _ny\Vert_X > \rho \}<\varepsilon,
\end{equation}

\noindent
and

2)
$d_Y(h(\pi _nx),hx)<\frac \varepsilon 3$, \
$d_Y(h(\pi _ny),hy)<\frac \varepsilon 3$.

Using $\pi_n$ let us define the finite dimensional projective system  $\mathcal X^n$ as it is done in (\ref{eq-finite-dim-system})
and consider
the direct product $\mathcal E^n$ of two copies of $\mathcal X^n$, the compact convex subset $W^n= \{(x,y)\in \pi_n(X)\times \pi_n(X): \ x,y\in Q, \|x-y\|_X\le \rho\}$ and the continuous projection $\phi_n: \pi_n(X)\times \pi_n(X)\to W^n$ such that the corresponding superposition operator $h_{\phi_n}$ maps $\mathcal Pa(\mathcal E^n, \mathcal B^*)$ to $\mathcal Pa(\mathcal E^n, \mathcal B^*)\cap \mathcal P(W^n, \mathcal B^*)$. Such a projection exists due to Lemma \ref{lem-projection}. Put $(u,v)=h_{\phi_n}(\pi_{n}x, \pi_{n}y).$ By construction, $\|u-v\|_X\le\rho$, which implies $d_Y(hu,hv)<\frac \varepsilon 3$ due to (\ref{eq-6-1}).

By (\ref{eq-6-2})
$u$ and $v$ coincide with $\pi_nx$ and $\pi_ny$, respectively, on a measurable subset $\Omega'$ of $\Omega$ where $x, y$ belong to $Q_0$ (because in this case $\pi_nx$ and $\pi_ny$ belong to $Q$)  and where $\Vert x-y\Vert_X\le\rho$.  Therefore, $P(\Omega-\Omega')< 3\gamma.$


Hence
\[
\begin{array}{c}
d_Y(hx,hy)\le
d_Y(hx,h(\pi _nx))+
d_Y(hy,h(\pi_ny))+
d_Y(h(\pi_nx), h(\pi_ny)) \\
<\frac{2\varepsilon}{3} +
d_Y(hu,hv)+P(\{u\ne
\pi _nx\}\cup\{v\ne \pi_ny\})<\varepsilon+P(\Omega-\Omega')<\varepsilon+3\gamma.
\end{array}
\]
Setting $O(\gamma)=3\gamma$ completes the proof of the statement.

\textit{3) $\Rightarrow$ 1).}
Let $\mathcal K$ be a tight subset of $\mathcal Pa (\mathcal X, \mathcal B).$ Take arbitrary $\varepsilon >0, \sigma>0$ and find a compact $Q\subset X$ for which $P\{x\notin Q\}<\gamma$ for any $x\in \mathcal K$, where $O(\gamma)<\frac \varepsilon 2$. By assumption, there exist $\delta >0$ such that $d_X(x,y)<\delta$ implies $d_Y(hx,hy)<\frac \varepsilon 2 + O(\gamma)<\varepsilon$. Therefore $h$ is uniformly continuous on $\mathcal K$.
\subsection{Proof of Theorem \ref{th-approx-simple-points}}\label{sec-appr-simple-points}
\hfill \\

We split up the proof into 5 steps. In steps 1-4 we prove

\textit{The simplified version of Theorem \ref{th-approx-simple-points}:}
{\it
For any $x\in \mathcal{P}a(\mathcal X, \mathcal B^*)$ such that $x|_A=0$ $P^*$-a.s. on the set $A\in \mathcal F^*$ and any $\varepsilon>0$, there exist $y\in \mathcal{S}a(\mathcal X, \mathcal B^*)$ and a $P^*$-continuity subset $B\in \mathcal F^*$, for which

(1) $y|_{B}=0$ $P^*$-a.s.,

(2) $d_X^*(x,y)<\varepsilon$, and

(3) $P^*(A\triangle B)<\varepsilon$.}

In the course of the proof  the random point $x$ and the set $A$ will be successively simplified by constructing special approximations with an arbitrary precision. This will be done in steps 1 - 3. In step 4 the proof of the simplified version of Theorem \ref{th-approx-simple-points} will be completed. Here we will use Lemma \ref{lem-appr-by-continuity-sets} and simplifications from steps 1-3. The proof of Theorem \ref{th-approx-simple-points} will be finished in step 5.

Here and in the sequel $d_X^*(u,v)$ is the distance on the metric space $\mathcal{P}(X, \mathcal S^*)$, where $\mathcal S^*$ is the probability space hosting the stochastic basis $\mathcal B^*$.

\textit{Step 1. The random point $x$ may be assumed to take values in a finite dimensional subspace.}
Pick an arbitrary $\varepsilon >0$ and denote by $\pi_\nu: X\to X$ linear finite dimensional Volterra maps strongly converging to the identity map in $X$ as $\nu\to\infty$. We use the index $\nu$ instead of $n$ is this proof, as $n$ is already included in the formulation of Theorem \ref{th-approx-simple-points}.

Let $\mathcal X^\nu$ be the finite dimensional projective systems defined in (\ref{eq-finite-dim-system}) by means of $\pi_\nu$.
Note that $\pi_\nu x\in \mathcal{P}a(\mathcal X^\nu, \mathcal B^*)\subset \mathcal{P}a(\mathcal X, \mathcal B^*)$ due to Remark \ref{rem-Volterra}.
Evidently, $\pi_\nu x|_A=0$ $P^*$-a.s. and the strong convergence of the sequence  $\{\pi_\nu\}$ to the identity map in $X$ implies convergence of
$\{\pi_\nu x\}$ to $x$ in probability $P^*$ as $\nu\to\infty$. Therefore, $x$  can be approximated, with an arbitrary precision, by $\pi_\nu x$ for sufficiently large $\nu$, the set $A$ being unchanged. All this means that the projective system $\mathcal X$ can be replaced by its finite dimensional approximation $\mathcal X^\nu$.
Moreover, utilizing the construction from the proof of  Lemma \ref{lem-isomorphism}, we can replace $\mathcal X^\nu$ and $\mathcal B^*$ with the projective system $\mathcal E=(E_i, p^{ji}, T_m)$ and the finite stochastic basis $\mathcal B_m^* =(\Omega^* , {\mathcal F^*}, ({\mathcal F}^*_i)_{i\in {T}_m}, P)$, respectively, constructed as follows:

1) $E_i=\{(x_1,...,x_i, 0, ...,0)\}$ and $p^{ji}$ ($i\ge j$) is the orthogonal projection, which removes the coordinates $(x_{j+1},..., x_{i})$;

2) $T_i=\{t: \ \mbox{dim} X_t=k\}, \ \ \mbox{where} \ k=\mbox{min} \{j\ge i: \ T_j\ne\emptyset\}$;

3)
$\mathcal F_i^*=\bigcap_{t\in T_i} F_t^*$ ($i\in {T}_m)$.

Evidently, $\mathcal F_i^*$ is the $P^*$-completion of the $\sigma$-algebra
$
\mathcal F_i\otimes \bigcap_{t\in T_i}(q^{t})^{-1}(\mbox{Bor}\,(Z_t)),
$
where
$
{\mathcal F}_i=
\bigcap\limits_{t\in T_i}{\mathcal F}_t.
$
 Due to Lemma \ref{lem-isomorphism}, there exists a local linear isomorphism between the topological spaces $\mathcal Pa (\mathcal X^\nu, \mathcal B^*)$ and $\mathcal Pa(\mathcal E, \mathcal B^*_m)$, so that the latter can replace the former in the next steps of the proof.

\textit{Step 2. The random point $x$ may be assumed to take finitely many values.}

We proceed with assuming that $x\in \mathcal Pa(\mathcal E, \mathcal B_m^{*})$, which is easy to see to be equivalent to the representation $x=\sum\limits_{i=1}^m\alpha_ie_i$ a.s., where $(e_1,...,e_m)$ is the standard basis in $E=E_m$ and $\alpha_i: \ \Omega \to R$ ($i=1,...,m$) is $\mathcal F_i^*$-measurable ($i=1,...,m$).
From the property $x |_A =0$ $P^*$-a.s., we conclude that $\alpha_i |_A=0$ $P^*$-a.s. for all $i=1,...,m$. Putting $\Omega_i^{*,1}=\{\omega^*\in\Omega^*: \ \alpha_i(\omega^*)=0\}\in \mathcal F_i^*$, we obtain $P^*\left(A\triangle (\cap_{i=1}^m \Omega_i^{*,1})\right)=0$.

Using standard approximation technique for the $F_i^*$-measurable, real valued functions $\alpha_i$ we can find, for arbitrary $\varepsilon$ and each $i$, sets $\Omega_{ij}\in \mathcal F_i$, $B_{ij}\in \bigcap_{t\in T_i}(q^{t})^{-1}(\mbox{Bor}\,(Z_t))$, real constants $a_{ij}$ ($1\le i \le m, 1\le j \le s$) and a natural number $1\le r \le s$ satisfying the following properties:
\begin{equation}\label{eq_x_y}
\begin{array}{l}
P^*(\Omega^{*}\triangle(\bigcup\limits_{j=1}^s(\Omega_{ij}\times B_{ij})))<\varepsilon, \ \  \  a_{ij}=0 \ (1\le i \le m, 1\le j \le r)\\
d^*_R(\alpha_i,\alpha_i')<\frac{\varepsilon}{m}, \ \ \
P^*(\Omega_i^{*,1}\triangle \Omega_i^{*,2})<{\varepsilon}, \ \ \ \mbox{where} \\
\Omega_i^{*,2}=\bigcup\limits_{j=1}^r(\Omega_{ij}\times B_{ij}) \ \ \ \mbox{and} \ \ \
      \alpha_i'=\sum\limits_{j=1}^s a_{ij}I_{\Omega_{ij}\times B_{ij}}, \ \ \  \alpha_i' |_{\Omega_i^{*,2}}= 0.
 \end{array}
\end{equation}
Here $d_R^*$ is the following metric on the space of $F_i^*$-measurable random points:
\begin{equation}\label{eq_metric_R}
  d_R^*(\alpha,\alpha')=E^*(\min\{|\alpha-\alpha'|;1\}.
\end{equation}
In what follows we assume, by technical reasons, that
the norm in the finite dimensional space $E$ is defined as $\|x\|_E=\sum\limits_{i=1}^m|\alpha_i|$. In this case,
$$
d^*_E(x, x')=E^*(\min\{||x-x'||_E;1\}\le \sum\limits_{i=1}^mE^*(\min\{|\alpha_i-\alpha_i'|;1\}<\varepsilon,
$$
where $x'=(\alpha_1',...,\alpha_m').$
As $\varepsilon >0$ is arbitrary, we can redefine $x=(\alpha_1,...,\alpha_m)$ and $A$ to be
\begin{equation}\label{eq_x_y_redefined_2}
x=x'=(\alpha_1',...,\alpha_m')\ \ \ \mbox{and} \ \ \ A=\bigcap\limits_{i=1}^m\Omega_i^{*,2},
\end{equation}
where $\alpha_i'$, $\beta_i'$ and $\Omega_i^{*,2}$ are defined in (\ref{eq_x_y}). By construction, $x$, so redefined, assumes finitely many values and $x|_A=0$ on the new subset $A$. This simplification is used in Step 3.


\textit{Step 3. The random points $x$ and $y$ can be assumed to be measurable with respect to the $\sigma$-algebra of random cylinder sets.}

Examples show that the $\sigma$-algebras $\bigcap_{t\in T_i}(q^{t})^{-1}(\mbox{Bor}\,(Z_t))$ may not necessarily be the Borel $\sigma$-algebras on some Polish space, so that Lemma \ref{lem-appr-by-continuity-sets} cannot be directly used in connection with these $\sigma$-algebras. However, Property $(\Pi)$ for the projective system $\mathcal Z$ helps to avoid this problem by replacing the $\sigma$-algebras $F^*_i$ by their finite dimensional approximations based on the finite dimensional projective systems
$\mathcal Z^\nu=(Z^\nu, q^{ut}|_{Z_\nu}, T)$, where $Z^\nu=q_\nu(Z)$, so that the corresponding intersections of cylinder $\sigma$-algebras will be Borel on some Polish space.

Let $\tau_\nu: Z\to Z$ be finite dimensional Volterra maps, which strongly converge to the identity map in $Z$ as $\nu\to\infty$,
and $\tau_\nu^t:\ Z_t\to Z_t$ be the maps generated by $q_\nu$, see Definition \ref{def-Volterra}.
For any $i=1,...,m$, $t\in T_i$ consider
$$
\phi_{t,\nu}\equiv q^t\circ \tau_\nu=\tau^t_\nu\circ q^t: \ Z \to Z_t^\nu \equiv q^t(Z^\nu),
$$
and the associated measure $P^*_{t,\nu}\!\equiv\! P^*\phi_{t,\nu}^{-1}$ defined on the $\sigma$-algebra $\mathcal F_i\otimes (q^t)^{-1}(\mbox{Bor}(Z^\nu))$ by
$$
P^*_{t,\nu}(\Delta)=P^*\{(\mbox{id}\times \phi_{t,\nu})^{-1}(\Delta)\},
$$
where $\mbox{id}:\Omega\to\Omega$ is the identity map.

Denote by $\mathcal F^*_{t, \nu}$ the completion of the $\sigma$-algebra $\mathcal F_i\otimes \mbox{Bor}(Z^\nu)$ w.r.t. the measure  $P^*_{t,\nu}$ and put, for any $B_{ij}$ from Step 2,
$B_{ij}^{t,\nu} = \phi_{t,\nu}(B_{ij})$. As $B_{ij}\in \mbox{Bor} (Z)$, its image $B_{ij}^{t,\nu}$ under the continuous map $\phi_{t,\nu}$ can be obtained by an $\mathcal A$-operation from the closed subsets of the space $Z^\nu_t$, see e.g. \cite[Th. 2.4.2]{Bog}. Then, using the same $\mathcal A$-operation we obtain the set $\Omega_{ij}\times B_{ij}^{t,\nu}$ from the subsets belonging to the family
$$\Sigma\equiv \{\Omega_{ij}\times\ \mbox{closed subsets of} \ Z_t^\nu\},$$
which is closed under countable intersections and finite unions. Therefore, by \cite[Th. 2.2.9]{Bog} the set
$\Omega_{ij}\times B_{ij}^{t,\nu}$ is $\mathcal F^*_{t, \nu}$-measurable for all $1\le i\le m$,
$1\le j \le r$.

Let us pick some $t_i\in T_i$ ($i=1,...m$).
The strong convergence of the sequence $\tau_\nu$ to the identity map in $Z$ implies that
$$
\bigcap\limits_{\nu=1}^\infty (\Omega_{ij}\times \phi_{t_i,\nu}^{-1}(B_{ij}^{t_i,\nu}))=\Omega_{ij}\times B_{ij} \ \ (1\le i\le m, \ 1\le j \le s).
$$
Therefore, for any $\varepsilon>0$, there exists $\nu\in N$ such that
\begin{equation}\label{eq_step 3}
P^*\left((\Omega_{ij}\times \phi_{t_i,\nu}^{-1}(B_{ij}^{t_i,\nu}))\triangle (\Omega_{ij}\times B_{ij})\right)<\frac{\varepsilon}{2s^2m}
\end{equation}
for all $1\le i\le m$,
$1\le j \le s$. By the Volterra property, $$\phi_{t_i,\nu}^{-1}(B_{ij}^{t_i,\nu})=(q^{ut_i})^{-1}\left(\phi_{u,\nu}^{-1}(B_{ij}^{u,\nu})\right)$$ if $u\le t_i$, $u\in T_i.$ Therefore, the estimates in (\ref{eq_step 3}) hold true for any $u\le t_i$, $u\in T_i$.

As $\mbox{dim}\, q^{t_i}(Z_\nu)<\infty$, there is $u_i\in T_i$, $u_i\le t_i$, for which $\mbox{dim}\, q^{u_i}(Z_\nu)\le \mbox{dim}\, q^t(Z_\nu)$ for all $t\in T_i$. Then $q^{u_iu}$ is the identity map for all $u\le u_i$, $u\in T_i$, so that
$\mathcal F_i\otimes \mbox{Bor}\,(Z_u^\nu)=\mathcal F_{i}\otimes \mbox{Bor}\,(Z_{u_i}^\nu)$ for all $u\in T_i$, $u\le u_i$.

Denote $$
\begin{array}{c}
C_{ij}=B_{ij}^{u_i,\nu}-\left(\bigcup\limits_{k< j}B_{ij}^{u_i,\nu}\right)\in \mathcal F^*_{t,\nu}, \\
  B_{ij}'=\phi_{t,\nu}^{-1}(B_{ij}^{u_i,\nu})\in \mathcal F^*, \ \ \
  C_{ij}'=\phi_{t,\nu}^{-1}(C_{ij})\in \mathcal F^*, \\
  \bar B_{ij}=\Omega_{ij}\times B_{ij}, \ \ \  \bar B_{ij}'=\Omega_{ij}\times B_{ij}', \ \ \ \bar C'_{ij}=\Omega_{ij}\times C_{ij}'
\end{array}
$$
for all $1\le i \le m$, $1\le j \le s$. By definition, $\bar B'_{ij}$ are $\bar C'_{ij}$ are random cylinder sets, for which
$\bigcup\limits_{j=1}^s\bar C'_{ij}=\bigcup\limits_{j=1}^s\bar B'_{ij}$ for all $i=1,...,m$.

We claim that
\begin{equation}\label{eq_Cij}
  P^*\left((\Omega_{ij}\times B_{ij})\triangle (\Omega_{ij}\times C_{ij}')\right)=P^*\left(\bar B_{ij}\triangle \bar C_{ij}'\right)<\frac{\varepsilon}{sm} \ \ \ (1\le i \le m, \ 1\le j \le s).
\end{equation}

Indeed, minding $\bar C_{ij}'\subset \bar B_{ij}'$, $\bar B_{ik}\cap \bar B_{ij}=\emptyset$  we obtain
$$
\begin{array}{l}
P^*\left(\bar B_{ij}'\triangle \bar C_{ij}'\right)= P^*(\bar B_{ij}'-( \bar B_{ij}'-\bigcup\limits_{k<j}\bar B_{ik}'))
=P^*(\bigcup\limits_{k<j}(\bar B_{ik}'\cap \bar B_{ij}'))\le \sum\limits_{k<j}P^*(\bar B_{ik}'\cap \bar B_{ij}')
\\
= \sum\limits_{k<j}P^*((\bar B_{ik}'\cap \bar B_{ij}')\triangle (\bar B_{ik}\cap \bar B_{ij}))\le
 P^*\left(\bar B_{ij}\triangle \bar B_{ij}'\right)+\sum\limits_{k<j}P^*\left(\bar B_{ik}\triangle \bar B_{ik}'\right)  \\
  <  \frac{k\varepsilon}{2s^2m}\le \frac{\varepsilon}{2sm}
\end{array}
$$
by (\ref{eq_step 3}). Therefore,
$$
P^*\left(\bar B_{ij}\triangle \bar C_{ij}'\right)\le P^*\left(\bar B_{ij}\triangle \bar B_{ij}'\right)+P^*\left(\bar B_{ij}'\triangle \bar C_{ij}'\right)<\frac{\varepsilon}{2s^2m}+ \frac{\varepsilon}{2sm}\le \frac{\varepsilon}{sm},
$$
which justifies (\ref{eq_Cij}).

Put now
$$
\begin{array}{c}
\alpha_i''=\sum\limits_{j=1}^s a_{ij}I_{\Omega_{ij}\times C_{ij}'}, \ \ \
  \Omega_i^{*,3}=\bigcup\limits_{j=1}^r(\Omega_{ij}\times C'_{ij}).
\end{array}
  $$
By construction,
$$
\alpha_i'' |_{\Omega_i^{*,3}}= 0 \ \ \mbox{and} \ \
P^*(\Omega_i^{*,3}\triangle \Omega_i^{*,2})\le \sum\limits_{j=1}^rP^*\left(\bar B_{ij}\triangle \bar C_{ij}'\right)<\frac{r\varepsilon}{sm}\le \frac{\varepsilon}{m},
$$
where $\Omega_i^{*,2}=\bigcup\limits_{j=1}^r(\Omega_{ij}\times B_{ij})$, as it was defined in (\ref{eq_x_y}).

Observe that
$$d_R^*(\alpha'_i, \alpha''_i)\le \sum\limits_{j=1}^sP^*\left(\bar B_{ij}\triangle \bar C_{ij}'\right)<\frac{s\varepsilon}{sm} < \frac{\varepsilon}{m}.$$
Hence, as in step 2, we obtain that $d^*_E(x,x'')<\varepsilon$, where $(\alpha_1'',...,\alpha_m'').$

In addition, we have
$$
P^*\left(A\triangle \bigcap\limits_{i=1}^m\Omega_i^{*,3}\right)=P^*\left(\left(\bigcap\limits_{i=1}^m\Omega_i^{*,3}\right)\triangle \left(\bigcap\limits_{i=1}^m\Omega_i^{*,3}\right)\right)\le
\sum\limits_{i=1}^m P^*(\Omega_i^{*,3}\triangle \Omega_i^{*,2})< \varepsilon.
$$
As $\varepsilon >0$ is arbitrary, we can again redefine $x$ and $A$ to be
\begin{equation}\label{eq_x_y_redefined_3}
x=x''=(\alpha_1'',...,\alpha_m''), \ \ \  \mbox{and} \ \ \ A=\bigcap\limits_{i=1}^m\Omega_i^{*,3}, \ \ \ \mbox{respectively}.
\end{equation}

The great advantage of (\ref{eq_x_y_redefined_3}) compared with (\ref{eq_x_y_redefined_2}) is that the sets $\bar C'_{ij}=\Omega_{ij}\times \phi_{t,\nu}^{-1}(C_{ij})$ are random cylinder sets for all $u\in T_i$, $u\le u_i$, so that the set $A$ and the random point $x$  can be, without loss of generality, assumed to belong to the $P^*$-completion of the cylinder $\sigma$-algebra $\mathcal F_i\otimes (\phi_{u_i,\nu})^{-1}(\mbox{Bor}(Z^\nu_{u_i}))$ and be measurable with respect to this $\sigma$-algebra, respectively.

 This enables us to apply Lemma \ref{lem-appr-by-continuity-sets}, which is done in the final step of the proof.

\textit{Step 4. Final approximation of $x$ and $A$.}

According to Step 3, we may assume that
$$
x=(\alpha_1,...,\alpha_m),
$$
where
\begin{equation}\label{eq_step4}
\alpha_i=\sum\limits_{j=1}^s a_{ij}I_{\Omega_{ij}\times (\phi_{u_i,\nu})^{-1}(C_{ij})}, \ \
  \alpha_i|_{\Omega_i^{*}}=0,  \ \ \Omega_i^{*}\in \mathcal F_i\otimes
   (\phi_{u_i,\nu})^{-1}(\mbox{Bor}(Z^\nu_{u_i}))
\end{equation}
for some $\nu\in N$ and all $1\le i\le m$, so that, in particular, $x|_A=0$, where $A=\bigcap\limits_{i=1}^m\Omega_i^*$.
Moreover,  $u_i\in T_i$ can be chosen in such a way that $\mathcal F_i\otimes (\phi_{u_i,\nu})^{-1}(\mbox{Bor}(Z^\nu_{u_i}))=\mathcal F_i\otimes (\phi_{u,\nu})^{-1}(\mbox{Bor}(Z^\nu_{u}))$ for all $u\le u_i$, $u\in T_i$.

Consider the probability spaces $\left(\Omega\times Z^\nu_{u_i}, \mathcal F_i\otimes \mbox{Bor}(Z^\nu_{u_i}), P^*_{u_i,\nu}\right)$, the $\mathcal F_i\otimes \mbox{Bor}(Z^\nu_{u_i})$-measurable random variable
$$\hat{\alpha}_i=\sum\limits_{j=1}^s a_{ij}I_{\Omega_{ij}\times C_{ij}}, \ \ \ \hat x=(\hat\alpha_1,...,\hat\alpha_m)
$$
and the $\mathcal F_i\otimes \mbox{Bor}(Z^\nu_{u_i})$-measurable set $\hat\Omega_i=\bigcup\limits_{j=1}^r (\Omega_{ij}\times C_{ij}).$ By construction, $\alpha_i=\hat\alpha_i\circ (\phi_{u_i,\nu})^{-1}$ and $(\mbox{id}\times \phi_{u_i,\nu})^{-1}(\hat\Omega_i)=\Omega_i^*$, so that $\hat\alpha_i|_{\hat\Omega_i}=0.$

We pick an arbitrary $\varepsilon >0$ and apply Lemma \ref{lem-appr-by-continuity-sets} to the disjunct sets $A_{ij}\times C_{ij}$ and $(\Omega\times Z^\nu_{u_i})-\bigcup\limits_{j=1}^sA_{ij}\times C_{ij}$. By this, we arrive at disjunct $P^*_{u_i,\nu}$-continuity sets $\hat\Omega_{ij}^{c}\in \mathcal F_i\otimes \mbox{Bor}(Z^\nu_{u_i})$, for which $$
\sum\limits_{j=1}^s P^*_{u_i,\nu}((A_{ij}\times C_{ij})\triangle \hat\Omega_{ij}^c)<\frac{\varepsilon}{m} \ \ \mbox{for all} \ i=1,...,m.$$
Therefore,
\begin{equation}\label{eq_step4_1}
P^*_{u_i,\nu}(A_i^{c}\triangle \hat\Omega_i)\le \sum\limits_{j=1}^s P^*_{u_i,\nu}((A_{ij}\times C_{ij})\triangle \hat\Omega_{ij}^c)<\frac{\varepsilon}{m}, \ \  \mbox{where} \ \ A_i^{c}=\bigcup\limits_{j=1}^{r}\Omega_{ij}^c.
\end{equation}
Define $\alpha^c_i=a_{ij}$ on the disjoint sets $\hat\Omega_{ij}^n$, $i=1,...,m$. By construction, $\alpha^c_i$ are $\mathcal F_i\otimes \mbox{Bor}(Z^\nu_{u_i})$-measurable random variables and $\alpha_i^c|_{A_i^c}=0$, $i=1,...,m$. In addition,
$$
P^*_{u_i,\nu}\{\alpha^c_i\ne \hat\alpha_i\}\le P^*_{u_i,\nu}(A_i^{c}\triangle \hat\Omega_i)<\frac{\varepsilon}{m} \ \ (i=1,...,m),
$$
so that
\begin{equation}\label{eq_step4_2}
\begin{array}{l}
d_R^\nu(\hat\alpha_i, \alpha_i^c)=E^\nu\{\min |\alpha_i-\alpha_i^c|; 1\}\le P^*_{u_i,\nu}\{\alpha^c_i\ne \alpha_i\}<\frac{\varepsilon}{m}, \\
  d_E^\nu(\hat x, \hat x^c)=E^\nu\{\min \|\alpha_i-\alpha_i^c\|_E; 1\}<{\varepsilon},
\end{array}
\end{equation}
where $E^\nu$ is the expectation associated with the probability $P^*_{u_i,\nu}$ and $\hat x^c=(\alpha_1^c,...,\alpha_m^c)$.

In Step 3 we proved that $\mbox{Bor}(Z^\nu_{u_i})=\mbox{Bor}(Z^\nu_{u})$ for all $u\in T_i$, $u\le u_i$. Hence the random variables
$\alpha^c_i\circ \tau_\nu^{u_i}$ are measurable with respect to $\bigcap\limits_{u\le u_i, u\in T_i} \mathcal F_i\otimes \mbox{Bor} (Z_u)=\mathcal F^*_i$, which means that
$$y\equiv\sum\limits_{i=1}^m(\alpha_i^c\circ \phi_{u_i,\nu}) e_i\in \mathcal Sa(\mathcal E, \mathcal B^{*}).$$  

From the definitions of the measure $P^*_{u_i,\nu}$, the random point $y$ and the estimates (\ref{eq_step4_1})-(\ref{eq_step4_2}) we obtain

\begin{equation}\label{eq_approx_auxiliary}
d_E^*(x, y)=d_E^\nu(\hat x, \hat y)<\varepsilon, \ \ \ y|_{B}=0,
\end{equation}
where
$$
 B=\bigcap\limits_{i=1}^m(\mbox{id}\times \Phi_{u_i,\nu})^{-1}(A_i^c)\in \mathcal F^*
$$
is a $P^*$-continuity set.

Finally,
$$
\begin{array}{l}
P^*(A\triangle B)= P^*\left(\bigcap\limits_{i=1}^m\Omega_i^*\triangle B\right)
= P^*\left(\left(\bigcap\limits_{i=1}^m (\mbox{id}\times\Phi_{u_i,\nu})^{-1}(\hat\Omega_i)\right)\triangle\left(\bigcap\limits_{i=1}^m(\mbox{id}\times\Phi_{u_i,\nu})^{-1}(A_i^c)\right)\right) \\
  = P^*_{u_i,\nu}\left(\bigcap\limits_{i=1}^m \hat\Omega_i\triangle\bigcap\limits_{i=1}^m A^c_i\right)\le
  \sum\limits_{i=1}^mP^*_{u_i,\nu}( \hat\Omega_i\triangle A_i^{c})< \frac{m\varepsilon}{m}=\varepsilon.
\end{array}
$$
Now we return to the projective system $\mathcal X^\nu$, which in step 1 was replaced by $\mathcal Sa(\mathcal E, \mathcal B^{*})$. We see that $y\in \mathcal Sa(\mathcal X^\nu, \mathcal B^{*})\subset \mathcal Sa(\mathcal X, \mathcal B^{*}).$ From (\ref{eq_approx_auxiliary}) we obtain
$d_X^*(x, y)<\varepsilon$ and $y|_{B}=0$, where $B$ is a $P^*$-continuity set satisfying $P^*(A\triangle B)<\varepsilon$.
The proof of the simplified version of Theorem \ref{th-approx-simple-points} is complete.

\textit{Step 5. Proof of the full version of Theorem \ref{th-approx-simple-points}.}

 Let $x, y\in \mathcal Pa(\mathcal X, \mathcal B^{*})$ and $x|_A=y|_A$ $P^*$-a.s. for some $A\in \mathcal F^*$. From the already proven simplified version we can deduce density of $\mathcal Sa(\mathcal X, \mathcal B^{*})$ in the space $\mathcal Pa(\mathcal X, \mathcal B^{*})$ by simply putting $A=\emptyset$. Pick any sequence $x_n\in \mathcal Sa(\mathcal X, \mathcal B^{*})$, $x_n\to x$ in probability $P^*$ ($n\to \infty$) and find, using Lemma \ref{lem-appr-by-continuity-sets}, a sequence of $P^*$-continuity sets $A_n\in\mathcal F^*$ such that $P^*(A\triangle A_n)\to 0$ ($n\to \infty$). Applying the simplified version of Theorem \ref{th-approx-simple-points} to $z=x-y$, $z|_A=0$ $P^*$-a.s. we find a sequence $z_n\in \mathcal Sa(\mathcal X, \mathcal B^{*})$ such that $z_n|_A=0$ $P^*$-a.s. and $z_n\to z$ in probability $P^*$ ($n\to \infty$). Put $y_n=x_n+z_n\in \mathcal Sa(\mathcal X, \mathcal B^{*})$. Clearly, $\{x_n\}$, $\{y_n\}$ and $\{A_n\}$  satisfy conditions (1)-(3) of Theorem \ref{th-approx-simple-points}. Theorem \ref{th-approx-simple-points} is proven.

\begin{remark}\label{rem-approximation-simple-points}
From steps 1-4 it follows that if the values of $x$ a.s. belong to some compact $Q_0\subset X$, then the values of its approximation $y$ may be chosen to belong to the closed convex hull $Q$ of the precompact set $\bigcup\limits_{\nu=1}^\infty\pi_\nu(Q_0)$. This remark will be used in the proof of Theorem \ref{th-extension-operators-general}.
\end{remark}

\subsection{Proof of Theorem \ref{th-extension-operators-general}}\label{sec-extension-LC}
\hfill \\
  We will use the simplified notation from Subsection \ref{sec-LC-extensions} in the proof.

  We start with constructing an extension of $h$ to the subspace $\mathcal{S}a(\mathcal X, \mathcal B^*)$ of the space $\mathcal Pa^*(X)$. By Definition \ref{def-cont-adapted-points}, any $x$ from this space can be written as   $x=\sum\limits_{i=1}^s c_iI_{A_i}$ for some $c_i\in \mathcal Pa(X)$ and disjunct subsets $A_i\in \mathcal F^*$ ($i=1,..,s$), $\bigcup\limits_{i=1}^sA_i=\Omega^*$. Define
  \begin{equation}\label{eq-def-h-0}
  h^0x=\sum\limits_{i=1}^sh(c_i)I_{A_i}
  \end{equation}
  and consider another element $y\in \mathcal{S}a(\mathcal X, \mathcal B^*)$ coinciding with $x$ on some subset $C\subset \Omega^*$. Then $y$ can be represented as  $y=\sum\limits_{k=1}^\sigma d_kI_{B_k}$ for some $d_k\in \mathcal Pa(X)$ and disjunct subsets $B_k\in \mathcal F^*$ ($k=1,..,\sigma$), $\bigcup\limits_{k=1}^\sigma B_k=\Omega^*$. By assumption,
   $x=c_i=d_k$ $P$-a.s. on each subset $A_i\cap B_k\cap C$, so that $h(c_i)=h(d_k)$ $P$-a.s. on $A_i\cap B_k\cap C$ by locality of $h$. Then
      \begin{equation}\label{eq-extension-auxil}
   \begin{array}{c}
   h^0x|_C=\sum\limits_{i=1}^sh(c_i)I_{A_i\cap C}=\sum\limits_{i=1}^s \sum\limits_{k=1}^\sigma h(c_i)I_{A_i\cap B_k\cap C}
 \\
= \sum\limits_{i=1}^s \sum\limits_{k=1}^\sigma h(d_k)I_{A_i\cap B_k\cap C}=\sum\limits_{k=1}^\sigma h(d_k)I_{B_k\cap C} \ \ P^*-\mbox{a.s.}
   \end{array}
   \end{equation}
        If $C=\Omega^*$, i.e. if $x=y$ $P^*$-a.s., then equality (\ref{eq-extension-auxil}) means that definition (\ref{eq-def-h-0}) is  up to a set $P^*$-zero measure independent of the alternative representation of $x$. If $C$ is an arbitrary subset of $\Omega^*$, then (\ref{eq-extension-auxil}) proves locality og $h^0$ on its domain.

     Next we prove uniform continuity of $h^0$ on tight subsets of the set $\mathcal{S}a(\mathcal X, \mathcal B^*)$.
    For this purpose, we fix a sequence $\alpha_\nu\in \mathcal{P}a(\mathcal Z, \mathcal B)$ ($\nu\in N$) such that  the disintegration $P^*_\omega$ of the measure $P^*$ is the limit of the sequence of the random Dirac measures $\{\delta_{\alpha_\nu}\}$ in the narrow topology and define the auxiliary probability spaces and the stochastic bases by
   \begin{equation}\label{eq-appr-stoch-bases}
     \mathcal S_\nu = (\Omega^*, \mathcal F^*, P\alpha_\nu^{-1}) \ \ \ \mbox{and} \ \ \ \mathcal B_\nu = (\Omega^*, \mathcal F^*, (\mathcal F^*_t)_{t\in T}, P\alpha_\nu^{-1}).
   \end{equation}

   As it was shown in Remark \ref{ex-extention-Dirac-measure}, for every $\nu\in N$ there exists an LC operator $h^\nu: \mathcal Pa (\mathcal X, \mathcal B_\nu)\to \mathcal P (Y, \mathcal S_\nu)$, which extends the operator $h$. By Theorem \ref{prop-unique-local-extension}, this extension is unique, so that by the construction from Remark \ref{ex-extention-Dirac-measure}
   \begin{equation}\label{eq-def-h-nu}
   h^\nu x=h(x\circ\alpha_\nu)=h\sum\limits_{i=1}^sh(c_i)I_{A_i} \ \ \ P\alpha_\nu^{-1}-\mbox{a.s.}
   \end{equation}
      for any $
   x=\sum\limits_{i=1}^sc_iI_{A_i}$, where $c_i\in \mathcal Pa(X)$ and $A_i\in \mathcal F^*$ ($i=1,..,s$) are disjunct sets with the property $\bigcup\limits_{i=1}^sA_i=\Omega^*$. This means, in particular, that $h^0x$ defined in (\ref{eq-def-h-0}) is $P\alpha_\nu^{-1}$-equivalent to $h^\nu x$.

   Pick an arbitrary tight subset $\mathcal K\subset \mathcal Sa(\mathcal X, \mathcal B^*)$ and arbitrary $\varepsilon >0$. Then there is a compact $Q_0\subset X$ such that
   $$
   P^*\{x\notin Q_0\}<\varepsilon \ \ \ \mbox{for all} \ \ \ x\in \mathcal K.
   $$
   The closed convex hull $Q$ of the set $\bigcup\limits_{n=1}^\infty \pi_n(Q_0)$ is compact as well.
As  $h$ is uniformly continuous on the tight set $\mathcal Pa(Q)\equiv \mathcal Pa(X)\cap \mathcal P(Q,\mathcal B)$,
we can find $\rho>0$ and $\delta>0$, $\delta<\varepsilon $ such that
$$
\|u-v\|_X\le \rho \ \ \ \mbox{implies} \ \ \ d_Y(hu,hv)<\varepsilon \ \ \forall u,v\in \mathcal Pa(Q),
$$
where $d_Y$ is the metric on $\mathcal P(Y).$
Choose arbitrary $x,y\in\mathcal K$ satisfying $P^*\{ \|x-y\|_X\ge \frac \rho 3\}<\frac \delta 3$ and find a sufficiently large $n_0\in N$ such that
$$
P^*\{\|\pi_{n}x-x\|_X\ge \frac\rho 3\}<\frac \delta 3 \ \ \ \mbox{and} \ \ \ P^*\{\|\pi_{n}y-y\|_X\ge \frac\rho 3\}<\frac \delta 3 \ \ \ (\forall n\ge n_0).
$$
Evidently, $P^*\{ \|\pi_{n}x-\pi_{n}y\|_X\ge \rho \}<\delta $ ($n\ge n_0$).

Now we use the construction from the proof of Theorem \ref{th-uniform-cont}, see Subsection \ref{sec-proof of uniform th} and consider the direct product $\mathcal E^n$ of two copies of the finite dimensional projective subsystem $\mathcal X^n$ defined in (\ref{eq-finite-dim-system}), the compact convex subset $W^n= \{(x_1,x_2)\in \pi_n(X)\times \pi_n(X): \ x_1,x_2\in Q, \|x_1-x_2\|_X\le \rho\}$ and the continuous projection $\phi_n: \pi_n(X)\times \pi_n(X)\to W^n$ such that the corresponding superposition operator $h_{\phi_n}$ maps $\mathcal Pa(\mathcal E^n, \mathcal B^*)$ to $\mathcal Pa(\mathcal E^n, \mathcal B^*)\cap \mathcal P(W^n, \mathcal B^*)$. Such a projection exists due to Lemma \ref{lem-projection}. Put $(u,v)=h_{\phi_n}(\pi_{n}x, \pi_{n}y).$ By construction, $\|u-v\|_X\le\rho$ and
\begin{equation}\label{eq-estimate for-u-and-v}
\begin{array}{l}
  P^*\{u\ne \pi_nx \ \& \ v\ne \pi_ny\}= P^*\{(\pi_{n}x, \pi_{n}y)\notin W^n\}\\
  \le P^*\{\pi_{n}x\notin Q\}+P^*\{\pi_{n}y\notin Q\}+P^*\{\|\pi_{n}x-\pi_{n}y\|_X\ge \rho\}\le 2P^*\{x\notin Q_0\}+\delta\\
  < 2\varepsilon +\delta < 3\varepsilon
\ \ \ (n\ge n_0).
\end{array}
\end{equation}
As $x$ and $y$ and hence $\pi_nx$, $\pi_ny$, $u$ and $v$ belong to $\mathcal{S}a(\mathcal X, \mathcal B^*)$, the sets  $\{\pi_nx\ne u\}$ and $\{\pi_ny\ne v\}$ are $P^*$-continuity subsets of $\Omega^*$, so that by Remark \ref{rem-narrow-topology}
$$
\begin{array}{c}
P\{\pi_n(x\circ\alpha_\nu)\ne u\circ\alpha_\nu\}=P\alpha^{-1}_\nu\{\pi_nx\ne u\}<\delta \ \ \ \mbox{and} \\
P\{\pi_n(y\circ\alpha_\nu)\ne v\circ\alpha_\nu\} = P\alpha^{-1}_\nu\{\pi_ny\ne v\}<\delta \ \ \ (\forall \nu\ge\nu_0)
\end{array}
 $$
for sufficiently large $\nu_0$.

For any $\nu$ the random points $u\circ\alpha_\nu$ and $v\circ\alpha_\nu$ belong to $\mathcal Pa(Q)$ and satisfy $\|u\circ\alpha_\nu-v\circ\alpha_\nu\|_X\le\rho$, so that
$$
E_\nu\min\{\|h^\nu u-h^\nu v\|;1\}=
E\min\{\|h(u\circ\alpha_\nu) -h(v\circ\alpha_\nu)\|;1\}
  <\varepsilon   \ \ \ (\nu\ge\nu_0),
$$
where $E_\nu$ is the expectation with respect to the measure $P\alpha_\nu^{-1}.$
Making use of representations (\ref{eq-def-h-0}) and (\ref{eq-def-h-nu}) and Remark \ref{rem-narrow-topology}, we can let $\nu\to\infty$ in the last estimate giving
 $$d^*(h^0 u,h^0 v)\equiv E^*\min\{\|h^0 u-h^0 v\|; 1 \}\le \varepsilon,$$ where
 $E^*$ is the expectation with respect to the measure $P^*.$
 By locality of $h^*$ and estimate (\ref{eq-estimate for-u-and-v}),
 $$
 \begin{array}{c}
 d^*(h^0 (\pi_nx), h^0(\pi_ny))\le d^*(h^0 u, h^0 v) \\
 +P^*\{\pi_nx\ne u \ \& \ \pi_ny\ne v\}
 <\varepsilon +3\varepsilon=4\varepsilon \ \ \ (\forall n\ge n_0).
 \end{array}
  $$
On the other hand, $h^0(\pi_nx)=\sum\limits_{i=1}^sh(\pi_n(c_i))I_{A_i}$ $P^*$-a.s. for some $c_i\in \mathcal Pa(X)$, see
(\ref{eq-def-h-0}). As $\pi_n(c_i))\to c_i$ in the topology of the space $\mathcal Pa(X, \mathcal B^*)$, we obtain $h^0(\pi_nx)\to h^0x$ ($n\to\infty$) in the topology of the space $\mathcal Pa^*(X)$. Similarly, $h^0(\pi_ny)\to h^0y$ ($n\to\infty$) in this topology, so that
$$d^*(h^0 x, h^0y)\le  4\varepsilon,$$
which yields uniform continuity of $h^0$ on the tight set $\mathcal K$.

In the final part of the proof, we use Theorem \ref{th-approx-simple-points}, according to which the set $\mathcal{S}a(\mathcal X, \mathcal B^*)$ is dense in the space $\mathcal{P}a^*(X)$. The operator $h^0: \ \mathcal{S}a(\mathcal X, \mathcal B^*)\to \mathcal P^*(Y)$ is uniformly continuous on tight and, thus, on precompact subsets of $\mathcal{S}a(X, \mathcal B^*)$. Therefore,  the operator $h^0$ admits a unique continuous extension $h^*: \mathcal{P}a^*(X)\to\mathcal P^*(Y)$.

To show locality of $h^*$ we pick $x,y\in\mathcal{P}a^*(X)$, $x|_A=y|_A$ for some $A\in \mathcal F^*$  and find two sequences $x_n, y_n\in \mathcal{S}a(\mathcal X, \mathcal B^*)$, again using Theorem \ref{th-approx-simple-points}, such that $x_n\to x$, $y_n\to y$ in probability and $x_n|_{A_n}=y_n|_{A_n}$ where $P^*(A\triangle A_n)\to 0$ as $n\to\infty.$ Then  $h^*x_n\to h^*x$, $h^*y_n\to h^*y$ and $I_{A_n\cap A}\to I_{A}$ in probability, so that
$(h^*x_n)I_{A_n\cap A}\to (h^*x)I_A$ and $(h^*y_n)I_{A_n\cap A}\to (h^*y)I_A$ in probability
 as $n\to\infty$. Therefore, $hx|_A=hy|_A$ $P^*$-a.s., and the operator $h^*$ is local.

Due to Theorem \ref{th-uniform-cont}, uniform continuity of $h^*$ on arbitrary tight subsets of the space $\mathcal Pa^*(X)$ is equivalent to uniform continuity on any set $\mathcal K_0=\mathcal Pa^*(X)\cap \mathcal P(Q_0, \mathcal S^*)$, where $Q_0\subset X$ is compact. The closed convex hull $Q$ of the set $\bigcup\limits_{n=1}^\infty \pi_n(Q_0)$ is again compact in $X$, so that $h^0$ is uniformly continuous on the set $\mathcal K=\mathcal{S}a(\mathcal X, \mathcal B^*)\cap \mathcal P(Q, \mathcal S^*)$ and hence on  its closure in the topology of the space $\mathcal Pa^*(X)$. On the other hand, this closure contains the set $\mathcal K_0$, because by Theorem \ref{th-approx-simple-points} and Remark
\ref{rem-approximation-simple-points}, for any $x\in \mathcal K_0$ there exist $$x_n\in \mathcal{S}a(\mathcal X, \mathcal B^*)\cap \mathcal P(\pi_n(Q), \mathcal F^*)\subset \mathcal K$$
that converges to $x$. Hence $h^*$ is uniformly continuous on $\mathcal K_0$. The theorem is proven.

\subsection{Proof of Theorem \ref{th-finite-dim}}\label{sec-proof-finite-dim}
\hfill \\
    Using the LC operator $h_\phi$  from Lemma \ref{lem-projection} we can replace $h$ by $h\circ h_\phi$, thus obtaining an LC operator mapping the space $\mathcal Pa (\mathcal X, \mathcal B)$ to the subset $\mathcal Pa (U)$. Moreover, taking advantage of the construction used in the proof of Lemma \ref{lem-isomorphism} we can replace the operator $h$ by the LC operator $h_G^{-1}\circ h \circ h_G$, where  the linear topological isomorphism $h_G: \ \mathcal Pa(\mathcal X, \mathcal B)\to \mathcal Pa(\mathcal E, \mathcal B_m)$ generated by the linear isomorphism $G:  X\equiv X_b\to E_m\equiv E$, $\mathcal E=(E_i, p^{ji}, T(m))$ is a projective system of Euclidean spaces $E_i=\{(x_1,...,x_i, 0, ...,0)\}$, $p^{ji}$ ($i\ge j$) is the orthogonal projection, which removes the coordinates $(x_{j+1},..., x_{i})$,  and  $\mathcal B_m =(\Omega , {\mathcal F}, ({\mathcal F}_i)_{i\in {T}_m}, P)$ is a finite stochastic basis.
Therefore, we can assume that $U\subset E$ and
  $h: \ \mathcal Pa(E)\to \mathcal Pa(U)$, where $\mathcal Pa(E)\equiv \mathcal Pa(\mathcal E, \mathcal B_m).$

    The idea of the proof is to study the spaces $\mathcal Pa(E_i, \mathcal F_i)$ ($i=0,...,m$) by induction applying Lemma \ref{th-Nemytskii} at each step, so that we obtain the statement of the theorem at $i=m$. The problem here is that the operator $h$ is not supposed to be Volterra, so that a priori there exist no "truncated versions" $h_i$ of it defined on $\mathcal Pa(E_i, \mathcal F_i)$. However, we will show that using locality of $h$ gives us opportunity to partially define the operators $h_i$ and thus find a sequence of "partial" fixed points in the subspaces $\mathcal Pa(E_i, \mathcal F_i)$.

  \textit{Step 1: Coincidence set of two $\sigma$-algebras.} Let $\mathcal G\subset \mathcal G'\subset \mathcal F$ be two complete $\sigma$-algebras with respect to the measure $P$. Consider the family $\mathcal O$ of subsets $A\in\mathcal{G}$ for which $\mathcal G\cap A=\mathcal G'\cap A$ and let $\gamma=\sup\limits_{A\in \mathcal O}PA$. Pick any sequences $\gamma_n\to\gamma$ for which there is $A_n\in\mathcal O$ with $PA_n=\gamma_n$ and define $\hat\Omega=\bigcup_{n=1}^\infty A_n$. Clearly, $P\hat\Omega=\gamma$ and $\mathcal G\cap \hat\Omega=\mathcal G'\cap \hat\Omega$. If $B\in\mathcal O$ is arbitrary, then $\hat\Omega\cup B\in\mathcal O$, so that $P(\hat\Omega\cup B)\le\gamma$ and hence $P(B-\hat\Omega)0=$. We have proven that $\hat\Omega$ is the largest (up to a zero measure) set belonging to $\mathcal O$. We will call this set the coincidence set of the  $\sigma$-algebras $\mathcal G\subset \mathcal G'$. By the definition, if $C\in \mathcal G'-\mathcal G$ and $PC>0$, then there is a subset $C\in \mathcal G'$ such that $PC'>0$ and $\hat\Omega\cap C'=\emptyset$. For the pair $\mathcal F_i$ and $\mathcal F_{i+1}$ from the above finite filtration we fix $\hat\Omega_i\in \mathcal F_i$ ($i=0,...,m$) to be one of the realisations of the coincidence sets  of the $\sigma$-algebras $\mathcal F_i$ and $\mathcal F_{i+1}$. We also assume by definition that $\hat\Omega_m=\emptyset$, which is formally possible to achieve if we define $\mathcal F_{m+1}=\mathcal F_m\otimes \mbox{Bor}\,[0,1]$ and equip $\mbox{Bor}\,[0,1]$ with the Lebesgue measure.

  \textit{Step 2: Construction of auxiliary (truncated) local operators.} Let us first introduce truncated spaces of adapted random points. For any $i=0,...,m$ we put $\Omega_i=\Omega-\hat\Omega_i\in \mathcal F_i$ and let $\mathcal P_i$ consist of all $x: \Omega_i \to E_i$ for which there exists  $\tilde x\in\mathcal Pa(E)$ such that $x= p^i(\tilde x)|_{\Omega_i}$ where $p^i\equiv p^{im}$. Below we show that the operator $h$ induces LC operators $h_i: \ \mathcal P_i\to \mathcal P_i$ by the formula
  \begin{equation}\label{eq-induced-local-op}
    h_ix=(p^i\circ h)(\tilde x)|_{\Omega_i} \ \ \mbox{where} \ \ x= p^i(\tilde x)|_{\Omega_i} \ \ \mbox{and} \ \ x\in\mathcal Pa(E).
  \end{equation}
  Let us check the following property:
\begin{equation}\label{eq-second finte dim -1}
    p^i(\tilde x)|_A=p^i(\tilde y)|_A, \ (\tilde x, \tilde y\in \mathcal Pa(E), \ A\in\mathcal F_{i}, \ A\subset\Omega_i) \ \ \mbox{implies} \ \ p^i\circ h(\tilde x)|_{A}=p^i\circ h(\tilde y)|_{A} \ \mbox{a.s.}
\end{equation}

   Assume, on the contrary, that the last equality is not fulfilled on a subset of $A$ of a positive measure. By the definition of the set $\Omega_i$ as the complement of the coincidence set $\hat\Omega_i$, there exists a set $B\subset A$, $B\in \mathcal F_{i+1}-\mathcal F_{i}$, of a positive measure such that
   \begin{equation}\label{eq-second finte dim -2}
     p^i(\tilde x(\omega))= p^i(\tilde y(\omega)) \ \ \mbox{and} \ \ (p^i\circ h)(\tilde x(\omega))\ne (p^i\circ y)(\tilde y(\omega)) \ \ \ (\omega\in B).
   \end{equation}
Put $\tilde z =\tilde x$ on $B\in \mathcal F_i$ and $\tilde z=\tilde y$ on $\Omega - B$. We claim that $\tilde z \in \mathcal Pa(E_m)$. Indeed, if $k>i$, then $p^k(\tilde z)=p^k(\tilde x)$ on $B\in \mathcal F_i$ and $p^k(\tilde z)=p^k(\tilde y)$ on $\Omega - B$. Hence  $p^k(\tilde z)$ is $\mathcal F_k$-measurable for $k>i$. If $j\le i$, then $p^j(\tilde z)=p^{ji}(p^i(\tilde y))$ on $\Omega-B$ and $p^j(\tilde z)=p^{ji}(p^i(\tilde x))=p^{ji}(p^i(\tilde y))$ on $B$, so that $p^j(\tilde z)= p^{ji}(p^i(\tilde y))$ on $\Omega$ and therefore $p^j(\tilde z)$ is $\mathcal F_j$-measurable for $j\le i$. Thus, $\tilde z \in \mathcal Pa(E)$.

By locality of $h$, $(p^i\circ h)(\tilde z)=(p^i\circ h)(\tilde x)$ a.s. on $B$ and  $(p^i\circ h)(\tilde z)=(p^i\circ h)(\tilde y)$ a.s. on $\Omega-B$. Therefore,  $(p^i\circ h)(\tilde z)\ne (p^i\circ h)(\tilde x)$ a.s. on $A-B$
by (\ref{eq-second finte dim -2}). Hence
$$
\begin{array}{c}
D\equiv \{(\omega, u): \ \omega\in B, \ y=(p^i\circ h)(\tilde x(\omega))\}
\\
= \{(\omega, u): \, \omega\in A, \ y=(p^i\circ h)(\tilde x(\omega))\}\!\cap\! \{(\omega, u): \, \omega\in A, \, y=(p^i\circ h)(\tilde z(\omega))\}\in \mathcal F_i\otimes \mbox{Bor}\, (E_i) \, \mbox{a.s}.
\end{array}
$$
By the well-known measurable projection theorem \cite{Wagner},  $B=\{\omega\in\Omega: \exists u\in E_i \, | \, (\omega, u)\in D\}\in \mathcal F_i.$ But this contradicts the assumption $B\in \mathcal F_{i+1}-\mathcal F_{i}$. We have proven
(\ref{eq-second finte dim -1}).

This property with $A=\Omega_i$ guarantees that the following operator is well-defined on the set $\mathcal P_i$:
$$x\in \mathcal P_i \ \  \Rightarrow \ \ h_ix\equiv p^i\circ h(\tilde x)|_{\Omega_i},\ \ \mbox{where} \ \ x= p^i(\tilde x)|_{\Omega_i}, \tilde x\in\mathcal Pa(E).$$
The same property with an arbitrary $A\subset \Omega_i$ yields locality of the operator $h_i$ on $\mathcal P_i$.
Continuity of $h_i$ follows from the fact that $\mathcal P_i$ is topologically embedded in $\mathcal{P}(E, \mathcal S)$ via the map $x=(\eta_1,...,\eta_i)\mapsto (\eta_1,...,\eta_i, 0,...,0)I_{\Omega_k}$.
Note that by construction, $\hat\Omega_m=\emptyset$, so that $\Omega_m=\Omega$ and $h_m=h.$

From  (\ref{eq-induced-local-op}) and (\ref{eq-second finte dim -1}) we also have the following property:
\begin{equation}\label{eq-second finte dim -3}
  h_j(x_j)(\omega)=p^{ji}h_i(x_i)(\omega) \ \mbox{a.s. on} \ \Omega_i\cap\Omega_j \ \ \mbox{if} \ \ \ x_j=p^{ji}x_i \ \ (i\ge j).
\end{equation}

\textit{Step 3. Construction of partial fixed points.}

The statement to be proven by induction:

\textit{For any $i$ ($0\le i\le m$) there exist $\tilde x_j\in\mathcal Pa(E) $ ($0\le j\le i$) such that $x_{ij}\equiv p^j(\tilde x_i)|_{\Omega_j}$ is a fixed point of the operator $h_j$ for all $0\le j\le i$.}

The statement is trivial for $i=0$ and it is equivalent to  Theorem \ref{th-finite-dim} if $i=m$. Assume that it is true for some $0\le i<m$ and check that it is also true for $i+1$.

Define the sets
\begin{equation}\label{eq-second finte dim -4}
W_j(\omega)=\left\{ \begin{array}{c}
                    p^{i+1}(V)  \ \ \mbox{if} \ \ \omega\in \hat\Omega_j=\Omega-\Omega_j\\
                    (p^{i+1,j})^{-1}(x_{ij}(\omega))\cap  p^{i+1}(V) \ \ \mbox{if} \ \ \omega\in \Omega_j
                  \end{array}
\right.
\end{equation}
and put $U_{i+1}(\omega)=\bigcap_{j=0}^i W_j(\omega).$ By construction, $\{(\omega, W_j(\omega)) \ : \ \omega\in\Omega\}\in \mathcal F_{i+1}\otimes \mbox{Bor}\, (E_{i+1})$.
For the set $\mathcal P(U_{i+1})$ consisting of all $\mathcal F_{i+1}$-measurable random points $\omega\mapsto S_{i+1}(\omega)$ we check that
$\mathcal P(S_{i+1})= \mathcal Pa(E_{i+1}, \mathcal B_{i+1})$, where $\mathcal B_{i+1}\equiv (\Omega, (\mathcal F_j)_{0\le j\le i+1}, \mathcal F, P)$  is the truncated stochastic basis.

Pick any $z\in \mathcal P(S_{i+1})$ and $j\le k \le i$. Then for any $\omega\in\Omega_k$, we have $$p^{k,i+1}(z(\omega))=x_{ik}(\omega)=p^k(\tilde x_i(\omega)),$$ and therefore
$$
p^{j,i+1}(z(\omega))=(p^{jk}\circ p^{k,i+1})(z(\omega))=(p^{jk}\circ p^k)(\tilde x_i(\omega))=p^j(\tilde x_i(\omega))
$$
($\omega\in \mathcal O_j\equiv \bigcup\limits_{j\le k \le i}\Omega_k$). The set  $\mathcal O_j$ belongs to $\mathcal F_j$, because the $\sigma$-algebras $\mathcal F_k$ ($j\le k \le i+1$) coincide, by construction, on its complement $\Omega-\mathcal O_j=\bigcap\limits_{j\le k \le i}\hat\Omega_k$. Therefore, $p^{j,i+1}(z)|_{\mathcal O_j}=p^j(\tilde x_i)|_{\mathcal O_j}$ is $\mathcal F_j$-measurable for any $j\le i$. On the other hand, $p^{j,i+1}(z)|_{\Omega-\mathcal O_j}$ is $\mathcal F_j$-measurable as well: as it was already mentioned, $\mathcal F_{i+1}=\mathcal F_i=...= \mathcal F_j$ on this subset. We have proven that $z\in Pa(E_{i+1}, \mathcal B_{i+1})$.

From (\ref{eq-second finte dim -3}) it follows that the LC operator
$$
(Hz)(\omega)=\left\{
\begin{array}{c}
z(\omega) \ \ \mbox{if} \ \ \omega\in \hat\Omega_{i+1}
  \\
 (h_{i+1}z)(\omega)  \ \ \mbox{if} \ \ \omega\in \Omega_{i+1}
\end{array}
\right.
$$
leaves the subset $\mathcal P(S_{i+1})$ invariant.

Finally, we observe that the operator $H$ satisfies the assumption of Lemma \ref{th-Nemytskii}, so that there exists $\tilde x_{i+1}\in \mathcal P(S_{i+1})= \mathcal Pa(E_{i+1}, \mathcal B_{i+1})\subset \mathcal Pa (E)$, for which $H(x_{i+1})=x_{i+1}$ a.s. By construction, $x_{i+1}|_{\Omega_{i+1}}$ is a fixed point of the operator $h_{i+1}$. On the other hand, due to (\ref{eq-second finte dim -4}) we have that $$p^j(\tilde x_{i+1})|_{\Omega_j}=(p^{j,i+1}\circ p^{i+1})(\tilde x_{i+1})|_{\Omega_j}|=x_{ij}|_{\Omega_j},$$
which by assumption is a fixed point of the operator $h_j$. The induction argument, and hence the proof of Theorem \ref{th-finite-dim}, is complete.

\section{Some additional properties of local operators}\label{sec-Local-Op}


Unlike the results of Appendix B, the propositions collected in Appendix C are only used in the examples of Appendix D and not in the proof of the fixed-point theorems for LC operators.

Below it is assumed that $\mathcal S$ is a complete probability space (\ref{eq-probability-space}), $\mathcal B$ is a stochastic basis (\ref{eq-stoch-basis}) on it,
$\mathcal X=(X_t, p^{ut}, {T})$ is a projective system of separable Banach spaces satisfying Property $(\Pi)$, the associated finite dimensional linear Volterra operators being $\pi _n$.

\begin{proposition}\label{prop-tight-operators}
A local operator $h:\ {\mathcal P}a(\mathcal X, \mathcal B)\to {\mathcal P}(Y, \mathcal S)$ is tight if it is tight on any subset ${\mathcal P}a(\mathcal X, \mathcal B)\cap {\mathcal P}(B_r, \mathcal S)$ where $B_r=\{x\in X: \ \Vert x\Vert_X\le r\}$.
\end{proposition}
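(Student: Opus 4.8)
The plan is to check the two defining properties of a tight operator (Definition \ref{def-tight-oper}) for $h$ on the whole space $\mathcal Pa(\mathcal X, \mathcal B)$, deducing each from the corresponding property on the ball-restricted sets $\mathcal Pa(\mathcal X, \mathcal B)\cap \mathcal P(B_r, \mathcal S)$ granted by the hypothesis. The uniform-continuity requirement 2) is the routine half. Every compact $Q\subset X$ is bounded, hence contained in some $B_r$, so $\mathcal Pa(\mathcal X, \mathcal B)\cap \mathcal P(Q, \mathcal S)$ is a tight subset of $\mathcal Pa(\mathcal X, \mathcal B)\cap \mathcal P(B_r, \mathcal S)$. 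Since $h$ is tight on the latter, it is in particular uniformly continuous on this tight subset, i.e. on $\mathcal Pa(\mathcal X, \mathcal B)\cap \mathcal P(Q, \mathcal S)$. As $\mathcal X$ satisfies Property $(\Pi)$, Theorem \ref{th-uniform-cont} applies, and its condition (2) --- uniform continuity on every set $\mathcal Pa(\mathcal X, \mathcal B)\cap \mathcal P(Q, \mathcal S)$ with $Q$ compact --- is equivalent to condition (1), uniform continuity on every tight subset of $\mathcal Pa(\mathcal X, \mathcal B)$. This is exactly requirement 2).

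For requirement 1) I would fix a bounded set $\mathcal A\subset \mathcal Pa(\mathcal X, \mathcal B)$ and $\varepsilon>0$. Boundedness furnishes $r$ with $P\{\|x\|_X\ge r\}<\varepsilon/2$ for all $x\in\mathcal A$; applying the hypothesis to the bounded set $\mathcal Pa(\mathcal X, \mathcal B)\cap \mathcal P(B_r, \mathcal S)$ itself, its (tight) image produces a compact $Q\subset Y$ with $P\{hx'\notin Q\}<\varepsilon/2$ for every $x'\in\mathcal Pa(\mathcal X, \mathcal B)\cap \mathcal P(B_r, \mathcal S)$. The decisive step is then to attach to each $x\in\mathcal A$ an \emph{adapted} truncation $x'\in\mathcal Pa(\mathcal X, \mathcal B)\cap \mathcal P(B_r, \mathcal S)$ that coincides with $x$ on $\{\|x\|_X<r\}$. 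Granting this, locality of $h$ forces $hx=hx'$ on $\{\|x\|_X<r\}$, so that $P\{hx\notin Q\}\le P\{\|x\|_X\ge r\}+P\{hx'\notin Q\}<\varepsilon$ uniformly in $x\in\mathcal A$, which is precisely tightness of $h(\mathcal A)$. It is worth noting that when $\Omega$ is a single point this truncation is vacuous, a bounded set then lying inside a genuine ball; the whole difficulty is stochastic.

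The construction of $x'$ is the main obstacle, because the naive formula $x\,I_{\{\|x\|_X<r\}}$ is not adapted (the event $\{\|x\|_X<r\}$ need not be $\mathcal F_t$-measurable) and the radial retraction onto $B_r$ is not a generalized Volterra map. I would build $x'$ through Property $(\Pi)$: let $\pi_n\to\mathrm{id}$ be the finite-dimensional Volterra maps, so that $\pi_n x$ is adapted (Remark \ref{rem-Volterra}) with values in $X^n=\pi_n(X)$. Because $B_r\cap X^n$ is convex and compact, Lemma \ref{lem-projection} supplies a continuous Volterra projection $\phi_n\colon X^n\to B_r\cap X^n$ fixing $B_r\cap X^n$ pointwise and whose superposition operator preserves adaptedness; thus each $u_n:=\phi_n(\pi_n x)$ lies in $\mathcal Pa(\mathcal X, \mathcal B)\cap \mathcal P(B_r, \mathcal S)$. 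On $\{\|x\|_X<r\}$ one has $\pi_n x\to x$ with $\pi_n x\in B_r\cap X^n$ for $n$ large, whence $u_n=\pi_n x\to x$; so the $u_n$ already recover $x$ exactly on the relevant event.

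The delicate point I expect to spend the real effort on is turning the sequence $(u_n)$ into a single adapted limit: the $u_n$ converge to $x$ on $\{\|x\|_X<r\}$ but need not converge on $\{\|x\|_X\ge r\}$, and one cannot simply glue a fixed $B_r$-valued point there without destroying adaptedness. I would resolve this by choosing the projections $\phi_n$ coherently along a nested exhaustion $X^n\subset X^{n+1}$, so that $\phi_n\circ\pi_n$ converges strongly on all of $X$ to an adapted (Volterra) retraction $\Phi_r\colon X\to B_r$ fixing the interior of $B_r$; then $x'=\Phi_r\circ x=\lim_n u_n$ is adapted (as $\mathcal Pa(\mathcal X, \mathcal B)$ is closed in $\mathcal P(X, \mathcal S)$), is $B_r$-valued, and equals $x$ on $\{\|x\|_X<r\}$, as required. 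For the standard systems $C(T)$ and $L^r(T)$ this retraction is the causal (stopping-time) truncation at the first exit of the running norm from $B_r$, which makes the existence of $\Phi_r$ transparent and serves as the model for the general coherent construction.
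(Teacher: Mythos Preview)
Your treatment of uniform continuity (requirement 2) via Theorem \ref{th-uniform-cont} is correct and matches the paper's one-line reduction.

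For requirement 1), however, there is a genuine gap. Your argument hinges on building a single adapted Volterra retraction $\Phi_r\colon X\to B_r$ as a strong limit of the $\phi_n\circ\pi_n$. In the abstract setting of the paper this is not justified: Property $(\Pi)$ does not guarantee a nested exhaustion $X^n\subset X^{n+1}$, the projections $\phi_n$ produced by Lemma \ref{lem-projection} are built coordinate-by-coordinate in a Euclidean model and carry no coherence across different $n$, and there is no reason the sequence $\phi_n\circ\pi_n$ should converge off $B_r$. Your appeal to the concrete cases $C(T)$ and $L^r(T)$ (causal stopping at first exit) is illuminating but is an extra structure not available for a general projective system with Property $(\Pi)$. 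So the ``delicate point'' you flag is not merely delicate; as written it is unproven.

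The paper sidesteps this entirely by \emph{not} constructing a single truncation $x'$. Instead, for each $x\in\mathcal A$ it first picks $n=n(x)$ so large that $P\{\|h(\pi_n x)-hx\|_Y\ge\sigma\}<\varepsilon$ (pointwise continuity of $h$ suffices here), and only then applies Lemma \ref{lem-projection} inside the finite-dimensional $X^n$ to get an adapted $x_r\in\mathcal Pa(\mathcal X,\mathcal B)\cap\mathcal P(\pi_n(B_r),\mathcal S)$ agreeing with $\pi_n x$ on $\{\pi_n x\in\pi_n(B_r)\}$. Locality gives $P\{hx_r\ne h(\pi_n x)\}<\varepsilon$. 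The key observation is that although $n$ depends on $x$, all the $x_r$ land in the fixed bounded set $D_r=\bigcup_n\pi_n(B_r)$ (bounded by Banach--Steinhaus), so the hypothesis yields a single compact $G\subset Y$ with $P\{hx_r\notin G\}<\varepsilon$ uniformly. Combining the three estimates gives $P\{hx\notin G_\sigma\}<3\varepsilon$, and Remark \ref{rem_def_tight_sets} finishes. The moral: let $n$ depend on $x$ and avoid the limit altogether.
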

\begin{proof}
  Due to Proposition \ref{th-uniform-cont} we only have to prove that the set $h({\mathcal M})$ is tight for any bounded  ${\mathcal M}\subset {\mathcal P}%
a(X)$.
Taking arbitrary $\varepsilon ,\sigma >0$ we can find $r>0$ such that the inequality $P\{x\notin B_r\}<%
{\varepsilon }$ holds for all $x\in {\mathcal M}$. There exists $n$ (depending on $x$) such that
\[
\mbox{\bf P}\Vert h(\pi _nx)-hx\Vert_Y \ge \sigma \}<\varepsilon.
\]
By Lemma \ref{lem-projection}, there exists $x_r\in \mathcal Pa(\mathcal X, \mathcal B)\cap \mathcal P(\pi_n(B_r), \mathcal S)$ with the property $(\pi_nx)(\omega)=x_r(\omega)$ if $(\pi_nx)(\omega)\in \pi_n(B_r)$. Therefore,
$P\{x_r\ne \pi _nx\}<\varepsilon $.
This property and locality
of $h$ yield
$
P\{hx_r\ne h(\pi _nx)\}<\varepsilon .
$
The strong convergence of  $\{\pi_n\}$ to the identity map in $X$ implies boundedness of the set $D_r=\cup_{n\in N}\pi_n(B_r)$.
By assumption, $h$ maps the set ${\mathcal P}a(\mathcal X, \mathcal B)\cap {\mathcal P}(D_r, \mathcal S)$ into a tight subset of ${\mathcal P}a(\mathcal X, \mathcal B)$. Therefore, there exists a compact $G\subset X$ for which
\[
P\{hy\notin G\}<\varepsilon \ \ \mbox{for
all}\ \ y\in {\mathcal P}a(\mathcal X, \mathcal B)\cap {\mathcal P}(D_r, \mathcal S).
\]
In particular, this is satisfied for $y=x_r$ and, denoting
the $\sigma$-neighborhood of $G$ by $G_\sigma$, we get
\[
P\{hx\notin G_\sigma \}\le P\{hx_r\notin G\}+
P\{\Vert h(\pi _nx)-hx_r\Vert_Y \ge \sigma \}+\mbox{ P}\{hx_r\ne
h(\pi _nx)\}<3\varepsilon .
\]
This property and Remark \ref{rem_def_tight_sets} yield tightness of the set $h{\mathcal M}$.
\end{proof}

\begin{proposition}
\label{prop-sequence-tight-op} Suppose that the sequence of local and tight operators $h_n: {\mathcal P}a(\mathcal X, \mathcal B)\to
{\mathcal P}(Y, \mathcal S)$ ($n\in N$) converges to an operator $h: {\mathcal P}a(\mathcal X, \mathcal B)\to
{\mathcal P}(Y, \mathcal S)$ uniformly on any subset ${\mathcal P}a(\mathcal X, \mathcal B)\cap {\mathcal P}(B_r, \mathcal S)$ as $n\to\infty$. Then
$h$ is local and tight as well.
\end{proposition}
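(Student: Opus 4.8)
The plan is to check separately the two requirements in Definition \ref{def-tight-oper}: that the limit operator $h$ is local, and that it is tight. For locality I would argue that the defining implication passes through a pointwise limit in probability, and for tightness I would reduce to Proposition \ref{prop-tight-operators} by verifying that $h$ is tight on each bounded ``ball slice'' $\mathcal M_r:=\mathcal Pa(\mathcal X,\mathcal B)\cap\mathcal P(B_r,\mathcal S)$.

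\textbf{Locality.} Fix $x,y\in\mathcal Pa(\mathcal X,\mathcal B)$ and $A\subset\Omega$ with $x|_A=y|_A$ a.s. Since each $h_n$ is local, $h_nx|_A=h_ny|_A$ a.s.\ for every $n$. Because $h_nx\to hx$ and $h_ny\to hy$ in probability, the restrictions to $A$ also converge in probability, and as the two sequences coincide on $A$ their limits must agree, whence $hx|_A=hy|_A$ a.s. The delicate point is that the hypothesis only furnishes uniform convergence on the bounded sets $\mathcal M_r$, so for generic (possibly unbounded) $x,y$ I would first truncate: for each $r$ use the Volterra projection of Lemma \ref{lem-projection} to produce \emph{adapted} points valued in a ball that coincide with $x,y$ on $A\cap\{\|x\|_X\le r\}\cap\{\|y\|_X\le r\}$, apply the previous argument on that bounded slice, and then let $r\to\infty$ to recover the coincidence on all of $A$.

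\textbf{Tightness.} Having $h$ local, by Proposition \ref{prop-tight-operators} it suffices to show $h$ is tight on each $\mathcal M_r$, i.e.\ (1) $h(\mathcal M_r)$ is tight and (2) $h$ is uniformly continuous on every tight subset of $\mathcal M_r$. For (1), fix $\varepsilon,\sigma>0$; uniform convergence on $\mathcal M_r$ gives an $n$ with $\sup_{x\in\mathcal M_r}d_Y(h_nx,hx)$ so small that, by the elementary estimate $P\{\|u-v\|_Y\ge\sigma\}\le d_Y(u,v)/\min\{\sigma,1\}$, one has $P\{\|h_nx-hx\|_Y\ge\sigma\}<\varepsilon/2$ for all $x\in\mathcal M_r$. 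Since $h_n$ is tight and $\mathcal M_r$ is bounded, $h_n(\mathcal M_r)$ is tight, so there is a compact $Q\subset Y$ with $P\{h_nx\notin Q\}<\varepsilon/2$ uniformly in $x\in\mathcal M_r$; then $P\{hx\notin Q_\sigma\}<\varepsilon$ for all $x\in\mathcal M_r$, which is tightness of $h(\mathcal M_r)$ by the criterion of Remark \ref{rem_def_tight_sets}. For (2), let $\mathcal K\subset\mathcal M_r$ be tight; each $h_n$ is uniformly continuous on $\mathcal K$ by tightness of $h_n$, and $h_n\to h$ uniformly on $\mathcal K\subset\mathcal M_r$, so the standard three-$\varepsilon$ split $d_Y(hx,hy)\le d_Y(hx,h_nx)+d_Y(h_nx,h_ny)+d_Y(h_ny,hy)$ yields uniform continuity of $h$ on $\mathcal K$. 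Thus $h$ is tight on every $\mathcal M_r$, and Proposition \ref{prop-tight-operators} gives that $h$ is tight.

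The hard part is the locality step for unbounded random points, precisely because the assumed convergence is only uniform on the bounded slices $\mathcal M_r$. The obstacle is genuine since a naive truncation $x\,I_{\{\|x\|_X\le r\}}$ need not be adapted (the cut-off event lies in $\mathcal F_b$ but not necessarily in $\mathcal F_t$), which is why I would route the truncation through the adaptedness-preserving Volterra projection of Lemma \ref{lem-projection}. Everything in the tightness part—the transfer of tight-range and of uniform continuity across a uniform limit—is then routine once Proposition \ref{prop-tight-operators} is available.
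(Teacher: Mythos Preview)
Your tightness argument is essentially the paper's: show $h(\mathcal M_r)$ is tight via a single $h_m$ and the $\sigma$-neighborhood trick (Remark \ref{rem_def_tight_sets}), show uniform continuity on tight subsets of $\mathcal M_r$ via the three-term split, and then invoke Proposition \ref{prop-tight-operators}. The paper organises it slightly differently (it first lifts uniform continuity on $\mathcal C_r$ to all tight subsets via Theorem \ref{th-uniform-cont}, then handles the image), but the content is the same.

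For locality the paper does exactly your first paragraph: from $h_nx|_A=h_ny|_A$ it passes to the limit in probability to get $hx|_A=hy|_A$. It simply reads the hypothesis ``$h_n$ converges to $h$'' as including pointwise convergence $h_nx\to hx$ for every $x\in\mathcal Pa(\mathcal X,\mathcal B)$, with the uniformity on $\mathcal M_r$ being an additional quantifier. Your worry is legitimate under a stricter reading (indeed, without pointwise convergence one can cook up an $h$ that is arbitrary on unbounded points and not local), but the paper's intended hypothesis makes the truncation step unnecessary.

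If you do want to carry out the truncation, note that Lemma \ref{lem-projection} is stated only for \emph{finite-dimensional} projective systems, so it does not apply directly to $B_r\subset X$. The repair is to first push through the finite-dimensional Volterra maps $\pi_n$ of Property $(\Pi)$: since $\pi_n$ is linear and deterministic, $\pi_nx|_A=\pi_ny|_A$, both points are adapted and $X^n$-valued, and \emph{then} Lemma \ref{lem-projection} applied to $B_r\cap X^n$ yields bounded adapted approximants still coinciding on $A$. Combined with continuity of $h$ (which you have already established on tight sets, hence at points), this would close the gap you identified.
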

\begin{proof}
If $x, y \in {\mathcal P}a(\mathcal X, \mathcal B)$ and $xI_A=yI_A$ for some $A\in \mathcal F$, then $(h_nx)I_A=(h_ny)I_A$ for all $n\in N$, as all $h_n$ are local. Therefore, $(hx)I_A=(hy)I_A$, because $\{(h_nx)I_A\}$ and $\{(h_ny)I_A\}$ converge in probability to $(hx)I_A$ and $(hy)I_A$, respectively. Hence $h$ is local.

 The operators $h_n$  are tight and hence uniformly continuous on any subset $\mathcal C_r\equiv {\mathcal P}a(\mathcal X, \mathcal B)\cap {\mathcal P}(B_r, \mathcal S)$ (which is tight). Then so is the operator $h$, as the sequence $\{h_n\}$ converge uniformly to $h$ uniformly on $\mathcal C_r$. Applying Proposition \ref{th-uniform-cont} yields uniform continuity  of $h$ on an arbitrary tight subset of its domain.

It remains to prove that $h$ maps bounded subsets of ${\mathcal P}a(\mathcal X, \mathcal B)$ into tight subsets of ${\mathcal P}(Y, \mathcal S)$. According to Proposition \ref{prop-tight-operators} it is sufficient to check that $h(\mathcal C_r)$ is tight for all $r>0$. Using again uniform convergence of $\{h_n\}$ on $\mathcal C_r$, we find, for any
$\varepsilon>0$ and $\sigma>0$, a number $m\in N$ such that
${P}\{\|hx - h_mx\|_Y\ge\sigma\}<\varepsilon $ whenever $x\in
\mathcal C_r$. As $h_m$ is tight, there exists a compact subset $K\subset Y$ such that
${P}\{h_nx \notin K\}
<\varepsilon $ for all $x\in \mathcal C_r$. Therefore,
${P}\{hx\notin K_{\sigma}\}<2\varepsilon, $
where $K_\sigma$ is the $\sigma$-neighborhood of $K$. As $\varepsilon>0$ and $\sigma>0$ were arbitrary, the set $h(\mathcal C_r)$ is tight by Remark \ref{rem_def_tight_sets}.
\end{proof}

\begin{proposition}\label{prop-transform-tight-sets}
If a local operator $h:\ {\mathcal P}a(\mathcal X, \mathcal B)\to {\mathcal P}(Y, \mathcal S)$ is uniformly
continuous on any tight subset of its domain, then $h$ maps tight
sets into tight ones.
\end{proposition}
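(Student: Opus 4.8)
The plan is to mimic the structure of the proof of Proposition~\ref{prop-tight-operators}: I would first use the finite-dimensional Volterra approximations $\pi_n$ from Property~$(\Pi)$ to reduce the assertion to a finite-dimensional, compact-valued statement, and then settle that case by approximating adapted random points by adapted \emph{simple} points.

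For the reduction, let $\mathcal K$ be tight and fix $\varepsilon,\sigma>0$. I would pick a compact $Q_0\subset X$ with $P\{x\notin Q_0\}<\varepsilon$ for all $x\in\mathcal K$ and let $\bar Q$ be the closed convex hull of the precompact set $\bigcup_n\pi_n(Q_0)$, so that $\bar Q$ is compact and convex, $Q_0\subset\bar Q$, and $P\{\pi_n x\notin\bar Q\}\le P\{x\notin Q_0\}<\varepsilon$ for every $n$ and $x\in\mathcal K$. Since $\mathcal K\cup\{\pi_n x:x\in\mathcal K,\ n\in N\}$ is again tight and $\pi_n x\to x$ uniformly on $\mathcal K$ (uniform convergence on compacts, as in the proof of Theorem~\ref{th-fixed-point-general}), uniform continuity of $h$ on this tight set gives a single index $n$ with $P\{\|h(\pi_n x)-hx\|_Y\ge\sigma\}<\varepsilon$ for all $x\in\mathcal K$. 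Working in the finite-dimensional system $\mathcal X^n$ of (\ref{eq-finite-dim-system}), Lemma~\ref{lem-projection} supplies a continuous projection $\phi$ onto the compact convex set $\pi_n(\bar Q)$; putting $x_r=h_\phi(\pi_n x)\in\mathcal Pa(\pi_n(\bar Q))$ I get $P\{x_r\ne\pi_n x\}<\varepsilon$ and hence, by locality, $P\{hx_r\ne h(\pi_n x)\}<\varepsilon$. Once I know that $h(\mathcal Pa(\pi_n(\bar Q)))$ is tight, a compact $G\subset Y$ with $P\{hx_r\notin G\}<\varepsilon$ exists, and the three estimates combine to $P\{hx\notin G_\sigma\}<3\varepsilon$ for all $x\in\mathcal K$; Remark~\ref{rem_def_tight_sets} then yields tightness of $h(\mathcal K)$. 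Thus everything reduces to showing, for a finite-dimensional projective system and a compact $K\subset X$, that $h(\mathcal Pa(K))$ is tight, where $\mathcal Pa(K)=\mathcal Pa(\mathcal X,\mathcal B)\cap\mathcal P(K,\mathcal S)$.

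For this finite-dimensional case I would first invoke Lemma~\ref{lem-isomorphism} to assume $\mathcal X$ is Euclidean, $\mathcal E=(E_i,p^{ji},T_m)$ with $E_i=\{(x_1,\dots,x_i,0,\dots,0)\}$, so that adaptedness of $z=(\zeta_1,\dots,\zeta_m)$ means simply that $\zeta_i$ is $\mathcal F_i$-measurable for each $i$; I would also use the norm $\|x\|_E=\sum_i|x_i|$. Given $\varepsilon,\sigma>0$ I would apply Theorem~\ref{th-uniform-cont}(2) on the compact $\widehat K$, the closed $1$-neighbourhood of $K$, to get $\rho\in(0,1)$ such that $\|z_1-z_2\|_E\le\rho$ a.s. implies $d_Y(hz_1,hz_2)<\varepsilon\min\{\sigma,1\}$, hence $P\{\|hz_1-hz_2\|_Y\ge\sigma\}<\varepsilon$, for $z_1,z_2\in\mathcal Pa(\widehat K)$. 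For $z\in\mathcal Pa(K)$ I would let $g_i:R\to R$ send a real number to the nearest multiple of $\rho/m$ and set $w=(g_1(\zeta_1),\dots,g_m(\zeta_m))$; then $\|w-z\|_E\le\rho/2$ a.s., $w$ takes finitely many values $c_1,\dots,c_J$ inside $\widehat K$, and on each $A_l=\{w=c_l\}$ one has $w\equiv c_l$. Since $h$ is defined on all adapted points, each constant $c_l$ (being deterministic, hence adapted) has an image $h(c_l)\in\mathcal P(Y)$, and locality gives $hw=\sum_{l=1}^J h(c_l)I_{A_l}$ $P$-a.s.; as a finite family of random points is tight, there is a compact $G$ with $P\{h(c_l)\notin G\}<\varepsilon/J$ for every $l$, whence $P\{hw\notin G\}<\varepsilon$. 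Combining with $P\{\|hz-hw\|_Y\ge\sigma\}<\varepsilon$ gives $P\{hz\notin G_\sigma\}<2\varepsilon$ uniformly in $z$, and Remark~\ref{rem_def_tight_sets} finishes the finite-dimensional case.

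The hard part is the passage from $z$ to the adapted simple point $w$: snapping $z$ onto a metric $\rho$-net of $K$ would in general destroy adaptedness, because the resulting level sets need not be compatible with the filtration. The coordinate-wise snapping is what saves the argument, since in Euclidean adapted coordinates adaptedness is exactly the coordinatewise condition ``$\zeta_i$ is $\mathcal F_i$-measurable'', which survives composition with the Borel maps $g_i$; this, together with $h$ being defined on all adapted points so that $h(c_l)$ makes sense for grid constants $c_l$ that may fall outside $K$, is precisely what legitimises the locality identity $hw=\sum_l h(c_l)I_{A_l}$ and collapses the image to a finite --- hence tight --- family.
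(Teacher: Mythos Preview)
Your proof is correct and follows essentially the same two-step structure as the paper: reduce to a finite-dimensional projective system via the Volterra approximations $\pi_n$ and Lemma~\ref{lem-projection}, then in the Euclidean model snap adapted points to a finite coordinate grid and use locality to write $hw=\sum_l h(c_l)I_{A_l}$, so that the image is controlled by a finite, hence tight, family. The one notable difference is that you exploit uniform convergence of $\pi_n$ on tight sets to obtain a \emph{single} index $n$ valid for all $x\in\mathcal K$, whereas the paper's Step~2 picks $m$ depending on $x$; your choice makes the uniformity of the resulting compact $G$ transparent and is arguably the cleaner way to organise the reduction.
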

\begin{proof}

Step 1. Assume first that $X$ is finite dimensional and prove that $h(\mathcal K_0)$ is tight for any $\mathcal K_0={\mathcal P}a(\mathcal X, \mathcal B)\cap \mathcal P(Q, \mathcal S)$, where $Q\subset X$ is compact. Observe that by Lemma  \ref{lem-isomorphism}, any linear bijection $G: \ X\to E$ induces the linear isomorphism $h_G$ between the spaces
${\mathcal P}a(\mathcal X, \mathcal B)$ and ${\mathcal P}a(\mathcal E, \mathcal B_m)$, where
$\mathcal E=(E_i, p^{ji}, T_m)$, $E_i=\{(x_1,...,x_i, 0, ...,0)\}$, $E=E_m$, $\mathcal B_m$ is a finite stochastic basis with a filtration $(\mathcal F_i)_{i\in T_m}$ and $p^{ji}$ are the orthogonal projections, which remove the coordinates $(x_{j+1},..., x_{i})$. Observe that $x=(x_1,...x_m)\in \mathcal Pa(\mathcal E, \mathcal B_m)$ if and only if $x_i$ is a $\mathcal F_i)$-measurable random variable.

From now on we replace $\mathcal X$ with $\mathcal E$, so that $Q\subset E$. We write $\mathcal Pa(H)$ for $\mathcal Pa(\mathcal E, \mathcal B')\cap \mathcal P(H, \mathcal S)$ if $H\subset E$.

Choose a sufficiently large $m$-dimensional cube $\Pi_m=\{(x_1,....x_m)\in E=-r\le x_i \le r, \ i=1,...,m\}$ containing $Q$ and put $\Pi_i=p^{i}(\Pi_m)$. Each $\Pi_i$ is an $i$-dimensional cube.
For arbitrary $\sigma>0$ and $\varepsilon>0$ find $\rho>0$ such that
\begin{equation}\label{eq-C-1}
\Vert x - x' \Vert_E < \rho \ \mbox{a.s.} \ \ \ \Rightarrow \ \ \ P\{\Vert hx-hx'\Vert_Y\ge \sigma\}<\varepsilon
\end{equation}
for all $x, x' \in \mathcal Pa(\Pi_m).$ This follows from uniform continuity of $h$ on $\mathcal Pa(\Pi_m)$.
We want to construct a finite subset $F\subset \Pi_m$ satisfying
the following condition: for any $x\in{\mathcal P}a(\Pi_m)$ there exists $x'\in{\mathcal P}a(F)$ for
which $\|x-x'\|_E<\rho$. For this purpose, we divide the interval $[-r,r]$ into disjoint intervals $[-r,-r+\xi]$, $(-r+\xi, -r+2\xi]$,..., $(r-\xi, r]$, where $\xi<\frac{\rho}{\sqrt{m}}$. This
induces the partition of the cubes $\Pi_i$ into disjoint cubic cells $\Pi(J_i)$ ($J_i$ is an associated $i$-dimensional multi-index) of equal size and the diameter less than $\rho$.  Let $c(\Pi(J_i))$ be the center of the cubic cell $\Pi(J_i)$, let the finite set $%
F_i$ consist of all these centers and put $F=F_m$.
By construction, the projection $p^{ji}$ maps each cell $\Pi(J_i)$ onto some cell $\Pi(J_j)$, and in this case $p^{ji}(c(\Pi(J_i)))=c(\Pi(J_j))$.

Given $x\in {\mathcal P}a(\Pi_m)$ define
\[
A(J_i)\equiv \{\omega \in \Omega \;:\;p^ix(\omega )\in \Pi(J_i)\}\in\mathcal F_i
\ \ \
\mbox{and} \ \ \
x'_i(\omega)=c(\Pi(J_i))\;\;\;\;\hbox{if}\;\;\;\;\omega \in A(J_i)
\]
and put  $x'=x'_m$.
Evidently, $\Vert x-x'\Vert <\rho $ a.s. and $p^{im}x=x'_i$ for any $i\in T_m$, which implies that $x'\in {\mathcal P}a(F)$.

Let $F=\{f_1, ... f_s\}$ and $h(f_k)=y_k$ ($1\le k \le s$). The set $h(F)=\{y_k: 1\le k\le s\}\subset \mathcal P(Y, \mathcal S)$ contains finitly many random points, so that there exists a compact $C\subset Y$ and a set $B\subset\Omega$ such that $PB\ge 1-\varepsilon$ and
 $y_k(\omega)\in C$ for all $1\le k \le s$ and almost all $\omega\in B$.
On the other hand, arbitrary $u\in \mathcal Pa(F)$ can be represented as
$u=\sum_{k=1}^sf_k I_{B(k)}$ for some measurable subsets $B(k)$. By locality of $h$, we then obtain $hu=\sum_{k=1}^s h(f_k)I_{B(k)}=\sum_{k=1}^s y_k I_{B(k)}$. Hence $(hu)(\omega)\in C$ for almost all $\omega\in B$, so that $P\{hu\notin C\}<\varepsilon$ for all $u\in \mathcal Pa(F)$. Now, for an arbitrary
 $x\in {\mathcal P}a(\Pi_m)$ we put $u=x'$ and minding (\ref{eq-C-1}) yields $P\{hx\notin C_\sigma\}<2\varepsilon$, where $C_\sigma$ is the $\sigma$-neighborhood of $C$. By  Remark \ref{rem_def_tight_sets} it means that $h(\mathcal K_0)\subset h({\mathcal P}a(\Pi_m))$ is tight.

Step 2. Consider now the case of a general $X$. Let $\mathcal K_0$ be an arbitrary tight subset of $\mathcal Pa(X)$, $\sigma$ and $\varepsilon$ two positive numbers and $Q_0$ be a compact subset of $X$ satisfying the property $%
P\{x\notin Q_0\}<\varepsilon $ $(\forall x\in {\mathcal K}_0)$.

Put $\mathcal K =\bigcup_
{n\ge 1}\pi_n(\mathcal K_0)$. This set is tight, as each $y\in\mathcal K$ satisfies $P\{y\notin Q\}<\varepsilon $, where
 $Q$ is the closed convex hull of the precompact set $\bigcup_
{n\ge 1}\pi_n(Q_0)$. By uniform continuity of $h$ on tight subsets, we can find $\rho>0$ and $\delta>0$ such that
$$
P\{\Vert x-y\Vert_X\ge \rho\}<\delta \ \ \mbox{implies} \ \
 P\{\Vert hx-hy\Vert_Y\ge \sigma\}<\varepsilon \ \ \ \forall x\in\mathcal K.
$$
As $\pi_nx\to x$ in probability, we can find an $m\in N$ with the property $P\{\Vert x-\pi_mx\Vert_X\ge \rho\}<\delta$. According to Step 1, the set $h(\mathcal Pa(\pi_m(Q_0))$ is tight in $\mathcal P(Y, \mathcal S)$, so that there exist a compact $C\subset Y$ such that $P\{hz\notin C\}<\varepsilon$ for all $z\in \mathcal Pa(\pi_m(Q_0))$. For any $x\in \mathcal K_0$ we put $y=\pi_nx\in\mathcal K$. By Lemma \ref{lem-projection}, there exists
$z\in \mathcal Pa(\pi_m(Q_0))$ such that $y(\omega)=z(\omega)$ as long as $y(\omega)\in \pi_m(Q_0)$, so that $P\{y\ne z\}<\varepsilon$. Thus, for the $\sigma$-neighborhood $C_\sigma$ of $C$ we obtain
$$
\begin{array}{l}
  P\{x\notin C_\sigma\}\le P\{hz\notin C\}+P\{\Vert hx-hz\Vert_Y\ge \sigma\} \\
  \le\varepsilon+ P\{\Vert hx-hy\Vert_Y\ge \sigma\}+P\{hy\ne hz\}<2\varepsilon + P\{y\ne z\}<3\varepsilon,
\end{array}
$$
because $\{hy\ne hz\}\subset \{y\ne z\}$ due to locality of $h$.
By Remark \ref{rem_def_tight_sets}, the set $h(\mathcal K)$ is tight.
\end{proof}

\begin{proposition}
\label{prop-tightness-superposition} Let  $\mathcal X$, $\mathcal Y$ and $\mathcal U$ be projective system of separable Banach spaces. Let the operators $h_1:{\mathcal P}a(\mathcal X, \mathcal B)\to {\mathcal P}a(\mathcal U, \mathcal B)$ and
$h_2:{\mathcal P}a(\mathcal U, \mathcal B)\to {\mathcal P}(Y, \mathcal S)$ be local and uniformly continuous
on tight subsets of the corresponding domains. Then the operator $%
h=h_1\circ h_2$ will be local and tight if either

1) $h_2$ is bounded (i.e. it maps bounded sets into
bounded ones) and $h_1$ is tight, or

2) $h_2$ is tight and the projective system  $\mathcal U$ satisfies Property $(\Pi)$.
\end{proposition}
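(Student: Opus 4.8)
The plan is to verify, for the composite $h$ sending $x\mapsto h_2(h_1(x))$, the three defining properties of a tight operator in Definition \ref{def-tight-oper}, working throughout in the domain $\mathcal{P}a(\mathcal X,\mathcal B)$, whose projective system satisfies Property $(\Pi)$ by the standing assumption of this appendix. Locality is immediate: if $x|_A=y|_A$ then $h_1x|_A=h_1y|_A$ by locality of $h_1$, and a second application of locality of $h_2$ gives $hx|_A=hy|_A$. For uniform continuity on tight subsets I would first apply Proposition \ref{prop-transform-tight-sets} to $h_1$ (legitimate since $\mathcal X$ has $(\Pi)$) to conclude that $h_1$ carries any tight $\mathcal K\subset\mathcal{P}a(\mathcal X,\mathcal B)$ into a tight subset $h_1(\mathcal K)\subset\mathcal{P}a(\mathcal U,\mathcal B)$; since $h_2$ is uniformly continuous on that tight set, chaining the two moduli in the combined form of uniform continuity from Subsection \ref{sec-uniform-continuity} shows $h$ is uniformly continuous on $\mathcal K$. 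This part is uniform across both cases.

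The substance is the remaining requirement that $h$ send bounded sets to tight sets, and here the two hypotheses split. In \textbf{Case 1} I would use that $h_1$ is tight, so for bounded $\mathcal M$ the set $\mathcal K:=h_1(\mathcal M)$ is already tight in $\mathcal{P}a(\mathcal U,\mathcal B)$, and it remains to show $h_2(\mathcal K)$ is tight. The mechanism is that boundedness of $h_2$ confines $h_2(\mathcal K)$ to a fixed ball of $Y$ off an arbitrarily small set, while locality together with uniform continuity of $h_2$ on the tight set $\mathcal K$ prevents the images from dispersing within that ball: fixing a compact $Q\subset U$ with $P\{y\notin Q\}<\delta$ on $\mathcal K$, locality forces $h_2$-images of points agreeing on $\{y\in Q\}$ to coincide there, uniform continuity controls the discrepancy off that set, and Remark \ref{rem_def_tight_sets} then yields tightness of $h_2(\mathcal K)$.

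In \textbf{Case 2} the extra assumption that $\mathcal U$ satisfies $(\Pi)$ lets me apply Proposition \ref{prop-transform-tight-sets} to $h_2$ as well, so $h_2$ maps tight sets to tight sets, while tightness of $h_2$ supplies the bounded-to-tight compactification on the $\mathcal U$-side. I would reduce to balls via Proposition \ref{prop-tight-operators} applied to $h$ itself: it suffices to show $h\bigl(\mathcal{P}a(\mathcal X,\mathcal B)\cap\mathcal P(B_r,\mathcal S)\bigr)$ is tight for each $r$. Truncating in $\mathcal X$ by the finite-dimensional Volterra maps $\pi_n$ and projecting onto the compact $\pi_n(B_r)$ via Lemma \ref{lem-projection}, one approximates each $x$, off a small set, by an adapted compact-valued $x_r$; passing $x_r$ through $h_1$ (tight $\to$ tight by Proposition \ref{prop-transform-tight-sets}) lands in a tight subset of $\mathcal{P}a(\mathcal U,\mathcal B)$, on which $h_2$ is tight and hence produces a tight image, and locality transfers the conclusion back to $h(x)$.

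The main obstacle, in both cases, is precisely this bounded-to-tight step: a tight image cannot be manufactured from uniform continuity alone, since tight sets need not be relatively compact in probability, so one is forced to discretise or truncate while \emph{preserving adaptedness}. This is exactly what dictates the asymmetry in the hypotheses. In Case 2 the truncation is run on the $\mathcal U$-side through Lemma \ref{lem-projection} and Proposition \ref{prop-transform-tight-sets}, which is why Property $(\Pi)$ on $\mathcal U$ is assumed; in its absence (Case 1) one cannot truncate $h_1(\mathcal M)$ inside $\mathcal{P}a(\mathcal U,\mathcal B)$, and the delicate point becomes showing that boundedness of $h_2$, combined with locality, is enough to keep the images confined and non-dispersing—this is the step I expect to require the most care to make rigorous.
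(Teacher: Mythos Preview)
There is a labeling slip in the statement that has led you astray. As written, the composite $h=h_1\circ h_2$ does not type-check: $h_2$ has domain $\mathcal Pa(\mathcal U,\mathcal B)$, not $\mathcal Pa(\mathcal X,\mathcal B)$. The Remark immediately following the proposition makes the intended convention clear---there $h_1\circ h_2$ is compact when ``$h_2$ is bounded and $h_1$ is compact'', which is the classical result with $h_2$ applied first and $h_1$ second. So the intended picture is $h_2:\mathcal Pa(\mathcal X,\mathcal B)\to\mathcal Pa(\mathcal U,\mathcal B)$ as the \emph{inner} operator and $h_1:\mathcal Pa(\mathcal U,\mathcal B)\to\mathcal P(Y,\mathcal S)$ as the \emph{outer} one, with $h=h_1\circ h_2$ in standard notation. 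Under that reading the paper's two-line proof is exactly right: in Case~1 a bounded set goes to a bounded set under $h_2$ and then to a tight set under the tight operator $h_1$ (``directly from the definitions''); in Case~2 a bounded set goes to a tight set under $h_2$, and then Proposition~\ref{prop-transform-tight-sets}, applicable because $\mathcal U$ has Property~$(\Pi)$, says the outer operator $h_1$ maps tight to tight. Uniform continuity on tight subsets is handled in both cases by applying Proposition~\ref{prop-transform-tight-sets} to the inner operator (legitimate since $\mathcal X$ satisfies~$(\Pi)$ by the standing assumption of this appendix) and then chaining moduli---this is the part of your write-up that is correct and matches the paper.

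Your reading---keeping the stated domains and interpreting $h_1\circ h_2$ diagrammatically as $x\mapsto h_2(h_1(x))$---forces you to prove a different and harder assertion, and both of your bounded-to-tight arguments have genuine gaps. In your Case~1 you need the outer operator $h_2$ (bounded, local, uniformly continuous on tight subsets, domain $\mathcal U$ \emph{without} Property~$(\Pi)$) to send tight sets to tight sets; this is precisely the content of Proposition~\ref{prop-transform-tight-sets}, whose proof genuinely uses $(\Pi)$ to manufacture finite-dimensional, adaptedness-preserving truncations---your locality/boundedness sketch does not replace that mechanism. In your Case~2 the inner operator $h_1$ is neither tight nor bounded, so $h_1(\mathcal M)$ need not even be bounded; your truncation produces approximants $x_r\in\mathcal Pa(\pi_n(B_r))$ with $n$ depending on $x$, so collectively they only lie in the bounded (not tight) set $\mathcal Pa\bigl(\bigcup_n\pi_n(B_r)\bigr)$, and ``$h_1$ of a tight set is tight'' cannot be invoked uniformly over $\mathcal M$. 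Once you swap the roles of $h_1$ and $h_2$ in the domain specification, all of this machinery becomes unnecessary.
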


\begin{proof}
Evidently, the superposition of local operators is local. Now,
the first statement follows directly from the definitions, while the second statement follows from Proposition \ref{prop-transform-tight-sets}.
\end{proof}

\begin{remark}
Properties of the operators in Propositions \ref{prop-sequence-tight-op}-\ref{prop-tightness-superposition} mimic to some extent the corresponding properties of deterministic operators: 1) the limit of a sequence of compact operators is compact if the convergence is uniform on bounded subsets; 2) continuous operators map compact sets into compact sets; 3) the superposition $h_1\circ h_2$ of two continuous operators is compact if either $h_2$ is bounded and $h_1$ is compact or $h_2$ is compact.

The property of locality is essential for the results in this section: none of them is, in general, true if at least one
of the involved operators is not local.
\end{remark}

\section{Examples}\label{sec_examples}

\subsection{Examples of projective systems}\label{sec_ex_proj_systems}

\begin{example}\label{ex-proj-families}
Euclidean projective systems $\mathcal E = (E_i, p^{ji}, T_m) $, see Definition \ref{def-Euclidean}.
\end{example}

\begin{example}\label{ex-Pi-for-C}

Let  $t\in T\equiv [a, b]$, $X_t=C[a,t]$, $p^{ut}:\ C[a,t]\to C[a,u]$ be the restriction maps. We prove that the projective system
  $\mathcal X=(C[a,t], p^{ut}, T)$ satisfies {Property ($\Pi$)}.

  Due to the linear rescaling of the variable $t$, it suffices to consider $T=[0,1]$. For $n\in N$ we put $\delta_n=\frac{1}{n}$ and define
  \begin{equation}\label{eq-projections_C}
    (\pi_nx)(t)=\sum\limits_{k=0}^{n-1}\left[(x({k}{\delta_n})-x({(k-1)}{\delta_n}))(nx-k)+x({(k-1)}{\delta_n})\right]I_{[{k}{\delta_n}, {(k-1)}{\delta_n})},
  \end{equation}
for any $x\in C[0,1]$ ($x(-\delta_n)=0$). As
$$
\begin{array}{l}
(\pi_nx)({k}{\delta_n}\!+\!0)=\left(x({k}{\delta_n})-x({(k-1)}{\delta_n})\right)(n\cdot {k}{\delta_n}-k)+x({(k-1)}{\delta_n})=x({(k-1)}{\delta_n}) \ \ \ \mbox{and}
 \\
(\pi_nx)({k}{\delta_n}\!-\!0)=\left(x({(k-1)}{\delta_n})-x({(k-2)}{\delta_n})\right)(n\cdot {(k-1)}{\delta_n}-k)+x({(k-2)}{\delta_n})=x({(k-1)}{\delta_n}),
 \end{array}
$$
the piecewise linear function $\pi_nx$ is continuous for all $t\in T$. On the other hand, if $x(s)=y(s)$ ($0\le s \le t$) and $t\in [{k}{\delta_n}, {(k-1)}{\delta_n})$, then $(\pi_n x)(s)=(\pi_n y)(s)$ ($0\le s \le t$) due to (\ref{eq-projections_C}). Evidently, this implies the Volterra property of $\pi_n$ in the sense of Definition \ref{def-Volterra}. Finally, uniform continuity of $x\in  C[0,1]$ implies $\|\pi_n x-x\|_{C[0,1]}\to 0$ ($n\to\infty$), so that the sequence $\{\pi_n\}$ strongly converges to the identity operator in the space $C[0,1]$. {Property ($\Pi$)} is verified.
\end{example}

\begin{example}\label{ex-Pi-for-Lp}

Let $1\le r <\infty$, $t\in T\equiv [a, b]$, $X_t=L^r[a,t]$, $p^{ut}:\ L^r[a,t]\to L^r[a,u]$ be the restriction maps. The projective system
  $\mathcal X=(L^r[a,t], p^{ut}, T)$ satisfies {Property ($\Pi$)} as well. To check this, we observe that the sequence of operators
  $$
  (\tau_nx)(t)=\int\limits_a^t\Delta_n(t-s)x(s)ds \ \ \ (n\in N),
  $$
  where $\Delta_n(u)\ge 0$ ($u\in R$) is a continuous function satisfying the properties $\Delta_n(u)=0$ outside $[a, a+\frac{b-a}{n}]$ and $\int\limits_R \Delta(u)du=1$ strongly converges to the identity operator in the space $L^r[a,b]$ (due to the standard argument). On the other hand, $\tau_n(L^r[a,b])\subset C[a,b]$ and since the topology on $C[a,b]$ is stronger, than the topology on $L^r[a,b]$, the sequence of finite dimensional Volterra maps $\tau_n\circ\pi_n$ ($\pi_n: C[a,b]\to C[a,b]$ were defined in the previous example) strongly converges to the identity operator in the space $L^r[a,b]$. By construction, this sequence satisfies all requirements needed for {Property ($\Pi$)}.
\end{example}
\begin{remark}
From Examples \ref{ex-Pi-for-C}, \ref{ex-Pi-for-Lp} and Corollary   \ref{cor-fixed-point-general} we deduce Theorem  \ref{th-fixed-point}, the "light version" of the fixed-point theorem for LC operators, as Young expansions preserve the martingale property, which is proven below in Lemma \ref{lem-martingale-property}.

\end{remark}

\subsection{Examples of adapted random points}\label{sec-ex-adapted-points}

\begin{example}\label{ex-adapted points}
Let the projective system $\mathcal X$ be defined as in Examples \ref{ex-Pi-for-C} or \ref{ex-Pi-for-Lp}, i.e. $X$ is either $C[a,b]$ or $L^r[a,b]$ ($1\le r <\infty$). Let $\mathcal B$ be a right-continuous stochastic basis, i.e. $\mathcal F_t=\bigcap\limits_{s>t}\mathcal F_s$. Then using the standard approximation procedure  (see e.g. \cite{Oks}) it is straightforward to see that $\mathcal Pa(\mathcal X,\mathcal B)$ coincides with the space of all (equivalence classes of indistinguishable) stochastic processes that are $\mathcal F_t$-adapted, $\mathcal F\otimes \mbox{Bor}(R)$-measurable  and whose trajectories a.s. belong to the space $C[a,b]$ and $L^r[a,b]$, respectively.
\end{example}

\subsection{Examples of local operators}\label{sex_ex_local_op}

\begin{example}
  Any finite linear combination of local (resp. local and continuous) operators is again local (resp. local and continuous).
\end{example}

\begin{example}\label{ex_Carath}
 The superposition operator $$h_f: \ {\mathcal P}(X, \mathcal S)\to {\mathcal P}(Y, \mathcal S), \ \ \mbox{defined by} \ \ (h_fx)(\omega)=f(\omega, x(\omega)),$$ where $f: \Omega\times X\to Y$ is a given random function, is local, as   $x(\omega)=y(\omega)$ a.s. on $A\subset \Omega$ implies $$(h_fx)(\omega)=f(\omega, x(\omega))=f(\omega, y(\omega))=(h_fy)(\omega) \ \mbox{a.s. on } \ A.$$
 If, in addition, $f: \Omega\times X\to Y$ is a Carath\'{e}odory map, i.e. $f(\cdot, x)\in \mathcal{P} (Y, \mathcal S)$ for all $x\in X$ and
$f(\omega, \cdot)$ is continuous for almost all $\omega\in\Omega$, then the superposition operator $h_f: \mathcal{P} (X, \mathcal S)\to \mathcal{P} (Y, \mathcal S)$ is continuous in probability. The converse is true as well: If a local operator $h: \ {\mathcal P}(X, \mathcal S)\to {\mathcal P}(Y, \mathcal S)$ is continuous in probability, then $h=h_f$ for some Carath\'{e}odory map $f: \Omega\times X\to Y$, see \cite{Pon-0}. This result is also valid for random subsets of $X$, see Theorem \ref{th-Nemytskii}.
\end{example}

\begin{example}\label{ex_stoch_int_local}
Let the projective system $\mathcal X$ be as in Examples \ref{ex-Pi-for-C} or \ref{ex-Pi-for-Lp}. The It\^{o} integral
$$(Ju)(s)=\int_{a}^{t}u(s)dW(s)$$ is a LC operator acting from the space $\mathcal Pa(\mathcal X,\mathcal B)$ to the space $\mathcal Pa(\mathcal Y,\mathcal B)$ consisting of adapted stochastic processes with the trajectories belonging to $X=C[a,b]$ or $L^r[a,b]$ ($2\le r <\infty$) and $Y=C[a,b]$ or $L^q[a,b]$ ($1\le q <\infty$), respectively. In this example, the domain of the local operator is a proper subset of ${\mathcal P}(X, \mathcal S)$, and the representation by a Carath\'{e}odory function is no longer true. Otherwise, the It\^{o} integral would have been a Lebesgue-Stilties integral by the Riesz representation theorem.
\end{example}

\begin{example}\label{ex-local-comp}
This example generalises Example \ref{ex_stoch_int_local}.

The composition $(hx)(t)=\int_{a}^{t}F(s,x(s))dW(s)$
of
the It\^{o} integral with a superposition operator is an LC operator acting from $\mathcal Pa(X)$ to $\mathcal Pa(Y)$, where $X=C[a,b]$ and $Y=C[a,b]$ or $Y=L^q[a,b]$ ($1\le q<\infty$), provided that  the following conditions are satisfied:
\begin{itemize}
  \item $F(\cdot, \cdot, x)$ is $\mathcal F\otimes \mbox{Bor}([a,b])$-measurable for all $x\in R^n$
    \item $F(\cdot, t, x)$ is $\mathcal F_t$-adapted for any $t\in [a,b]$ and $x\in R^n$;
  \item $F(\omega, t, \cdot)$ is continuous for $P\otimes\mu$-almost all $(\omega, t)\in \Omega\times [a,b]$, where $\mu$ is the Lebesgue measure on $[a,b]$.
  \item $\int_{a}^{b}(\sup\limits_{|x|\le r}|F(\omega, t, x)|)^2dt<\infty$ a.s.
\end{itemize}
Indeed, in this case the random map $f: \ \Omega\times X\to Y$, given by $(f(\omega, x(\cdot))(t)=F(\omega, t, x(t))$, $t\in [a,b]$, is Carath\'{e}odory and due to the last condition maps $\mathcal P(X)$ to $\mathcal P(Y),$ see e.g. \cite{Appell}. Moreover, it maps $\mathcal Pa(X)$ to $\mathcal Pa(Y)$ due to the second and third assumption, so that the claim follows from Examples  \ref{ex_Carath} and \ref{ex_stoch_int_local}.

If the last of the above conditions on $F$ is replaced by the condition
\begin{itemize}
  \item $|F(\omega, t, x)|\le A(\omega, t)+C|x|^{p/q},$
\end{itemize}
where $p\ge 2,$ $1\le q \le\infty$, $A$ is a measurable stochastic process with $L^q$-trajectories and $C\ge 0$ is a constant, then $h$ acts from $\mathcal Pa(X)$ to $\mathcal Pa(Y)$, where $X=L^r[a,b]$ and  $Y=L^q[a,b]$ if $1\le q <\infty$ and $Y=C[a,b]$ if $q =\infty$. This follows from the continuity properties of the superposition operator in $L^r$-spaces.

Finally, the function $F$ can be replaced by a random continuous Volterra operator $V_\omega : X\to Y$ such that
$V^t_\omega(x)$ is $\mathcal F_t$-measurable for any $x\in X$, where $V^t_\omega$ is the restriction of $V_\omega$ on the subspace $C[a,t]$, as in this case $h_V$ acts from $\mathcal Pa(X)$ to $\mathcal Pa(Y)$. Here $X$ and $Y$ are again one of the above functional spaces.
\end{example}

\begin{example}
  More general stochastic integrals are also LC operators as long as they can be defined as limits in probability of finite dimensional approximations. However, in this case the domain and the range may be more complicated, see \cite{Pon-3}.
\end{example}

\begin{example}\label{ex-evolution}
  More nontrivial examples of local operators are given by the evolution operators $U^t_a$ constructed for finite or infinite dimensional stochastic differential equations with the existence and uniqueness property on some interval $[a,b]$, see e.g.
   \cite[Prop. 5.1, 5.5]{Pon-4}.

Indeed, suppose that
$x_0|_A=y_0|_A$ a.s. for some $A$. As $x_0$ and $y_0$ are $\mathcal F_a$-measurable, we may assume that $ A\in \mathcal F_{a}$. Put $x(t)=U^t_ax_0$, $y(t)=U^t_ay_0$,
$z(t)=x(t)I_A+y(t)I_{\Omega - A}$ ($t\ge a$) and observe that due to the locality of stochastic integrals (see Example \ref{ex_stoch_int_local}), $z(t)$ is a
unique solution of the underlying equation, which satisfies
$z(a)=x_0I_A+y_0I_{\Omega - A}=y_0$. By the uniqueness property, $z(t)=y(t)$ a.s. for all $a\le t \le b$.
In particular,
$x(t)|_A=y(t)|_A$ a.s. This yields locality of the evolution
operator $U_{a}^{t}$ for $a\le t\le b$.

This example shows that the evolution operators are always LC operators, and this property is a simple consequence of the well-posedness of the initial value problem for the underlying stochastic equation. In this respect, it is important to remark that evolution operators are not always generated by Carath\'{e}odory functions. For instance, the so-called "singular" delay differential equations do not produce Carath\'{e}odory evolution operators  \cite{Moh}. Another example is described in \cite[Ex. 6.2]{Pon-4}.
\end{example}

\begin{example}\label{ex-local-diff}
Differentiation is also an example of a local operator which cannot be represented by a  Carath\'{e}odory function.
\end{example}

%

%

\subsection{Examples of tight operators}\label{sec_ex_tight}

\begin{example}\label{ex-linear-comb-tight}
Some general properties:
\begin{itemize}
  \item Any finite linear combination of local and tight (resp. tight-range) operators is again local and tight (resp. tight-range).
  \item For operator superpositions check Proposition \ref{prop-tightness-superposition}.
  \item For uniform limits of sequences of tight operators check Proposition \ref{prop-sequence-tight-op}.
\end{itemize}

\end{example}

\begin{example}\label{ex-uniform-caratheodory}
  For any separable Banach spaces $X$ and $Y$ and any Carath\'{e}odory map $f: \Omega\times X\to Y$, the superposition operator $h_f: \ \mathcal{P} (X, \mathcal S)\to \mathcal{P} (Y, \mathcal S)$ is local and uniformly continuous on tight subsets of $\mathcal{P} (X)$.

We only have to prove the property of uniform continuity.
Let $\mathcal{P} (X)\equiv \mathcal{P} (X, \mathcal S)$ and $\mathcal{P} (Y)\equiv \mathcal{P} (Y, \mathcal S)$ and $\mathcal K$ be an arbitrary tight subset of $\mathcal P(X)$ and $\sigma>0$, $\varepsilon >0$. Choose a
compact and convex set $K\subset X$ such that
$$
P\{x\notin X\}<\varepsilon \ \ \ \mbox{for any} \ \ \ x\in\mathcal K
$$
and put
\[
\theta _\delta (\omega )\equiv \sup \left\{ \Vert f(\omega ,x)-f(\omega
,y)\Vert_Y ,\hskip 0.5truecmx,y\in K,\;\Vert x-y\Vert_X \le \delta \right\} .
\]
Since $K$ is compact, $\delta \,\to \,0$ implies that $\theta _\delta $
goes to zero a.s. and hence in probability. Thus, for any positive $\sigma
,\varepsilon $ there is $\delta >0$ such that ${ P}\{\theta _\delta
(\omega )\ge \sigma \}<\varepsilon $. Pick two arbitrary random points $x_1$
and $x_2$ from $\mathcal K$ satisfying ${P}\{\Vert x_1-x_2\Vert_X
\ge \delta \}<\varepsilon $ and put $\hat x_i=\pi (x_i)$ ($i=1,2$) where $\pi: \ X\to K$ is a continuous projection ($K$ is convex, closed and bounded). Let $$\hat\Omega=\{\omega\in\Omega: \ x_1(\omega)\in K \ \& \ x_2(\omega)\in K \}.$$
Then $x_i(\omega)= \hat x_i(\omega)$ ($i=1,2$),  and therefore $(h_fx_i)(\omega)= (h_f\hat x_i)(\omega)$ ($i=1,2$), if
$\omega \in \hat\Omega$. On the other hand, $P\hat\Omega\ge 1-2\varepsilon$ as $P\{x_i\in K\}<\varepsilon$ ($i=1,2$). Therefore,
\[
\begin{array}{c}
  P\{\Vert h_fx_1-h_fx_2\Vert_Y \ge \sigma \}\le {P}\{\Vert \hat x_1-\hat x_2\Vert_X
\ge \delta \}+{P}\{\theta _\delta (\omega )
\ge \sigma \}\\ +P\{h_fx_1\ne h_f\hat x_1\} + P\{h_fx_2\ne h_f\hat x_2\}<4\varepsilon,
\end{array}
\]
which yields the uniform continuity of $h$ on $\mathcal K$.
\end{example}

\begin{example}\label{ex-Carat-tight}
 If a Carath\'{e}odory map $f: \Omega\times V\to Y$ is an almost surely compact (resp. compact-range) operator from $V\subset X$ to $Y$, then the superposition operator $h_f: \ \mathcal{P} (V)\to \mathcal{P} (Y)$ is tight (resp. tight-range).

Consider an arbitrary tight subset $\mathcal K\in\mathcal P(V)$ and arbitrary positive numbers $\varepsilon>0, $ $\sigma >0$.  Pick $r>0$ for which
$P\{x\notin B_r\cap V \}<\varepsilon$ for all $x\in \mathcal K$, where $B_r=\{z\in X\,:\,\Vert z\Vert_X \le r\}$. Let also fix a countable set $%
\{z_i, \ i\in N\}$ which is dense in $B_r\cap V$. For each
$\omega $ the set $H(\omega )\equiv \{f(\omega ,z_i), \ i\in N\}$ is
precompact. Therefore, the measurable function
\[
k_n(\omega )\equiv \sup_{v\in H_{}(\omega )}\inf_{u\in H_n(\omega )}\Vert
v-u\Vert_Y,
\]
where $H_n(\omega )\equiv \{f(\omega
,z_i),\;1\le i\le n\},$
tends to zero a.s. and hence in probability. Geometrically, it means that there exists a number $m\in N$ and a subset $\Omega^1_\varepsilon\in \mathcal F$, $P\Omega^1_\varepsilon\ge 1-\varepsilon$, such that the set $%
H(\omega )$ is contained in the $\sigma $-neighborhood of the finite set $%
H_m(\omega )$ if $\omega\in\Omega^1_\varepsilon$. Let $K$ be a compact for which $x_i(\omega)\in K$ ($i=1,...,m$) if $\omega\in\Omega^2_\varepsilon$ and $P\Omega^2_\varepsilon\ge 1-\varepsilon$. Therefore,
for each random point $z$ taking values in $\{z_i\} $, $i\in %
\mbox{N}$, one has
\begin{equation}\label{eq-1}
(h_fz)(\omega)=f(\omega, z(\omega))\in K_\sigma \ \ \ \mbox{if} \ \ \ \omega\in \Omega_\varepsilon\equiv \Omega^1_\varepsilon\cap\Omega^2_\varepsilon,
\end{equation}
where $P\Omega_\varepsilon\ge 1-2\varepsilon.$ The set of all such $z$ is dense in $\mathcal P(B_r\cap V)$, so that (\ref{eq-1})
holds true for all $z\in\mathcal P(B_r\cap V)$. Defining for any $x\in\mathcal K$ the random point $z\in\mathcal P(B_r\cap V)$ by the formula $z(\omega)=x(\omega)$ if $x(\omega)\in B_r$ and $z(\omega)=0$ otherwise, we get $P\{x\ne z\}<\varepsilon$, so that
$$
{P}\{h_fx\notin K_\sigma\}\le P\{h_fz\notin K_\sigma\}+ P\{x\ne z\}
< 3\varepsilon .
$$
By Remark \ref{rem_def_tight_sets}, the set $h_f(\mathcal K)$ is tight.
\end{example}

Deterministic integrals define compact operators in typical functional spaces. The next example shows that stochastic integrals define tight operators in typical spaces of stochastic processes. For the sake of simplicity we only consider It\^{o} integrals. However, more general stochastic integrals give rise to tight operators as well, see e.g. \cite{Pon-3}.

\begin{example}\label{ex-integral-tight}
  Let the projective system $\mathcal X$ be as in Example \ref{ex-Pi-for-C} and let $K(t,s)$ be a continuous (determinsitic) function on $[a,b]\times [a,b]$. Consider the It\^{o} integral operator
$$(Ju)(s)=\int_{a}^{t}K(t,s)u(s)dW(s)$$ as a LC operator acting from the space $\mathcal Pa(\mathcal X,\mathcal B)$ to the space $\mathcal Pa(\mathcal Y,\mathcal B)$ consisting of adapted stochastic processes with the continuous or $p$-integrable trajectories. We claim that the operator $J$ is tight if one of the following conditions is fulfilled:
\begin{enumerate}
  \item $X=C[a,b]$ or $X=L^r[a,b]$ ($2\le r <\infty$) and $Y=L^q[a,b]$ ($1\le q <\infty$);
  \item $X=C[a,b]$ or $L^r[a,b]$ ($2< r <\infty$) and $Y=C[a,b]$.
\end{enumerate}
To simplify the presentation we assume that $[a,b]=[0,1]$ and $K(t,s)\equiv 1$.
Let us first consider the case $X=L^r[a,b]$, where $r=2$. Notice that the
imbedding $L^2[a,b]$ in $L^q[a,b]$ is a continuous map if
$1\le q<2.$ It is sufficient, therefore, to consider the case
$2\le q<\infty $. Put
\[
g_t^n\equiv \sum_{k=0}^{n-1}{\frac kn}I_{[{\frac kn},{\frac{{k+1}}n})}(t)
\]
where $I_A$ is the indicator of the set $A$. Clearly, $\,g_t\le t$. The
standard estimates for stochastic integrals yield
\[
E\Bigl|\int_0^1I_{[g_t^n,t]}(s)u(s)dW(s)\Bigr|^q\le \hbox{const}\,%
E\left( \int_0^1I_{[g_t^n,t]}(s)u(s)^2ds\right) ^{\frac q2}.
\]
Therefore,
\[
E\int_0^1\Bigl|\int_0^1I_{[g_t^n,t]}(s)u(s)dW(s)\Bigr|^qdt=%
\int_0^1dt\;E\Bigl|\int_0^1I_{[g_t^n,t]}(s)u(s)dW(s)\Bigr|^q\le
\]
\[
\le \hbox{const}\,\int_0^1dt\;E\int_0^1I_{[g_t^n,t]}(s)u(s)^2ds^{%
\frac q2}=\hbox{const}\,E\int_0^1dt\left(
\int_0^1I_{[g_t^n,t]}(s)u(s)^2ds\right) ^{\frac q2}\le
\]
\[
\le \hbox{const}\,E\left( \int_0^1u(s)^2ds\left(
\int_0^1I_{[g_t^n,t]}(s)dt\right) ^{\frac 2q}\right) ^{\frac q2}\le
\]
\[
\le \hbox{const}\,E\left( \int_0^1u(s)^2ds\right) ^{\frac
q2}\times \sup_{0\le s\le 1}\int_0^1I_{[g_t^n,t]}(s)dt\le \hbox{const}\,{%
\frac 1n}E\Vert u\Vert _{L^2}^q
\]
due to the generalized H\"{o}lder inequality. Therefore,
\[
E\Bigl\Vert \int_0^{\cdot }u(s)dW(s)-\int_0^{g_n(\cdot
)}u(s)dW(s)\;\Bigr\Vert _{L^q}^q\;\le \;{\frac{\hbox{const}\,}n}E%
\Vert u\Vert _{L^2}^q,
\]
which means that a sequence of linear random finite dimensional (and therefore tight) operators
converges to $J$ uniformly on the sets $\{u\in {\mathcal Pa(X)}\,:\;\Vert u\Vert
_{L_2}\le r \ \mbox{a.s.}\}$ Applying Proposition \ref{prop-sequence-tight-op} completes the consideration of the case
$r=2\;$, $1\le q<\infty $.

Assume now that $r>2$ and $Y=C[a,b]$. Then (see e.g. \cite{Oks})
\[
E\left( \sup_{0\le s\le 1}\Bigl|\int_0^tu(s)dW(s)\Bigr|^2\right)
\le 4E\int_0^1u(s)^2ds\le \hbox{const}\,E\Vert u\Vert
_{L^{p}}^{r},
\]
and
\[
E\Bigl|\int_0^tu(s)dW(s)-\int_0^uu(s)dW(s)\Bigr|^2=\mbox{\bf
E}\Bigl|\int_u^tu(s)dW(s)\Bigr|^2=\mbox{\bf
E}\int_u^tu(s)^2ds\le
\]
\[
\le E\left( \int_u^tds\right) ^{\frac{1}{r'}}\left(
\int_u^tu(s)^{r}ds\right) ^{\frac{2}{r}}\le |t-t'|^{\frac 1{{r'}}}\Vert u\Vert
_{L^{r}}^2
\]
where $r'={\frac r{r-2}}$. By Kolmogorov's criterion $J$ maps
subsets $\{u\in \mathcal Pa(X): \ ||u||_{L^{r}}\}$ of the space into tight subsets of the space $\mathcal Pa(Y)$. By
Theorem \ref{prop-tight-operators}, the operator $J$ is tight.

All other cases follow from the two considered, as the space $C[a,b]$ is continuously imbedded in any space $L^r[a,b]$.
\end{example}

\begin{example}\label{ex-composition-tight}
The composition of the It\^{o} integral $J$ with any of the superposition operators $h_f$ and $h_V$ from Example \ref{ex-local-comp} is a tight local operator acting from $\mathcal Pa(X)$ to $\mathcal Pa(Y)$, where $X=C[a,b]$ or $X=L^r[a,b]$ $(2< r<\infty)$ and $Y=C[a,b]$ or $Y=L^q[a,b]$ ($1\le q<\infty$). This follows from the tightness properties of the operator $J$, the properties of superposition operators from Example \ref{ex-Carat-tight} and Proposition \ref{prop-tightness-superposition}.
\end{example}

\begin{example}\label{ex-evolution-1}
  The evolution operators $U(t)$ for stochastic differential equations with bounded delays are local and tight for sufficiently large  $t$, see \cite{Pon-4} for the details.
\end{example}

\subsection{Examples of Young expansions}\label{sec_ex_Young_ext}

\begin{example}\label{ex-Young-stoch-basis}
  Suppose that $\mathcal B^*=(\Omega^*,{\mathcal F}^*,{\mathcal F}_t^*,P^*)$ is an expansion of the stochastic basis $\mathcal B =(\Omega, {\mathcal F},{\mathcal F}_t, P)$ where the measure $P^*$ is generated by a random Dirac measure  $P^*=P\alpha^{-1}$ for some $\alpha\in \mathcal Pa (Z) \equiv\mathcal{P}a(\mathcal Z, \mathcal B)$, i.e. $P^*(A)=P\{\omega\in\Omega: \ \alpha(\omega)\in A(\omega)\}$. By Definition \ref{def-Young-extension}, this is a Young expansion of $\mathcal B$. We claim that
  the measure preserving map $\omega\mapsto (\omega,\alpha(\omega))$ generates a linear topological isomorphism between the spaces $\mathcal Pa (X)$ and $\mathcal Pa^* (X)$ defined by $\alpha_X: \ x\mapsto x\circ\alpha$.


  To see this, let us first check that $x\in \mathcal Pa^* (X)\equiv\mathcal{P}a(\mathcal X, \mathcal B^*)$ implies $x\circ \alpha\in \mathcal Pa(X)$. Below we will write $\Delta(\omega)$ for the set $\{z\in U: \ (\omega, u)\in \Delta\}$ where $\Delta\subset \Omega\times U$.
   Let $t\in T$ and $B\in \mbox{Bor}\,(Z_t)$, so that   $B^-\equiv (p^tx)^{-1}(B)\in F^*_t$. Then there exist $B_1, B_2\in \mathcal F_t^{*,0}$ such that $B_1\subset B^-\subset B_2$ and $P^*(B_1)=P^*(B_2)=P^*(B^-)$, which by the definition of $\mathcal F_t^{*,0}$ means that the set $\{(\omega, q^t(B_i(\omega))): \ \omega\in\Omega\}\in \mathcal F_t\otimes \mbox{Bor}\,(Z_t)$ ($i=1,2$). By the theorem of measurability of projections \cite{Wagner}, the sets $C_i\equiv \{\omega\in\Omega: q^t\alpha(\omega)\in q^t(B_i(\omega))\}$ belong to $\mathcal F_t$. In addition,
 $$
 \begin{array}{c}
   (p^t(x\circ\alpha))^{-1}(B)=\{\omega\in\Omega: \ p^t(x(\alpha(\omega))\in B\}=\{\omega\in\Omega: \ \alpha(\omega)\in B^-(\omega)\} \\
 =\{\omega\in\Omega: \ \alpha(\omega)\in B_2(\omega)\}-\Omega_0=\{\omega\in\Omega: \ q^t(\alpha(\omega))\in q^t(B_2(\omega))\}-\Omega_0,
 \end{array}
  $$
  where $\Omega_0=\{\omega\in\Omega:\ \alpha(\omega)\in B_2-B^-\}$. The latter is of measure $0$, because
  $$
  P\{\alpha\in (B_2-B^-)\}\le P\{\alpha\in (B_2-B_1)\}=P^*(B_2-B_1)=0
  $$
  and $B_2-B_1\in \mathcal F^{*,0}.$
  Therefore, $C_2-\Omega_0\in \mathcal F_t$, so that $x\circ \alpha\in \mathcal Pa(X)$. Thus, the correspondence $\alpha_X: x\mapsto x\circ\alpha$ is a linear isomorphism between $\mathcal Pa (X)$ and $\mathcal Pa^* (X)$ and since the map $\omega\mapsto (\omega,\alpha(\omega))$ is measure preserving, this correspondence is also a topological isomorphism.
\end{example}

\begin{example}\label{ex-iterated-stoch-bases}
  A  Young expansion $\mathcal B^{**} =(\Omega^{**} ,{\mathcal F^{**}},({\mathcal F_t^{**}})_{t\in {T}}, P^{**})$ of any  Young expansion $\mathcal B^* =(\Omega^* ,{\mathcal F^*},({\mathcal F_t^*}t)_{t\in {T}}, P^*)$ of a given stochastic basis $\mathcal B =(\Omega ,{\mathcal F},({\mathcal F}_t)_{t\in {T}}, P)$ is again a Young expansion of this basis.

  To see this, let us assume that $\Omega^*=\Omega\times Z_1$, $\Omega^{**}=\Omega^*\times Z_2$, where $Z_1$ and $Z_2$ are separable Frech\'{e}t spaces, and   $P_\omega^{**}$ is the limit (in the narrow topology) of a sequence of random Dirac measures. Pick arbitrary $\delta>0$ and arbitrary bounded random functions $f_i:\Omega\times Z_1\times Z_2 \to R$ that are continuous in $(z_1,z_2)$. This defines the neighborhood $U^*_{f_1,...,f_m,\delta}$ of the random measure $P^{**}_{(\omega, z_1)}$ in the space $P_{\Omega^{*}}(Z_2)$, see Subsection \ref{sec-extensions-bases}. Then there exists a random Dirac measure   ${\delta}_{\beta(\omega,z_1)}$, where $\beta\in \mathcal Pa(Z_2, \mathcal B^*)$, belonging to this neighborhood. By Theorem \ref{th-approx-simple-points}, the random points $\beta: \Omega^*\to Z_2$ can be assumed, without loss of generality, to be $P^{*}$-a.s. continuous in $z_1\in Z_1.$ This means that
  $$
  \left|\int\limits_{\Omega^{**}}f_idP^{**}-\int\limits_{\Omega^*}f_i(\omega, z_1, \beta(\omega, z_1))dP^*\right|<\delta, \ \ \ i=1,...,m.
  $$
  On the other hand, $P^*_\omega$ is the limit (in the narrow sense) of a sequence of random Dirac measures $\{\delta_{\alpha_n(\omega)}\}$, where $\alpha_n\in \mathcal Pa(Z_1, \mathcal B)$. As the functions $f_i(\omega, z_1, \beta(\omega, z_1))$ are $P^*$-a.s. continuous in $z_1$, Remark \ref{rem-narrow-topology} ensures that
  $$
  \lim\limits_{n\to\infty}\int\limits_\Omega f_i(\omega, \alpha_n(\omega), \beta(\omega, \alpha_n(\omega))dP=\int\limits_{\Omega^*} f_i(\omega, z_1, \beta(\omega, z_1)dP^*.
  $$ Therefore
$$
\left|\int\limits_\Omega f_i(\omega, \alpha_n(\omega), \beta(\omega, \alpha_N(\omega)))dP-\int\limits_{\Omega^{**}}f_idP^{**}\right|<2\delta
$$
for sufficiently large $n$. The random point $\gamma: \Omega\to Z_1\times Z_2$, defined as $$\gamma(\omega)=(\alpha_n(\omega), \beta(\omega, \alpha_n(\omega))),$$ is easy to see to be $\mathcal B$-adapted. By construction, it belongs to the neighborhood $U_{f_1,...,f_m,\delta}$ of the random measure $P^{**}_{\omega}$ in the space $P_{\Omega}(Z_1\times Z_2).$ Thus, $\mathcal B^{**}$ is a Young expansion of the stochastic basis $\mathcal B.$

Evidently, this construction can be iterated, so that finitely many consecutive Young expansions are all Young expansions of the original stochastic basis.
\end{example}

\begin{example}\label{ex-limit-stoch-bases}
The previous example can be extended to the case of countably many iterations. More precisely, let
  $$\mathcal B^*_\nu = (\Omega^\nu, \mathcal F^\nu, (\mathcal F^\nu_t)_{t\in T}, P^\nu), \ \ \ \nu\in N\cup \{0\}$$ be a sequence of stochastic bases, where $\mathcal B^*_0=\mathcal B$ and  $\mathcal B^*_\nu$ is a Young expansion of $\mathcal B^*_{\nu-1}$ for any $\nu\in N$. In particular,
  $$\Omega^\nu=\Omega\times \left(\prod\limits_{j=1}^\nu Z^j\right), \ \ \ \nu\in N,$$ where $Z^j\equiv Z^j_b$ are separable Frech\'{e}t spaces coming from the respective projective families $\mathcal Z^j=(Z_t^j, q^{ut}_j, T)$ ($b=\max T$).

 The direct product $Z^\infty=\prod\limits_{j=1}^{\infty} Z^j$ is a separable Frech\'{e}t space as well, and it gives rise to the stochastic basis
 $$
 \mathcal B^\infty=(\Omega^\infty, \mathcal F^\infty, (\mathcal F_t^\infty)_{t\in T}, P^\infty),
 $$
 where $\Omega^\infty=\Omega\times Z^\infty$ and $P^\infty$ is defined to be the inverse (projective) limit of the sequence $\{P^\nu\}$, while $\mathcal F^\infty$ and $\mathcal F^\infty_t$ are constructed according to the recipes from Definition \ref{def-Young-extension}.

 Let $\mathcal P^{\nu}:\ Z^\infty\to \prod\limits_{j=1}^{\nu} Z^j$ be the natural projections ($\nu \in N$), take arbitrary $\delta>0$ and random functions $f_i:\Omega\times Z^\infty \to R$ that are continuous in the second variable and bounded by 1.
 Since $Z^\infty$ is a separable metric space, the measure $P_\omega^\infty$ is a random Radon measure \cite[Th. 3.1.10, p. 3056]{Bog}, so that there exists a compact $C \subset Z^\infty$ such that $P^\infty(\Omega\times C)\ge 1-\delta$ and
 $$
 \left|\int\limits_{\Omega^\infty}f_kdP^\infty-\int\limits_{\Omega\times C}f_kdP^\infty\right|<\delta, \ k=1,...,m.
 $$
 Therefore, there exists a number $\nu$ and random functions $f_k^\nu:\Omega\times \prod\limits_{j=1}^{\nu} Z^j \to R$ that are continuous in the second variable, bounded by 1 and satisfying
 \begin{equation}\label{eq_proj_limit_1}
 P\{\sup\limits_{C} \left|f_k-(f_k^\nu\circ \mathcal P^\nu) \right|\ge\delta\}<\delta, \ k=1,...,m,
 \end{equation}
  so that
 $$
 \left|\,\int\limits_{\Omega\times C}f_kdP^\infty-\int\limits_{\Omega\times C}(f_k^\nu\circ \mathcal P^\nu)dP^\infty\right|<\delta, \ k=1,...,m.
 $$
 Hence
 $$
 \left|\,\int\limits_{\Omega^\infty}f_kdP^\infty-\int\limits_{\Omega^\infty}(f_k^nu\circ \mathcal P^\nu)dP^\infty\right|<4\delta, \ k=1,...,m,
 $$
 as $|f_k|\le 1$ and $P^\infty(\Omega^\infty-(\Omega\times C))\le \delta.$ By the definition of the inverse product of probability measures \cite{Bog},  $$\int\limits_{\Omega^\infty}(f_k^\nu\circ \mathcal P^\nu)dP^\infty=\int\limits_{\Omega^\nu}f_k^\nu dP^\nu, \ k=1,...,m.$$

 Applying the proposition from Example \ref{ex-iterated-stoch-bases} yields a random Dirac measure $\delta_{\gamma(\omega)}$, $\gamma\in \mathcal Pa(Z_\nu)$, which satisfies
 $$
 \left|\,\int\limits_{Z^\nu}f_k^\nu dP^\nu-E(f_k^\nu\circ\gamma) \right|<\delta.
 $$
 Let $\tilde\gamma (w)=(\gamma(\omega), 0)\in \prod\limits_{j=1}^\nu Z^j\times \prod\limits_{j=\nu+1}^\infty Z^j$.
 Then $\tilde\gamma\in \mathcal Pa(Z^\infty, \mathcal B)$ and
   $$|E(f_k^\nu\circ\gamma)-E(f_k\circ\tilde\gamma)|=|E(f_k^\nu\circ \mathcal P^\nu\circ\tilde\gamma)-E(f_k\circ\tilde\gamma)|<2\delta $$
   due to (\ref{eq_proj_limit_1}).

 Summarizing we obtain
 $$
 \left|\,\int\limits_{\Omega^\infty}f_kdP^\infty-E(f_k\circ\tilde\gamma)\right|<6\delta.
 $$
 As $\delta>0$ and random continuous functions $f_i:\Omega\times Z^\infty \to R$ bounded by 1 were arbitrary, we have proven that $P^\infty_\omega$ can be represented as the limit (in the narrow topology) of a sequence of random Dirac measures $\{\delta_{\tilde\gamma_n(\omega)}\}$, where $\tilde\gamma_n\in  \mathcal Pa(Z^\infty, \mathcal B)$. Thus, $\mathcal B^\infty$ is a Young expansion of the stochastic basis $\mathcal B_0=\mathcal B$ in the sense of Definition \ref{def-Young-extension}.

This construction can be generalized to the case of a countable projective family $(\mathcal B^*_\lambda, \mathcal P^{\mu\lambda}, \Lambda)$ of (partially ordered) Young expansions. The inverse limit of this family will be again a Young expansion of the stochastic basis $\mathcal B =(\Omega ,{\mathcal F},({\mathcal F}_t)_{t\in {T}}, P)$.
\end{example}

\subsection{Examples of LC extensions}\label{sec_ex_extensions}

\begin{example}\label{ex_superposition-extension}

Let $(\Omega^*,{\mathcal F}^*, P^*)$ be a probability space and $c:\ \Omega^*\to \Omega$ be a $({\mathcal F}^*,{\mathcal F})$- measurable surjection such that $P^*c^{-1}=P$. Denote by $\mathcal P^* (X)$ the set of all (equivalence classes of) random points $x: \ \Omega^*\to X$.

If $f(\cdot, \cdot): \Omega\times X\to Y$ is a Carath\'{e}odory function, then the associated (local and continuous) superposition  operator $h_f: \ \mathcal P (X)\to \mathcal P(Y)$,  $(h_fu)(\omega)=f(\omega, u(\omega))$ admits a unique LC extension $h_f^*: \ \mathcal P^* (X)\to \mathcal P^*(Y)$ given by $(h_f^*u)(\omega^*)=f(\omega, u(\omega^*))$. In other words, the extension $h_f^*$ of $h_f$ is the superposition operator generated by the same mapping $F$, but  naturally extended to a bigger probability space. In this case, the expansion does not need to be a Young expansion.

\end{example}

In the next example we need
\begin{lemma}\label{lem-martingale-property}
Let $\mathcal B^*=(\Omega^*, \mathcal F^*, \mathcal F^*_t, P^*)$ be a Young expansion of the stochastic basis $\mathcal B$ in the sense of Definition \ref{def-Young-extension} and $M(t)$, $t\in [a,b]$, is a martingale on  $\mathcal B$ (see e.g. \cite{Oks}). Then the stochastic process $M^*(t, \omega^*)=M^*(t, \omega, z)\equiv M(t, \omega))$ is a martingale on the stochastic basis $\mathcal B^*$.
\end{lemma}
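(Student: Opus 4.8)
The plan is to write $M^*=M\circ c$, so that $M^*(t,\omega,z)=M(t,\omega)$ is precisely the process in the statement, and to verify the three defining properties of a martingale on $\mathcal B^*$: adaptedness, integrability and the conditional‑expectation identity. The first two are immediate. Since $M(t)$ is $\mathcal F_t$-measurable and, by Remark \ref{rem-Young-extension}, $c$ is $(\mathcal F^*_t,\mathcal F_t)$-measurable, the composition $M^*(t)=M(t)\circ c$ is $\mathcal F^*_t$-measurable. Integrability follows from the measure‑preserving property $P^*c^{-1}=P$ of Definition \ref{def-ext-general}:
$$E^*|M^*(t)|=\int_{\Omega^*}|M(t)\circ c|\,dP^*=\int_\Omega|M(t)|\,d(P^*c^{-1})=E|M(t)|<\infty.$$

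For the martingale identity I would fix $a\le s\le t\le b$ and show $\int_A M^*(t)\,dP^*=\int_A M^*(s)\,dP^*$ for every $A\in\mathcal F^*_s$. Both sides are finite signed measures in $A$, so by a Dynkin ($\pi$–$\lambda$) argument it suffices to test on the generating $\pi$-system of product sets $A=B\times C$ with $B\in\mathcal F_s$ and $C=(q^s)^{-1}(D)$, $D\in\mbox{Bor}(Z_s)$ (the $P^*$-completion in Definition \ref{def-Young-extension} being irrelevant under integration). Using the disintegration $dP^*=dP^*_\omega\,dP$ and writing $g_s(\omega)=P^*_\omega(C)=\bigl[(q^s)_*P^*_\omega\bigr](D)$, each side collapses to an integral over $\Omega$, namely $\int_{B\times C}M^*(r)\,dP^*=\int_B M(r,\omega)\,g_s(\omega)\,dP(\omega)$ for $r\in\{s,t\}$, so the required identity becomes
$$\int_B\bigl(M(t)-M(s)\bigr)\,g_s\,dP=0.$$

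The crux — and the step I expect to demand the most care — is the $\mathcal F_s$-measurability of the weight $g_s$; once it is available the identity is immediate, since $g_sI_B$ is bounded and $\mathcal F_s$-measurable and $E[M(t)-M(s)\mid\mathcal F_s]=0$ by the martingale property of $M$ on $\mathcal B$. To obtain it I would show that $\omega\mapsto\int_{Z_s}\phi\,d\nu_\omega$ is $\mathcal F_s$-measurable for every bounded continuous $\phi:Z_s\to R$, where $\nu_\omega=(q^s)_*P^*_\omega$, after which a monotone‑class argument upgrades this to measurability of $\omega\mapsto\nu_\omega(D)=g_s(\omega)$ for all Borel $D$. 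The delicate point is that narrow convergence (Remark \ref{rem_narrow_conv}) controls only the averaged functionals $\mu\mapsto E\int f\,d\mu_\omega$, so measurability of the limit cannot be read off pointwise in $\omega$; instead I would argue in the mean. Fix such a $\phi$ and an arbitrary bounded $h$ with $E[h\mid\mathcal F_s]=0$. Since $P^*_\omega$ is the narrow limit of $\{\delta_{\alpha_n(\omega)}\}$ with $\alpha_n\in\mathcal Pa(Z,\mathcal B)$ (Definition \ref{def-Young-extension}), applying (\ref{eq-narrow-convergence}) to the bounded Carath\'eodory function $F(\omega,z)=h(\omega)\,\phi(q^s(z))$ gives
$$E\Bigl[h\int_{Z_s}\phi\,d\nu_\omega\Bigr]=E^*F=\lim_{n\to\infty}E\bigl[h(\omega)\,\phi(q^s(\alpha_n(\omega)))\bigr]=0,$$
the last equality because $q^s(\alpha_n)$ is $\mathcal F_s$-measurable by adaptedness of $\alpha_n$, so each expectation equals $E[\phi(q^s(\alpha_n))\,E[h\mid\mathcal F_s]]=0$. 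As $h$ ranges over all bounded functions with vanishing $\mathcal F_s$-conditional mean, $\omega\mapsto\int\phi\,d\nu_\omega$ must be $\mathcal F_s$-measurable, which is exactly the measurability of $g_s$ needed above. This yields the martingale identity and hence the lemma.
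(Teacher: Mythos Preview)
Your proof is correct and shares the paper's overall strategy: reduce to product test sets $B\times C$ with $B\in\mathcal F_s$, $C=(q^s)^{-1}(D)$, and use the adaptedness of the approximating $\alpha_n$ to see that the ``$z$-part'' of the test functional is $\mathcal F_s$-measurable. The technical organization differs, however. The paper restricts to $P^*$-continuity sets so that $M^*I_{B\times C}$ is $P^*$-a.s.\ continuous, then invokes narrow convergence (Remark \ref{rem-narrow-topology}) to pass to a single Dirac approximation $P^*_\omega=\delta_{\alpha(\omega)}$ and verifies the identity there directly via $\mathcal F_s$-measurability of $I_C\circ\alpha$. You instead disintegrate first, isolating the scalar weight $g_s(\omega)=P^*_\omega(C)$, and then prove $g_s$ is $\mathcal F_s$-measurable by a duality argument: narrow convergence applied to the bounded Carath\'eodory function $h(\omega)\phi(q^s(z))$ together with adaptedness of $\alpha_n$ forces $E[hG]=0$ whenever $E[h\mid\mathcal F_s]=0$. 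Your route has the advantage that narrow convergence is only invoked on genuinely bounded integrands, whereas the paper tacitly applies it to $M^*I_D$ with $M$ merely integrable; the paper's argument still works after a truncation, but you sidestep the issue entirely.
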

\begin{proof}
First of all, we notice that $M^*(t)$ is $\mathcal F_t\otimes (q^t)^{-1}\mbox{Bor}\,(Z_t)$-measurable. Indeed, for any $t\in [a,b]$ and $B\in \mbox{Bor}\, (R)$, the set $$\{M^*(t)\in B\}=\{M(t)\in B\}\times Z=\{M(t)\in B\}\times (q^t)^{-1}Z_t\in \mathcal F_t\otimes (q^t)^{-1}\mbox{Bor}\,(Z_t).$$
It remains, therefore, to prove the equality
\begin{equation}\label{eq-lemma-martingale-1}
  E^*(M^*(t)u)=E^*(M^*(s)u)
\end{equation}
for any $s$, $a\le s \le t$ and any $\mathcal F_t\otimes (q^t)^{-1}\mbox{Bor}\,(Z_t)$-measurable and bounded random variable $u: \Omega^*\to R$.
In fact, it is sufficient to check this equality for $u=I_{D}$, $A\in \mathcal D$, where $\mathcal D$ generates the $\sigma$-algebra $\mathcal F_t\otimes (q^t)^{-1}\mbox{Bor}\,(Z_t)$, in particular, for $P^*$-continuity sets of the form $D=A\times C$, where $A\in \mathcal F_s$ and $C=(q^s)^{-1}(C_0)$, $C_0\in \mbox{Bor}\,(Z_t)$. In this case, the function $M^*u$ becomes $P^*$-a.s. continuous, which gives us the opportunity to assume, without loss of generality, that $P^*$ is generated by a random Dirac measure $P^*_\omega=\delta_{\alpha(\omega)}$, $\alpha\in \mathcal Pa(Z)$, because  $P^*$ is a Young measure.

Under the above simplifications Eq. (\ref{eq-lemma-martingale-1}) becomes
\begin{equation}\label{eq-lemma-martingale-2}
\int_{\Omega}M(t,\omega)I_{A}(\omega)I_C(\alpha(\omega))dP=\int_{\Omega}M(s,\omega)I_{A}(\omega)I_C(\alpha(\omega))dP.
\end{equation}
Notice that $I_C\circ\alpha: \omega\to I_C(\alpha(\omega))$ is $\mathcal F_s$-measurable, because
$I_C\circ\alpha=I_{(q^s)^{-1}(C_0)}\circ\alpha=I_{C_0}\circ q^s\circ \alpha$ is the composition of the $\mbox{Bor}\,(Z_t)$-measurable function $I_{C_0}$ and the $\mathcal F_s$-measurable random point $q^s\circ \alpha: \Omega\to Z_s$. Thus, (\ref{eq-lemma-martingale-2}) follows from the assumption that $M(t)$ is a martingale on the stochastic basis $\mathcal B$. Therefore, the equality (\ref{eq-lemma-martingale-1}) is fulfilled as well, which means that $M^*(t)$ is a martingale on $\mathcal B^*$.
\end{proof}

\begin{example}\label{ex-stoch-integrals}
Consider the LC operator $J: \mathcal{P}a(X, \mathcal B) \to \mathcal{P}a(X, \mathcal B)$ given by
$$
(Jx)(t)=\int\limits_{a}^tx(s)dW(s),
$$
where $W(t)$ is the standard scalar Wiener process on $[a,b]$ and $X$ is either $C[a,b]$ or $L^r[a,b]$. The operator $J$ is linear and, therefore, uniformly continuous on its domain (adapted stochastic processes with square integrable trajectories). By Theorem \ref{th-extension-operators-general}, $J$ admits a unique LC extension $J^*$ for  any Young expansion $\mathcal B^*$ of $\mathcal B$.

Let us check that  $W^*(t,\omega^*)=W^*(t,\omega, z)=W(t, \omega)$ remains the standard Wiener process on $\mathcal B^*$. Indeed, $W^*$ is sample continuous and by Lemma \ref{lem-martingale-property} it is a martingale with the zero mean (which coincides with $W^*(a)=W(a)=0$), and $(W^*)^2-t$ is a martingale as well by the same lemma. Thus, we have verified L\'{e}vy's characterization of the standard scalar Wiener process.

By this, the well-defined LC operator $\int\limits_{a}^tx(s)dW^*(s)$ extends the operator $J$. Applying the uniqueness property proven in Theorem \ref{prop-unique-local-extension} yields
$$
(J^*x)(t)=\int\limits_{a}^bx(s)dW^*(s).
$$
A similar argument can be used for an arbitrary stochastic integral defined on an appropriate domain described e.g. in \cite{Pon-3}.

Let us also remark that the operator $J$ cannot be extended to arbitrary expansions of $\mathcal B$, because $W^*(t)$ has at least to be a semimartingale in order that stochastic integration is properly defined.
\end{example}

\begin{example}\label{ex-extention compositions}
Combining Examples \ref{ex_superposition-extension} and \ref{ex-stoch-integrals} we get the formula for the (unique) LC extension
$$
(h^*x)(t)=\int\limits_a^tF(s,x(s))dW^*(s)
$$
of the nonlinear integral operator
$$
(hx)(t)=\int\limits_a^tF(s,x(s))dW(s),
$$
which is valid for any Young expansion of the underlying stochastic basis and a function $F: \ \Omega\times [a,b]\times R^n\to R^n$ satisfying the conditions from Example \ref{ex-local-comp}. The function $F$ can be again replaced by a Volterra operator described in the latter example.
%
%
%
\end{example}
\subsection{Weak solutions of stochastic equations}\label{sec_ex_weak_sol}
\begin{example}\label{ex-weak-sol-1}
  Consider the initial value problem for an ordinary stochastic differential equation with random coefficients
  \begin{equation}\label{eq_init_value_problem-0}
    dx(t)=f^0(t,x(t))dt+\sum\limits_{j=1}^mf^j(t,x(t))dW_j(t) \ (t\in [a,b]) \ \ \ \mbox{and} \ \ \ x(a)=x_0,
  \end{equation}
   where $f^j$ satisfies the conditions that are similar to those listed in Example  \ref{ex-local-comp}:
\begin{enumerate}
  \item $f^j(\cdot, \cdot, x)$ is $\mathcal F\otimes \mbox{Bor}([a,b])$-measurable for all $x\in R^n, j=0,..,m$;
    \item $f^j(\cdot, t, x)$ is $\mathcal F_t$-adapted for any $t\in [a,b]$ and $x\in R^n, j=0,..,m$;
  \item $f^j(\omega, t, \cdot)$ (j=0,..,m) is continuous for $P\otimes\mu$-almost all $(\omega, t)\in \Omega\times [a,b]$, where $\mu$ is the Lebesgue measure on $[a,b]$.
  \item $|f^j(\omega, t, x)|\le C_j(\omega, t)$ $P\otimes\mu$-almost everywhere, where $C_0(\omega, \cdot)\in L^{r_0}[a,b]$ a.s. and $C_0(\omega, \cdot)\in L^{2r_j}[a,b]$ a.s. ($j=1,...,m$) for some $r_j>1$ ($j=0,...,m$).
\end{enumerate}
and $W_j$ are standard scalar Wiener processes (not necessarily independent) on the stochastic basis (\ref{eq-stoch-basis}).

The claims are that under the above assumptions on $f^j$ the initial value problem (\ref{eq_init_value_problem-0}) has at least one weak solution $x$ on the interval $[a,b]$ for any $\mathcal F_a$-measurable random point $x_0$ and that this solution has continuous paths on $[a,b]$. If it is a priori known that the problem (\ref{eq_init_value_problem-0}) has at most one weak solution for any Young expansion of the stochastic basis (\ref{eq-stoch-basis}), then $x$ is, in fact, strong, i.e. it is defined on the stochastic basis (\ref{eq-stoch-basis}) for all $t\in[a,b].$

To prove these claims let us consider the operator
\begin{equation}\label{eq-operator-h-ex-1}
(hx)(t)=x_0+\int_{a}^{t}f^0(s,x(s))ds+\sum\limits_{j=1}^m\int_{a}^{t}f^j(s,x(s))dW_j(s)
\end{equation}
in the space $\mathcal Pa(X)$, where $X=C[a,b]$, and check the assumptions of Corollary \ref{cor-fixed-point-general} (or Theorem \ref{th-fixed-point}, a particular case of this corollary).

Using the information from the examples of this section we obtain that
\begin{itemize}
  \item the corresponding projective system, generated by the space $X=C[a,b]$ satisfies {Property ($\Pi$)}, see Example \ref{ex-Pi-for-C}.
\item The integral superposition operator $(I_0x)(t)\equiv \int_{a}^{t}f^0(s,x(s))ds$ is an LC operator in the space $\mathcal P(X)$ to $\mathcal P(X)$, see Example \ref{ex_Carath}; as $f^0(\cdot, \cdot, x)$ is adapted for each $x\in R^n$, then $I_0$ maps $\mathcal Pa(X)$ into itself;
  \item the superposition operator $I_0: \mathcal Pa(X)\to \mathcal Pa(X)$ is tight-range, as the integral operator generating $I_0$ is compact-range in the space $C[a,b]$ due to assumption (4), see Example \ref{ex-Carat-tight};
  \item the integral operators $(I_jx)(t)\equiv\int_{a}^{t}f^j(s,x(s))dW_j(s)$ are LC operators in the space $\mathcal Pa(X)$ to $\mathcal Pa(X)$, see Examples \ref{ex-local-comp};
    \item the operators $I_j: \mathcal Pa(X)\to \mathcal Pa(X)$ are tight-range, because $I_j(\mathcal Pa(X))=I_j(\mathcal A)$, where the set $$\mathcal A=\{x\in\mathcal Pa(X): \ \|x(\omega)\|_{L^{2r_j}}\le \|C_j(\omega)\| \ \mbox{a.s.}\}$$ is bounded in the space $\mathcal Pa(L^{2r_j})$ and $r_j>1$, see Example \ref{ex-integral-tight};
  \item the operator $h: \mathcal Pa(X)\to \mathcal Pa(X)$ is a local and tight-range operator as a sum of such operators, see Example \ref{ex-linear-comb-tight}.
  \end{itemize}
Therefore, the operator $h$ has at least one weak fixed point $x^*$ in the space $\mathcal Pa(X)=\mathcal Pa(C[a,b])$. This fixed point will be a weak, path-continuous solution of the initial value problem (\ref{eq_init_value_problem-0}) on the interval $[a,b]$. If, in addition, this initial value problem is known to have at most one local solution on the interval $[a,b]$ for any Young expansion of the stochastic basis (\ref{eq-stoch-basis}), then the operator $h$ has at most one weak fixed point in the space $\mathcal Pa(X)$. Applying Corollary \ref{cor-fixed-point-general}, we get a unique strong solution of the problem (\ref{eq_init_value_problem-0}) on the interval $[a,b]$.
\end{example}

If the right-hand sides of the equation in (\ref{eq_init_value_problem-0}) are not bounded, then the solutions of it may not be defined on the entire interval $[a,b]$. In this case, we will need a notion of a local solution, i.e. a solution defined on some random subinterval. Such local solutions may be than extended either to the interval $[a,b]$, or they explode within a finite random interval. The proof is based on the iterated application of the fixed-point principle and, therefore, on an infinitely repeated Young expansions of the original stochastic basis, as it is constructed in Example \ref{ex-limit-stoch-bases}. Below we illustrate this procedure by using more general stochastic equations and the space $L^1[a,b]$ instead of $C[a,b]$, which allows to relax assumptions on the right-hand sides (because the tightness conditions are weaker in $L^1[a,b]$, see Example \ref{ex-integral-tight}).

\begin{example}\label{ex-weak-sol-2}
  Consider the initial value problem
  \begin{equation}\label{eq_init_value_problem}
    dx(t)=(V^0x)(t)dt+\sum\limits_{j=1}^m(V^j)(tx)dW_j(t) \ (t\in [a,b]) \ \ \ \mbox{and} \ \ \ x(a)=x_0,
  \end{equation}
  where $W_j$ are the same as in the previous example, $V^j$ are the superposition operators generated by random, continuous Volterra operators $V_\omega^j: \ \mathcal Pa(X)\to\mathcal Pa(Y^j)$ ($j=0,...,m$), which  satisfy the measurability conditions with respect to the filtration $(\mathcal F_t)$ from Example \ref{ex-local-comp}, and
  $X=C[a,b]$, $Y^0=L^{1}[a,b]$ and $Y^j=L^2[a,b]$ ($j=1.,,,.m$.)

  Then we have the following statements:
    \begin{enumerate}
    \item  the initial value problem (\ref{eq_init_value_problem}) has at least one weak local, path-continuous solution  for any $\mathcal F_{a}$-measurable random point $x_0$ in $R^n$, i. e. a solution defined on some Young expansion of the stochastic basis $\mathcal B$ and some random subinterval;
    \item if the absolute values of all weak local solutions of (\ref{eq_init_value_problem}) are known to be bounded in probability, then these solutions are defined for all $a\le t\le b$, i.e. $\tau=b$ a. s.;
     \item if for any Young expansion $\mathcal B^*$ of the stochastic basis $\mathcal B$ the initial value problem (\ref{eq_init_value_problem}) has at most one weak solution on $[a,b]$, then any such a is strong, i.e. defined on the original stochastic basis $\mathcal B$ for all $t\in [a,b]$.
  \end{enumerate}

  The proof of statements (1)-(3) is based again on Corollary \ref{cor-fixed-point-general} (or Theorem \ref{th-fixed-point}, a particular case of this corollary). Let us start with the first statement. To define a tight-range LC operator in the space $\mathcal Pa(L^1[a,b])$ we first define the random, $\mathcal F_a$-measurable in $\omega$ and continuous projections $\kappa_\omega^1$ of the space $R^n$ onto the ball $B_1$ of radius $1$ centered at $x_0(\omega)$ and define $V_\omega^{j,1}x=V_\omega^j(x(\omega)\circ\kappa_\omega^1)$. By construction, the operator $V_\omega^{j,1}$ is random continuous Volterra operator acting from $\mathcal Pa(L^1[a,b])$ to $\mathcal Pa(L^1[a,b])$ ($j=0$) and $\mathcal Pa(L^2[a,b])$ ($j=1,..,m$), respectively, and satisfying the same measurability conditions with respect to the filtration $\mathcal F_t$ as the operators  $V_\omega^j$. Defining $h^1$ by
  $$
  (h^1x)(t)=x_0+\int_{a}^{t}({V^{0,1}}x)(s)ds+\sum\limits_{j=1}^m\int_{a}^{t}({V^{j,1}}x)(s)dW_j(s)
  $$
  and using the tightness property of the It\^{o} integral from Example \ref{ex-integral-tight}, the compactness of the Lebesgue integral as an operator in $L^1[a,b]$, together with Example \ref{ex-Carat-tight}, we see that $h^1$ is a tight LC operator in the space $\mathcal Pa(L^1[a,b])$. Moreover, it is tight-range,  as it maps the space $\mathcal Pa(L^1[a,b])$ onto the set $h^1(\mathcal A)$, where $\mathcal A=\{x\in \mathcal Pa(L^1[a,b]): |x(\omega,t)-x_0(\omega)|\le 1 \ \mbox{a.s.}\}$, which is bounded in the space $\mathcal Pa(L^1[a,b])$.
  By Theorem \ref{th-fixed-point}, there exists a Young expansion
  $
  \mathcal B^1=(\Omega^1, \mathcal F^1, \mathcal F_t^1, P^1)
  $
  of the stochastic basis $\mathcal B,$ where $\Omega^1=\Omega\times Z$ and $Z=L^1[a,b]$ and a weak fixed point $x_1\in \mathcal Pa(L^1[a,b], \mathcal B^1)$ of the operator $h^1$. Notice that $|x_1-x_0|\le 1$ $P^1$-a.s. by construction and that  this solution, in fact, has continuous trajectories. Hence the stopping time $\tau_1(t)=\inf\{t: |x_1(t)-x_0|>1\}$ is well-defined and $\tau_1>a$ a.s., so that the restriction of $x^1$ to the random interval $[a,\tau_1]$ solves the initial value problem  (\ref{eq_init_value_problem}) on this interval.  For the sake of simplicity, we may still denote this solution by $x_1$.
 This proves the first part of the theorem.

  To prove the second statement, we iterate the above procedure by induction. If $\nu\ge 2$ and $x_{\nu-1}$ is an already constructed weak solution defined on a Young expansion
  $$
  \mathcal B^{\nu-1}=(\Omega^{\nu-1}, \mathcal F^{\nu-1}, \mathcal F_t^{\nu-1}, P^{\nu-1})
  $$
  for all $t\in [a, \tau_{\nu-1}]$ and satisfying $|x_{\nu-1}-x_0|\le \nu-1$ $P^{\nu-1}$-a.s. Here $\tau_{\nu-1}$ is some stopping time on $\mathcal B^{\nu-1}$ and $\Omega^{\nu-1}$ is the direct product of $\Omega$ and ${\nu-1}$ copies of the space $Z=L^1[a,b].$ Put
  $$
  \tilde x(t)=\left\{
\begin{array}{ll}
  x(t) & (t\ge \tau_{\nu-1}) \\
  x_{\nu-1}(t) & (t<\tau_{\nu-1})
\end{array}
\right.
  $$
  and
  define the LC operator
    $$
  (h^\nu x)(t)=x_0+\int_{a}^{t}({V^{0,\nu}}x)(s)ds+\sum\limits_{j=1}^m\int_{a}^{t}({V^{j,\nu}}x)(s)dW^{\nu-1}_j(s),
  $$
  where $W_j^{\nu-1}$ are the standard Wiener processes on $\mathcal B^{\nu-1}$ and
  $V_\omega^{j,\nu}x$ are random continuous Volterra operators given by
  $$
  (V_\omega^{j,\nu}x)(t)=\left\{
\begin{array}{ll}
  (V_\omega^j(\tilde x(\omega)\circ\kappa_\omega^\nu))(t) & (t\ge \tau_{\nu-1}) \\
  (V_\omega^j(x_{\nu-1}(\omega)))(t)  & (t<\tau_{\nu-1})
\end{array}
\right.
  $$
  the random continuous projections $\kappa_\omega^\nu$ of the space $R^n$ onto the ball $B_\nu$ of radius $\nu$ centered at $x_0(\omega)$. By construction, the operators $V_\omega^{j,\nu}$ satisfy the same measurability conditions with respect to the filtration $(\mathcal F_t^{\nu-1})$ as the operators  $V_\omega^j$ do for the filtration $(\mathcal F_t)$. Therefore, $V^{0,\nu}: \mathcal Pa(L^1[a,b], \mathcal B^{\nu-1})\to \mathcal Pa(L^1[a,b], \mathcal B^{\nu-1})$ and $V^{j,\nu}: \mathcal Pa(L^1[a,b], \mathcal B^{\nu-1})\to\mathcal Pa(L^2[a,b], \mathcal B^{\nu-1})$ ($j=1,..,m$).

  The LC operator $h^\nu$ is tight-range exactly by the same reasons as the operator $h^1$, so that it has a weak fixed point $x_\nu$ defined on a Young expansion $\mathcal B^{\nu}$ of the stochastic basis $\mathcal B^{\nu-1}$. As before,  $x_\nu$ has continuous paths, so that $\tau_\nu=\inf\{|x_\nu-x_0|>\nu\}$ is well-defined and satisfies $\tau_\nu>\tau_{\nu-1}$ a.s. Therefore, it gives rise to a local solution defined on $\mathcal B^{\nu}$ for all $t\in [t_0, \tau_\nu]$. We denote this solution by $x_\nu$ as well. By construction it a.s. coincides with $x_{\nu-1}$ on the random interval $[a, \tau_{\nu-1}]$ and satisfies $|x_\nu-x_0|\le \nu$ $P^\nu$-a.s. The induction argument is completed.

  By letting $\nu\to\infty$ we obtain the stochastic basis
  $$
  \mathcal B^*=(\Omega^*, \mathcal F^*, \mathcal F^*_t, P^*)
  $$
  to be the limit of the sequence of the stochastic bases $\mathcal B^\nu$ in the sense of Example \ref{ex-limit-stoch-bases}. Clearly,  all $\tau_\nu$ remain stopping times on $\mathcal B^*$. Let us, therefore, put  $x^*(t)=x_\nu(t)$ if
    $t\in (\tau_{\nu-1}, \tau_\nu]$, $\nu \in N$. Evidently, this stochastic process is sample continuous on the random interval $[t_0, \tau)$, where $\tau=\sup\limits_{\nu\in N}\tau_\nu$ is a stopping time on $\mathcal B^*$ and satisfies the initial value problem (\ref{eq_init_value_problem}), where $W_j$ are replaced by the standard Wiener processes $W_j^*$ on $\mathcal B^*.$
        Moreover, by construction $\{\tau= b \}$ if and only if $|x^*|<\infty$ $P^*$-a.s. This means that if the $\sup$-norm of all local weak solutions of the problem (\ref{eq_init_value_problem}) on the interval $[a,b]$ is a priori known to be bounded in probability, then $x^*(t)$ is a.s. defined for all $a\le t \le b$.

  Finally, we prove the third statement. To this end, let us assume that the problem (\ref{eq_init_value_problem}) admits at most one weak solution on the interval $[a,b]$ for any Young expansion $\mathcal B^*$ of the stochastic basis $\mathcal B$. This means that the integral operator
    $$
(hx)(t)=x_0+\int_{a}^{t}({V^{0}}x)(s)ds+\sum\limits_{j=1}^m\int_{a}^{t}({V^{j}}x)(s)dW_j(s)
    $$
    which is local and uniformly continuous on tight subsets of the space $\mathcal Pa(L^1[a,b])$, has at most one weak fixed point in this space for  any acceptable expansion $\mathcal B^*$ of the stochastic basis $\mathcal B$ as well, then by Theorem  \ref{th-fixed-point} any weak fixed point of this operator must be strong. This fixed point will be a strong solution of the initial value problem (\ref{eq_init_value_problem}) defined for all $a\le t \le b$.
\end{example}

%
%
%
%
%
%
%

\subsection{Counterexamples}\label{sec_counterex}

\begin{example}
  There exists a complete probability space $S$, a closed, convex, bounded and nonempty subset $\Xi$ of the space $\mathcal P(R^2, \mathcal S)$ and an LC operator $h: \ \Xi \to\Xi$ such that the equation $hx=x$ has no solutions in $\Xi$. This explains why we need additional assumptions on the invariant subset $\Xi$ in the finite dimensional fixed-point theorem \ref{th-finite-dim}. For the proof of this result see  \cite{Pon-2}.
\end{example}

\begin{example}
There exists a tight-range LC operator $h: \mathcal Pa(C[a,b])\to \mathcal Pa(C[a,b])$ with no strong fixed points in the space $\mathcal Pa(C[a,b])$. This justifies the notion of a weak solution, which always exists in this case (see Corollary \ref{cor-fixed-point-general}). The existence of such $h$ follows from the results of the paper \cite{Barlow}, where a stochastic ordinary differential equation with non-Lipschiz yet continuous coefficients and no strong solutions is constructed.
\end{example}

\end{document}